\newlength\FHoffset
\newlength\FHleft
\newlength\FHright
\renewcommand{\headrulewidth}{1.0pt} 
\newbox\FHline
\title{\textbf{
Sharp Nonuniqueness in the Transport Equation with Sobolev Velocity Field
}
}
\author[]{\large \textbf{Elia Bru\`e}\footnote{Department of Decision Sciences, Universit\`a Bocconi, Milano, Italy. \textit{Email:}  \href{mailto:elia.brue@unibocconi.it}{elia.brue@unibocconi.it}. } \;
\textbf{\&} \;
\textbf{Maria Colombo}\footnote{EPFL, Station 8,
CH-1015 Lausanne,
Switzerland. \textit{Email:}  \href{mailto:maria.colombo@epfl.ch}{maria.colombo@epfl.ch}. } \; 
\textbf{\&} \; 
\textbf{Anuj Kumar}\footnote{Department of Mathematics, University of California Berkeley, CA 94720, USA. \textit{Email:}  \href{mailto:anujkumar@berkeley.edu}{anujkumar@berkeley.edu}. }}
\date{}
\newtheoremstyle{mystyle}
  {}
  {}
  {\itshape}
  {}
  {\bfseries}
  {.}
  { }
  {\thmname{#1}\thmnumber{ #2}\thmnote{ (#3)}}
\theoremstyle{mystyle}
\newtheorem{theorem}{Theorem}[section]
\newtheorem{proposition}[theorem]{Proposition}
\theoremstyle{definition}
\newtheorem{remark}{Remark}[section]
\newcommand\norm[1]{\left\lVert#1\right\rVert}
\newcommand{\bs}[1]{\boldsymbol{#1}}
\newcommand{\wt}[1]{\widetilde{#1}}
\newcommand{\ol}[1]{\overline{#1}}
\newcommand{\R}{\mathbb{R}}
\newcommand{\N}{\mathbb{N}}
\newcommand\reallywidecheck[1]{%
\savestack{\tmpbox}{\stretchto{%
  \scaleto{%
    \scalerel*[\widthof{\ensuremath{#1}}]{\kern-.6pt\bigwedge\kern-.6pt}%
    {\rule[-\textheight/2]{1ex}{\textheight}}
  }{\textheight}%
}{0.5ex}}%
\stackon[1pt]{#1}{\scalebox{-1}{\tmpbox}}%
}
\DeclareMathOperator\supp{supp}
\def\XXint#1#2#3{{\setbox0=\hbox{$#1{#2#3}{\int}$ }
\vcenter{\hbox{$#2#3$ }}\kern-.6\wd0}}
\numberwithin{equation}{section}
\begin{document}

\maketitle

\begin{abstract}
Given a divergence-free vector field ${\bf u} \in L^\infty_t W^{1,p}_x(\mathbb R^d)$ and a nonnegative initial datum $\rho_0 \in L^r$, the celebrated DiPerna--Lions theory established the uniqueness of the weak solution in the class of $L^\infty_t L^r_x$ densities for $\frac{1}{p} + \frac{1}{r} \leq 1$. This range was later improved in \cite{BrueColomboDeLellis21} to $\frac{1}{p} + \frac{d-1}{dr} \leq 1$.
We prove that this range is sharp by providing a counterexample to uniqueness when $\frac{1}{p} + \frac{d-1}{dr} > 1$.

To this end, we introduce a novel flow mechanism. It is not based on convex integration, which has provided a non-optimal result in this context, nor on purely self-similar techniques, but shares features of both, such as a local (discrete) self similar nature and an intermittent space-frequency localization. 

\end{abstract}









\section{Introduction}
\label{sec: intro}

This paper addresses the question of the uniqueness of the Cauchy problem associated with the {\it incompressible transport equation}:

\begin{equation}\label{eqn:CE}
\begin{cases}
    \partial_t \rho + {\bs u} \cdot \nabla \rho 
    = 0
    \\
    {\rm div} (\bs{u}) = 0
    \\
    \rho(0,x) = \rho_0(x) \, , \quad x \in \R^d \, .
\end{cases}\tag{TE}
\end{equation} 
Given an incompressible velocity field ${\bs u} \in L^1_t W^{1,p}_x$ and an initial datum $\rho_0 \in L^r$
, we consider 
solutions $\rho \in L^\infty_t L^r_x$ to \eqref{eqn:CE}, for integrability exponents $p, r \ge 1$.

\bigskip

The study of transport equation and ODEs with Sobolev velocity fields has a long history, beginning with the pioneering works of Di Perna and Lions \cite{DiPernaLions} and Ambrosio \cite{Ambrosio04},  
with applications to several PDE models of fluid dynamics and the theory of conservation laws (see the review \cite{AmbrCr}).

\bigskip

The problem of well-posedness of \eqref{eqn:CE} was first addressed by Di Perna and Lions in 1989. In their groundbreaking work \cite{DiPernaLions}, they demonstrated that the Cauchy problem \eqref{eqn:CE} has a unique solution $\rho \in L^\infty_t L^r_x$ within the following range of exponents:

\begin{equation}
\label{DPL}
\frac{1}{p} + \frac{1}{r} \leq 1 \, .
\end{equation}
Moreover, in this regime, solutions are {\it renormalized} and {\it Lagrangian}, i.e., they can be represented using the {\it Regular Lagrangian Flow} of ${\bs u}$. The latter is a generalized notion of flow suitable for the Sobolev framework. 
Similar conclusions can be obtained beyond the DiPerna Lions framework, under suitable structural assumptions, such as nearly incompressible \cite{BiaBo} or 2-dimensional autonomous vector fields \cite{ABC}.

\bigskip

On the negative side, the first counterexample to the uniqueness of solutions to \eqref{eqn:CE} has been constructed using the {\it method of convex integration}. The fundamental contributions \cite{MoSz2019AnnPDE,MoSa2019} have shown that nonuniqueness holds in the range of exponents:

\begin{equation}
\label{CI}
\frac{1}{p} + \frac{1}{r} > 1 + \frac{1}{d} \, ,
\end{equation}
with an incompressible vector field enjoying the additional integrability ${\bs u}\in L^\infty_tL^{\frac r{r-1}}_x$.

\bigskip

There is a gap between the well-posedness range \eqref{DPL} established by Di Perna and Lions and the nonuniqueness range \eqref{CI} obtained through convex integration.
The goal of this paper is  to 
determine the {\it sharp range of uniqueness} for {nonnegative solutions of the }transport equation.

\begin{theorem}
Let $d \geq 2$, $r \in [1, \infty)$ and $p \in [1, \infty)$ be such that
\begin{align}\label{cond:ip}
\frac{1}{p} + \frac{d-1}{dr} > 1\, .
\end{align}
Then there exists a compactly supported vector field $\bs{u} \in C([0, 1]; W^{1, p}\cap L^{\frac{r}{r-1}}(\mathbb{R}^d; \mathbb{R}^d))$ such that \eqref{eqn:CE} admits two distinct nonnegative solutions in the class $C([0, 1]; L^r(\mathbb{R}^d))$. 
\label{Intro: main thm}
\end{theorem}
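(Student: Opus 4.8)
The plan is to construct the two solutions as limits of an iteration scheme, building the velocity field and density together as superpositions of highly anisotropic building blocks that are transported along a pre-designed ``skeleton'' flow. Concretely, I would fix a cascade of scales $\lambda_q \to \infty$ (say $\lambda_q = \lambda_0^{b^q}$ for some $b>1$) and, at each stage $q$, add to $\bs u_q$ a perturbation $\bs w_{q+1}$ supported on a sparse collection of thin tubes, together with a density perturbation $\vartheta_{q+1}$ that is a bump of mass $\sim 1$ concentrated near a point and which, under the flow of $\bs u_{q+1}$, gets split/moved so as to realize a genuine branching. The intermittency is encoded by a pair of parameters controlling the longitudinal scale $\lambda_q^{-1}$ and the transversal scale $\lambda_q^{-\mu}$ of the tubes (with $\mu \ge 1$), and by a ``density'' parameter counting how many disjoint tubes are active; matching the Sobolev norm of $\bs u$ in $W^{1,p}$, the auxiliary bound $L^{r/(r-1)}$, and the $L^r$ norm of $\rho$ against the transport identity is exactly where the threshold $\frac1p + \frac{d-1}{dr} = 1$ appears. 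The key dimensional computation is: $\|\nabla \bs w_{q+1}\|_{L^p} \sim (\#\text{tubes})^{1/p}\cdot \lambda_q \cdot (\text{tube volume})^{1/p}$ and $\|\vartheta_{q+1}\|_{L^r} \sim (\text{bump amplitude})\cdot(\text{bump volume})^{1/r}$, and one checks that the product of the relevant exponents closes — i.e. all series $\sum_q \|\bs w_{q+1}\|_{W^{1,p}}$, $\sum_q \|\bs w_{q+1}\|_{L^{r/(r-1)}}$ converge and the density iterates stay Cauchy in $C_t L^r$ — precisely when \eqref{cond:ip} holds, with the factor $\tfrac{d-1}{dr}$ coming from the codimension-one nature of the transversal concentration (the tube cross-section lives in $\mathbb R^{d-1}$).

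The heart of the argument is the \emph{flow mechanism}: I would design an explicit autonomous (or piecewise-autonomous) model vector field on a single cell — think of a shear-type or a rotational field that, in logarithmic (self-similar) time, carries a given bump to two well-separated bumps of half the mass each, or alternatively keeps the bump fixed, thereby producing two inequivalent evolutions at the next scale. Rescaling this model to the cell size $\lambda_q^{-1}$ and speeding up time by $\lambda_q$ (discrete self-similarity) yields, after a geometric amount of time, the branching; accumulating infinitely many such branchings over the interval $[0,1]$ — with the $q$-th branching happening in a time window of length $\sim 2^{-q}$ — produces at the limit two distinct $L^r$ solutions with the same initial datum $\rho_0$. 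Nonnegativity is automatic since every building block $\vartheta_{q+1}$ is a nonnegative bump and the flow is measure-preserving.

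The main obstacle — and the place where I expect most of the work to go — is the \emph{compatibility of the two localizations}: the velocity perturbation must be supported exactly where the density bump sits and must move it in the prescribed way without disturbing the lower-frequency structure already built, while simultaneously being divergence-free, compactly supported, and of controlled $W^{1,p}\cap L^{r/(r-1)}$ size. In a pure convex-integration scheme one would enforce the equation only approximately and absorb errors; here, because we want an \emph{exact} solution and the sharp exponent, the perturbation at stage $q+1$ has to exactly transport $\vartheta_q$ over its support and the error it creates elsewhere must be reabsorbed into the definition of $\vartheta_{q+1}$, which forces a delicate gluing between adjacent cells and a careful estimate of the ``bad'' region where cells overlap or where the tube geometry bends. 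Establishing the convergence of $\bs u_q \to \bs u$ in $C_t W^{1,p}_x$ and of $\rho_q \to \rho$ in $C_t L^r_x$ — i.e. that the telescoping sums of perturbation norms converge — is then a bookkeeping exercise once the scale parameters $(\lambda_q, \mu, \#\text{tubes})$ have been tuned against \eqref{cond:ip}; verifying that the two limiting densities are genuinely distinct (and that both solve \eqref{eqn:CE} in the weak sense in the limit, using that each $\rho_q$ solves it exactly against $\bs u_q$ plus a vanishing error) completes the proof.
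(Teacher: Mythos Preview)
Your outline describes what is essentially a space-intermittent convex integration scheme (tubes with anisotropic scales, telescoping perturbations $\bs w_{q+1}$, $\vartheta_{q+1}$, bookkeeping of norms), augmented with a self-similar branching picture. Both pieces are individually well understood for this problem, and neither reaches the sharp threshold. The paper states explicitly (see \eqref{CI} and the surrounding discussion) that the convex integration mechanism stops at $\tfrac1p+\tfrac1r>1+\tfrac1d$; the purely self-similar ``branching in time windows $\sim 2^{-q}$'' idea you sketch is exactly what Section~\ref{sec: p=1} carries out, and as shown there it produces a density that is bounded in $L^\infty_tL^1_x$ but blows up in $L^\infty_tL^r_x$ for every $r>1$ (Figure~\ref{fig: plot Lr norm density L1 consturction}). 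Your heuristic that $\tfrac{d-1}{dr}$ drops out of a codimension-one tube cross-section is not backed by an actual scaling computation, and I do not believe it survives once you try to write it down: in the tube framework the velocity and density are supported on the \emph{same} set, which forces exactly the $\tfrac1p+\tfrac1r$ coupling of the Di Perna--Lions range.

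The idea you are missing is what the paper calls \emph{asynchronous translation of cubes}. The density at time $t$ is supported on cubes of many different scales simultaneously, but the velocity acts on only a \emph{bounded} number of them --- $2^{\eta d}$, independent of the generation --- at any given instant, moving them fast (speed $\sim 2^{\eta d i}$ at generation $i$) so that the total time budget is preserved. This decouples the supports of $\rho$ and $\bs v$: the $L^r$ norm of $\rho$ sees all scales, while the $W^{1,p}$ norm of $\bs v$ sees only one, and the heuristic computation in Section~3.2 of the paper shows this is precisely what produces $\tfrac1p+\tfrac{d-1}{dr}>1$. Implementing this rigorously does not fit an additive iteration $\bs u_{q+1}=\bs u_q+\bs w_{q+1}$; instead the paper sets up a Banach fixed point on a weighted $\ell^\infty$ space of \emph{sequences} of vector fields, where the $i$-th field is built from rescaled copies of the first and of the $(i{+}1)$-st. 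Your proposal would need to be substantially reorganized around this mechanism to have a chance of closing at the sharp exponent.
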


\begin{remark}[Sharpness of Theorem \ref{Intro: main thm}] It was shown in \cite[Theorem 1.5]{BrueColomboDeLellis21} that, for $p,r\in [1,+\infty]$ satisfying condition \eqref{cond:ip} with the opposite inequality, namely
	\begin{equation}\label{eq:wellposednessrange}
	\frac{1}{p}+\frac{1}{r}<1+\frac{1}{d-1}\frac{r-1}{r},
	\end{equation}
and any vector field {$\bs{u}\in L^1([0,T],W^{1,r}(\mathbb{R}^d,\mathbb{R}^d))$} with bounded space divergence, solutions of \eqref{eqn:CE} are unique  among all nonnegative, weakly continuous in time densities $\rho\in L^{\infty}([0,T],L^p(\mathbb{R}^d))$ (resp., for $p=1$,  nonnegative weakly-star continuous densities $\rho\in L^{\infty}([0,T],\mathscr{M}(\mathbb{R}^d
	))$ with $\rho(0,\cdot)=\rho_0 \mathcal L ^d$).
\label{Intro: cited thm}
\end{remark}

 



\bigskip

Theorem \ref{Intro: main thm} definitively settles the well-posedness question within the Di Perna Lions class. However, we believe that the theoretical significance of this paper extends beyond the realm of linear transport theory. To our knowledge, this marks one of the rare instances in fluid dynamics where the uniqueness class has been precisely determined (compare with Remark \ref{rmk:Nonuniq}). Standard techniques, such as the Di Perna Lions commutator estimate and the convex integration approach, reveal limitations that do not allow reaching the sharp exponent.
Our 
new approach, crafted to overcome these challenges, reveals robust features with the potential for applications in other nonlinear problems.

To discuss more in detail the main ideas behind the proof of Theorem \ref{Intro: main thm} and provide some mathematical context, we briefly describe two techniques that have proven to be powerful in demonstrating nonuniqueness in nonlinear models such as the Euler and Navier-Stokes equations. Although our approach does not fit into either of these two techniques, it shares features with both.

\subsection{Convex integration}

This technique was introduced to fluid dynamics equations in the context of the {\it Onsager conjecture}, which has been completely proven in a remarkable sequence of results, including \cite{DeLellisSzekelyhidi09,DeLellisSzekelyhidi13,Isett2018Annals,BDLISZ15}. The convex integration technique, a nonlinear method, operates on the principle that the interaction of high-frequency functions can produce low-frequency terms, that can be used to correct error terms. This high-frequency interaction occurs through the perturbation of approximate solutions with suitably defined high-frequency perturbations.

\bigskip

Due to its flexibility, convex integration has found successful applications in various fluid dynamics models. Noteworthy is the groundbreaking work 
\cite{BuckmasterVicolAnnals}, introducing the concept of {\it spatial intermittency}. For a comprehensive account of applications of the convex integration technique, we refer the reader to \cite{DLSzsurvey,BVsurvey}, and references therein.

\bigskip

In the context of the linear continuity equation, the initial convex integration approaches were developed in \cite{MoSz2019AnnPDE,MoSa2019,MoSz2019CalcVar}, ultimately demonstrating the nonuniqueness of $L^\infty_t L^r_t$ solutions within the range \eqref{CI}. In this framework, the applicability of convex integration relies on the interaction between the velocity field ${\bs u}$ and the density $\rho$. Thus, the continuity equation is regarded as nonlinear in the pair $(\bs{u},\rho)$.

In the subsequent work \cite{BrueColomboDeLellis21}, convex integration has been used to show that there are vector fields in $\bs{u}\in W^{1,p}$, $p<d$ such that the trajectories
of the corresponding ODE are nonunique on a set of positive measure of initial data. This has been
later proved with different techniques in \cite{kumar2023nonuniqueness}.

In a slightly different direction, the temporal intermittency approach of \cite{MR4199851,MR4462623} (see \cite{MR4462623} for the Navier-Stokes equations) shows, in a context which is not directly comparable with the Di Perna-Lions theory due to low time integrability, that for vector fields in $L_t^1W_{\bs{x}}^{1,p}$ there are nonunique $L_t^q C_{\bs{x}}^k$ solutions for any $q<\infty$ and $k\in \N$.


\begin{remark}[Nonuniqueness vs Energy Conservation]\label{rmk:Nonuniq}

In the classical context of the Euler equations, the question of determining the regularity thresholds that dictate uniqueness remains open. Thanks to contributions on the Onsager conjecture, it is now known that the threshold for energy conservation in Hölder spaces is $\alpha_{EC} = 1/3$. In other words, $\alpha$-Hölder continuous solutions for $\alpha > \alpha_{EC}$ conserve energy, while for any $\alpha < \alpha_{EC}$ there exist dissipative solutions.
On the contrary, determining the value of the uniqueness threshold $\alpha_U$ remains a significant open problem (see the review \cite[Section 8, Problem 6]{MR4073888}). Currently, it is estimated that $1/3 \leq \alpha_U \leq 1$.
\end{remark}

\subsection{Instability of self-similar solutions and nonuniqueness}
\label{subsec:self-sim}

The study of self-similar solutions, namely families of exact solutions invariant under the intrinsic scaling invariance of the problem, is at the basis of our understanding of many nonlinear PDEs.
In the context of uniqueness problems, the {\it spectral stability} of self-similar solutions in {\it similarity variables} plays a central role. It is well-understood that unstable modes might generate unstable nonlinear manifolds leading to nonuniqueness of the Cauchy problem. 

\bigskip

{
In the framework of Leray solutions to the Navier-Stokes equations, this nonuniqueness program has been put forth by Jia Sverak and Guillod ~\cite{jiasverakselfsim,jiasverakillposed,guillodsverak}.
It is based on first constructing global self-similar solutions from every $-1$-homogeneous initial data, thereby extending beyond the small-data regime of \cite{kochtataru} within this particular class. Secondly, the conjecture put forth in~\cite{jiasverakillposed} suggests that nonuniqueness arises due to bifurcations within or from this class of solutions, reducing the problem to the study of spectral properties around the self-similar profiles. One of the spectral conditions was numerically verified in~\cite{guillodsverak} on certain axisymmetric examples with pure swirl initial data.
However, to date, there exists no rigorous proof of the existence of an unstable self-similar solution, despite the considerable numerical evidence.

}



 \bigskip

A related program for the two-dimensional Euler equations was initiated in~\cite{BressanAposteriori,BressanSelfSimilar}.
Vishik \cite{Vishik1,Vishik2} (see also~\cite{OurLectureNotes}) was the first to obtain a fully rigorous result in the context of these approaches, based on self-similarity and instability. He obtained the sharpness of the Yudovich class in the forced two-dimensional Euler equations, that is, non-uniqueness of solutions with vorticity in $L^p$, $p < +\infty$, with a force  in $L^1_t L^p_x$ in the vorticity equation. 

Building on a fundamental step in this result, namely the construction of an unstable vortex, 
in \cite{albritton2021non} it was proven the nonuniqueness of Leray solutions to the forced Navier-Stokes equations.


\subsection{Our approach}

In this paper, we introduce a novel method for obtaining nonunique solutions to the transport equation. This method exhibits both similarities and remarkable differences when compared to the two approaches outlined above.

\begin{itemize}
    \item[(i)] {\it Self-similar nature:}
    Our solution displays a local (discrete) self-similar nature. However, in contrast with the construction described in section \ref{subsec:self-sim}, the singular region of the vector field and the density form a {\it widespread Cantor-type set} instead of a single point.

    \item[(ii)] {\it {Time-intermittency,} space-frequency localization:}
   Our vector field is localized in both space and frequency at each time. This stands in stark contrast to the prevailing implementation of convex integration, where solutions exhibit a degree of homogeneity in space, uniformly at each time.
\end{itemize}



To describe more in detail some technical aspects of our construction, the natural starting point is the work \cite{kumar2023nonuniqueness}. The author builds a flow map $X_{\bs{v}}$ of a vector field
\begin{equation}
    \bs{v} \in C^\infty([0,1) \times \R^d) \cap C([0, 1]; W^{1, p}(\R^d;\R^d) \cap C^\alpha([0,1] \times \R^d)\, , \quad p<d
\end{equation}
which is uniquely defined for every $ \bs{x}\in \R^d$ and $0\le t <1$, and such that $X_{\bs{v}}$ maps a Cantor set $\mathcal{C}_\Phi$ of dimension $s<d$, to the uniform distribution at time $t=1$.


As a consequence, the vector field $\bs{u}$ obtained by time-reversing $\bs{v}$ admits two distinct measure-valued solutions of the continuity equation \ref{eqn:CE} with $\rho_0 = \mathscr{L}^d|_{[-1/2,1/2]^d}$.



\begin{figure}[h]
\centering
 \includegraphics[scale = 0.5]{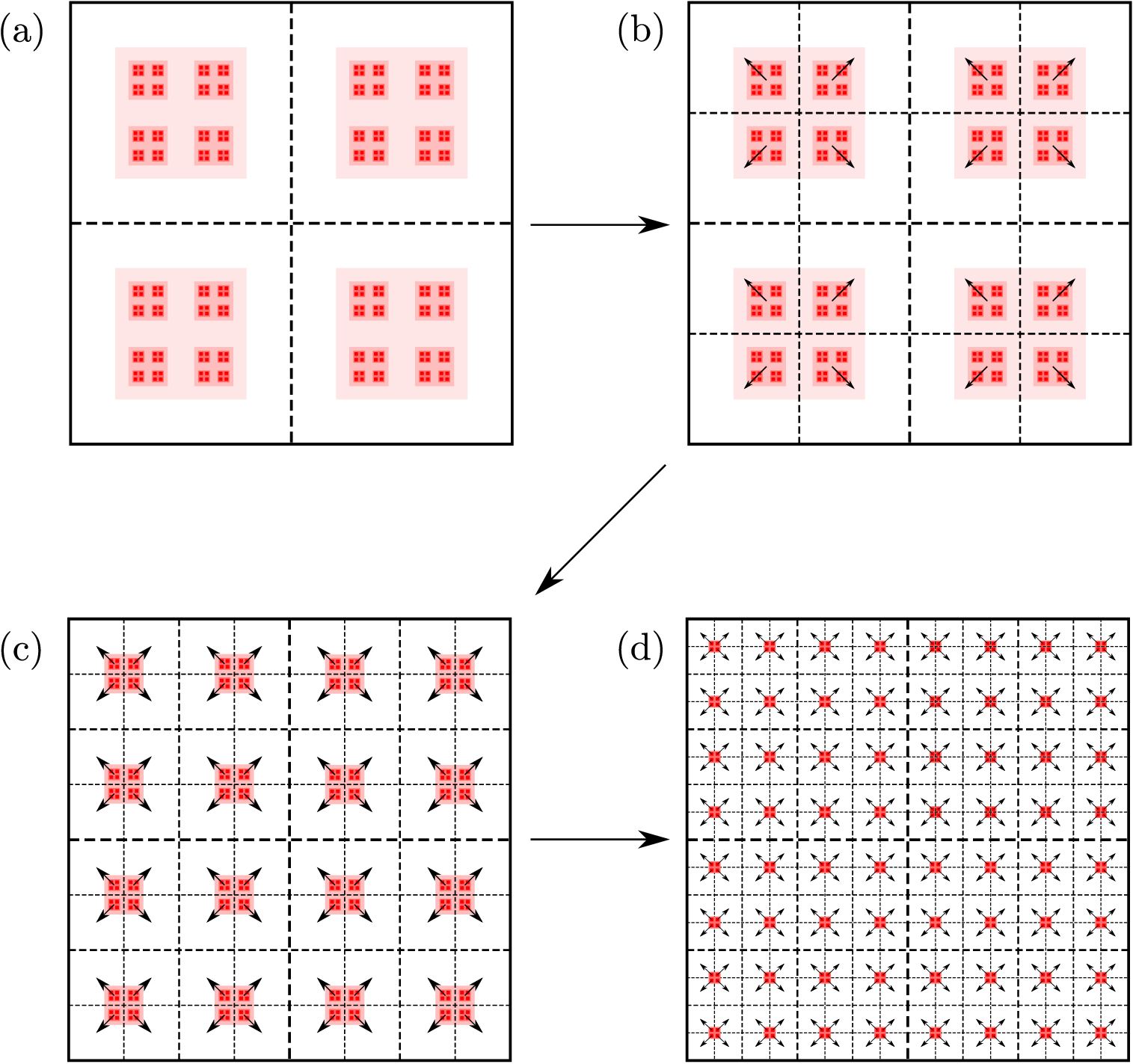}
 \caption{is adapted from \cite{kumar2022three}. The figure shows a few steps of how the vector field $\bs{v}$ distribute points in the Cantor set to the whole domain.}
 \label{fig: ODE paper}
 \end{figure}

\bigskip

The vector field $\bs{v}$ acts in infinite number of consecutive steps, where the $i$th step takes place over a time span of  $\frac{T_0}{2^{\beta i} }$ for some constants $T_0$, $0<\beta<1$.  In the $i$th step, the function of the vector field $\bs{v}$ is to translate the $i$th generation cubes in the Cantor set process so that their center is aligned with $i$th generation dyadic cube. The first four steps of this construction are shown in Figure \ref{fig: ODE paper}. The size of cubes translated at the $i$th stage by this vector field $\bs{v}$ is much smaller than the $i$th generation dyadic cubes and is the key reason for the control on the Sobolev norm of $\bs{v}$.

Finally, at the end of infinite steps the points in the Cantor set $\mathcal{C}_{\Phi}$ is distributed to the whole domain in finite time. We refer the reader to \cite[Section 3]{kumar2023nonuniqueness} for a complete overview of the construction of the vector field $\bs{v}$.



\bigskip

The construction above does not allow us to extend beyond measure-valued solutions, specifically to demonstrate nonuniqueness in the class of densities $\rho \in L^\infty_t L^r_{\bs{x}}$ for any $r \geq 1$. The density field $\rho(t,\cdot)$ becomes singular immediately after $t=0$.
To avoid the immediate concentration of mass and to align more closely with the uniqueness range found in \cite{BrueColomboDeLellis21}, we introduce a construction based on two novel ideas. The goal is to implement the features (i) and (ii) described above.

\begin{enumerate}[label = (\arabic*)]
    \item \emph{Interweaving the scaled copies of the vector field within itself:} We construct a vector field $\bs{v}$ using an infinite number of steps, following the approach in \cite{kumar2023nonuniqueness}. However, between the $i$th and $i+1$th steps, we  interweave spatially- and temporally-scaled copies of the vector field $\bs{v}$ itself. This first idea is detailed in Section \ref{sec: p=1} and enables us to establish nonuniqueness in the class of densities $\rho\in L^\infty_t L^1_x$, but no better.

This concrete implementation aligns with the self-similar Ansatz, featuring a widespread Cantor-type set, as mentioned earlier in this section.

    \item \emph{Asynchronous translation of cubes:}
     We introduce asynchronous motion of cubes in constructing the vector field $\bs{v}$. This imparts spatial heterogeneity to $\bs{v}$, distinguishing it from the constructions in \cite{kumar2023nonuniqueness} and the convex integration approach. This heterogeneity is crucial for achieving the sharp range.
\end{enumerate}

From a technical standpoint, the realization of these two ideas will rely on a fixed-point argument. The implementation of the idea of asynchronous translation of cubes will necessitate a non-standard setup, which is thoroughly described and motivated in Section \ref{sec: outline}.

\subsection{Motivation from the perspective of flow design}
Over the past decade, 
flow designs have played a role  in describing physical phenomena, such as turbulent flows, as well as in addressing mathematically motivated problems, such as constructing velocity fields to prove nonuniqueness in ODEs and PDEs arising in fluid mechanics. Some of the notable research directions and some of the references include:\begin{enumerate}[label = (\roman*)]
    \item Flow designs for optimal mixing rate for a given budget on the velocity field \cite{
    Bressanconjecture2003, CrDL,
    lindoeringmixing11,
    Thiffeaultmixingnorms12,
     Iyermixing14, YaoZlatos17,
AlbertiCrippaMazzucato19,
ElgindiZlatosuniversalmixer,
    coopermanmixing23}.
    \item Flow designs for optimal heat transport for a given budget on the velocity field \cite{hassanzadeh2014wall,tobasco2017optimal,  motoki2018optimal,
    kumar2022three,
    kumarthesis23,
    AlbenoptimalPRF23}. 
    \item Investigating flow configurations that result in enhanced and anomalous dissipation  \cite{drivas22anomdissp, Colomboanomalous23, Brueanomalous23, armstrong2023anomalous, elgindianomalous23}.
    \item 
    Producing counterexamples to existence or uniqueness of ``appropriate'' ODEs and PDEs solutions
    \cite{Aizeman78counter,
    DiPernaLions,
    depauw2003non, de2022smoothing,kumar2023nonuniqueness, pappalettera2023measure}. 
\end{enumerate}

{These problems are actively studied using theoretical, computational, and analysis techniques 
and flow design problems are expected to play a central role in our future understanding of many questions in fluid dynamics. For instance, at the theoretical level, many potential blow up phenomena such as for the Navier-Stokes equations still lack a candidate flow design.
From a physics standpoint they can explain natural phenomena observed in naturally occurring flows, such as the enhanced transport of nutrients to the ocean surface or the rapid angular momentum transport involved in the formation of stars. In terms of applications, flow designs can facilitate the devising of better control engineering strategies in fluid flows, such as the development of heat exchangers with improved efficiency.}


In this context, our paper presents 
a novel flow construction to the existing list of known mechanisms, with the particular features of spatial heterogeneity and of a singular set distributed in spacetime. 
These characteristics might suggest disorderliness within our constructions, but in reality, the flow designs considered in this paper exhibit a remarkable  organization such as  self-similar copies of themselves at various spatial and time scales
. As such our constructions provide interesting scenarios for applications in problems related to mixing and anomalous dissipation. Additionally, they have the potential to illuminate the inner workings of convex integration schemes, which can enable us to design improved variants of this technique.

\subsection{Structure of the paper}

Since we introduce a new technique for nonuniqueness, we present in Section~\ref{sec: p=1} the method in a simple scenario, namely we prove nonuniqueness of solutions in the class $L^\infty_t L^1_{\bs{x}}$. While the result is not new and it is a special case of Theorem \ref{Intro: main thm}, we present its proof for pedagogical reasons. Focusing on this simpler case allows us to introduce the new construction in a simple setting and helps us to explain better the associated subtle nuances with each new idea. We then clarify which additional  ideas are needed to improve it to the sharp result of Theorem \ref{Intro: main thm}. Section~\ref{sec: outline} contains the proof of the optimal result, which can be read independently of Section~\ref{sec: p=1} but is more involved.

\paragraph*{Acknowledgements}
EB would like to express gratitude for the financial support received from Bocconi University.
MC is supported by the Swiss State Secretariat for Education, Research and lnnovation (SERI) under contract number MB22.00034 through the project TENSE. 
A.K. gratefully acknowledges the support provided by a Dissertation–Year fellowship at UC Santa Cruz, which facilitated the completion of the initial phase of this work.



\section{Nonuniqueness of solutions in $L^\infty_t L_{\bs{x}}^1$}
\label{sec: p=1}

In this section, we prove the following nonuniqueness result. As announced in the introduction, this special case of Theorem~\ref{Intro: main thm} introduces the simplified framework of our construction and is presented for pedagogical reasons.

\begin{theorem}\label{thm:nonuniq L1}
    Let $d\ge 2$.
    For every $p<d$ there exists a compactly supported divergence-free  vector field
    \begin{equation}\label{u-reg}
        \bs{u} \in C([0, 1]; W^{1,p}(\mathbb{R}^d, \mathbb{R}^d)) \cap C^\alpha([0,1]\times \mathbb{R}^d) \, , \quad \text{for some $\alpha\in (0,1)$}\, ,
    \end{equation}
    such that there are at least two nonnegative solutions $\rho, \Tilde{\rho}\in L^\infty([0, 1]; L^1(\mathbb{R}^d))$ of the continuity equation starting from the same initial condition. 
\end{theorem}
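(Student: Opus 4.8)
The plan is to build the vector field as a (time‑reversed) copy of a divergence‑free field $\bs v$ whose flow performs a self‑similar mixing mechanism, and then to exhibit the two solutions by hand. The starting point is the elementary building block of \cite{kumar2023nonuniqueness}: for each scale index $i$ there is a smooth, compactly supported, divergence‑free field which over a short time window rigidly translates a family of pairwise disjoint small cubes — the $i$‑th generation of a Cantor‑type process — so as to align their centers with a coarser dyadic grid. The key point there is that the cubes being moved can be taken \emph{much smaller} than the distance they travel, so the $W^{1,p}$ norm of the block is controlled \emph{precisely because $p<d$} (the gradient lives on a set whose smallness beats the size of the gradient), while the block stays $C^\alpha$ with a controlled constant.

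Next I would define $\bs v$ on $[0,1)\times\R^d$ by \emph{interleaving}: split $[0,1)$ into consecutive intervals $I_1, I_2, \dots$ with $|I_i|\simeq T_0 2^{-\beta i}$, run the $i$‑th base step on a subfamily of them, and on the remaining intervals run \emph{spatially‑ and temporally‑rescaled copies of $\bs v$ itself}, one inside each small cube produced so far. Since this description of $\bs v$ refers to $\bs v$, I would make it rigorous as the unique fixed point of a contraction on a complete metric space of vector‑field profiles, with contraction factor governed by the geometric decay of the scaling parameters; the ``widespread Cantor set'' of the introduction is then the set of points touched at infinitely many scales. One checks the regularity of the fixed point by summing the contributions of all rescaled copies across all scales: $\sup_{t<1}\|\bs v(t)\|_{W^{1,p}}<\infty$ (again using $p<d$ to make the series over scales converge after the copies are inserted), continuity of $t\mapsto\bs v(t)$ into $W^{1,p}$ up to $t=1$, a uniform $C^\alpha([0,1]\times\R^d)$ bound for a suitable $\alpha\in(0,1)$ coming from the ratio of spatial to temporal scales, and compact support together with $\diverge\bs v=0$ inherited from the blocks. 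Put $\bs u(t,\cdot):=-\bs v(1-t,\cdot)$.

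With $\rho_0=\mathscr L^d|_{[-1/2,1/2]^d}$, the first solution $\tilde\rho$ is the absolutely continuous Lagrangian branch $\tilde\rho(t)=(X^{\bs u}(t,\cdot))_\#\rho_0$ coming from the flow of $\bs u$ started at $t=0^+$ (smooth and volume preserving for $t>0$, so $\tilde\rho(t)$ is $\mathscr L^d$ on a volume‑one set and $\tilde\rho\in L^\infty_tL^1_x$). The second solution $\rho$ is the one dictated by the self‑similar mixing: without the interleaving it would concentrate, by $t=1$, onto a Cantor set of dimension $<d$ as in \cite{kumar2023nonuniqueness}, but the inserted rescaled copies keep re‑spreading the mass at every scale, so $\rho(t)$ remains absolutely continuous with $\|\rho(t)\|_{L^1}=1$ for all $t\in[0,1]$ while acquiring a genuine self‑similar fine structure. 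Both branches solve \eqref{eqn:CE} weakly with datum $\rho_0$, and they are distinct — seen on a suitable time slice by comparing supports (or the fine‑scale profiles).

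The main obstacle is the interplay of the two middle steps: turning the self‑referential description of $\bs v$ into an honest fixed‑point statement \emph{while} propagating uniform $W^{1,p}$ and $C^\alpha$ bounds through the iteration — two demands that compete, since every further level of interleaving helps spread mass but also adds gradient, and the convergence of the resulting series over scales is exactly where the hypothesis $p<d$ is consumed. A secondary difficulty is the uniform‑in‑time (not just at the dyadic breakpoints) estimate showing that the mixing branch $\rho$ never fully concentrates, i.e. stays bounded in $L^1$ throughout $[0,1]$.
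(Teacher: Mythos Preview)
Your plan for $\bs{v}$ --- interleaved rescalings closed up as a Banach fixed point in $C_t\dot W^{1,p}_{\bs{x}}\cap C^\alpha_{t,\bs{x}}$, with $p<d$ and small $\alpha$ making the scale series summable --- is exactly the paper's approach, so that half is fine.

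The gap is on the density side. First, your trivial branch: once you interleave, $\bs{v}$ is \emph{not} smooth for $t<1$ (rescaled copies of $\bs{v}$ sit on every $\mathcal{E}_i$, so the non-Lipschitz structure recurs at all time scales), hence the classical flow $X^{\bs{u}}$ you invoke does not exist. The fix is easy --- take $\tilde\rho\equiv 1_{\ol{Q}(0,1)}$, which solves \eqref{eqn:CE} directly because $\bs{u}$ is divergence-free and supported in $\ol{Q}(0,1)$ --- but it means your description of $\tilde\rho$ as a Lagrangian pushforward is not a proof. Second, and more substantially, ``the one dictated by the self-similar mixing'' is not a construction of the mixing branch $\rho$: with $\bs{v}$ only $C^\alpha$ there is no flow to push forward along, and the self-referential definition of $\bs{v}$ forces a \emph{second} fixed-point argument, now on densities. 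The paper runs a contraction $\mathcal{G}$, and two points you miss are that (i) mass conservation forbids contracting in $L^\infty_tL^1_{\bs{x}}$, so one works in $L^s_tL^1_{\bs{x}}\cap\mathrm{Lip}_tW^{-1,q}_{\bs{x}}$ and upgrades afterwards, and (ii) one must still check that the fixed points $(\rho,\bs{v})$ actually solve \eqref{eqn:CE}, which is done by a self-improving argument on the measure of the set of good times. Finally, your ``secondary difficulty'' --- $\rho$ staying bounded in $L^1$ --- is no difficulty at all: for a nonnegative compactly supported solution with a divergence-free field, $\|\rho(t)\|_{L^1}$ is constant; this is exactly how the paper upgrades $L^s_tL^1_{\bs{x}}$ to $L^\infty_tL^1_{\bs{x}}$ at the end.
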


\begin{figure}[h]
\centering
 \includegraphics[scale = 0.4]{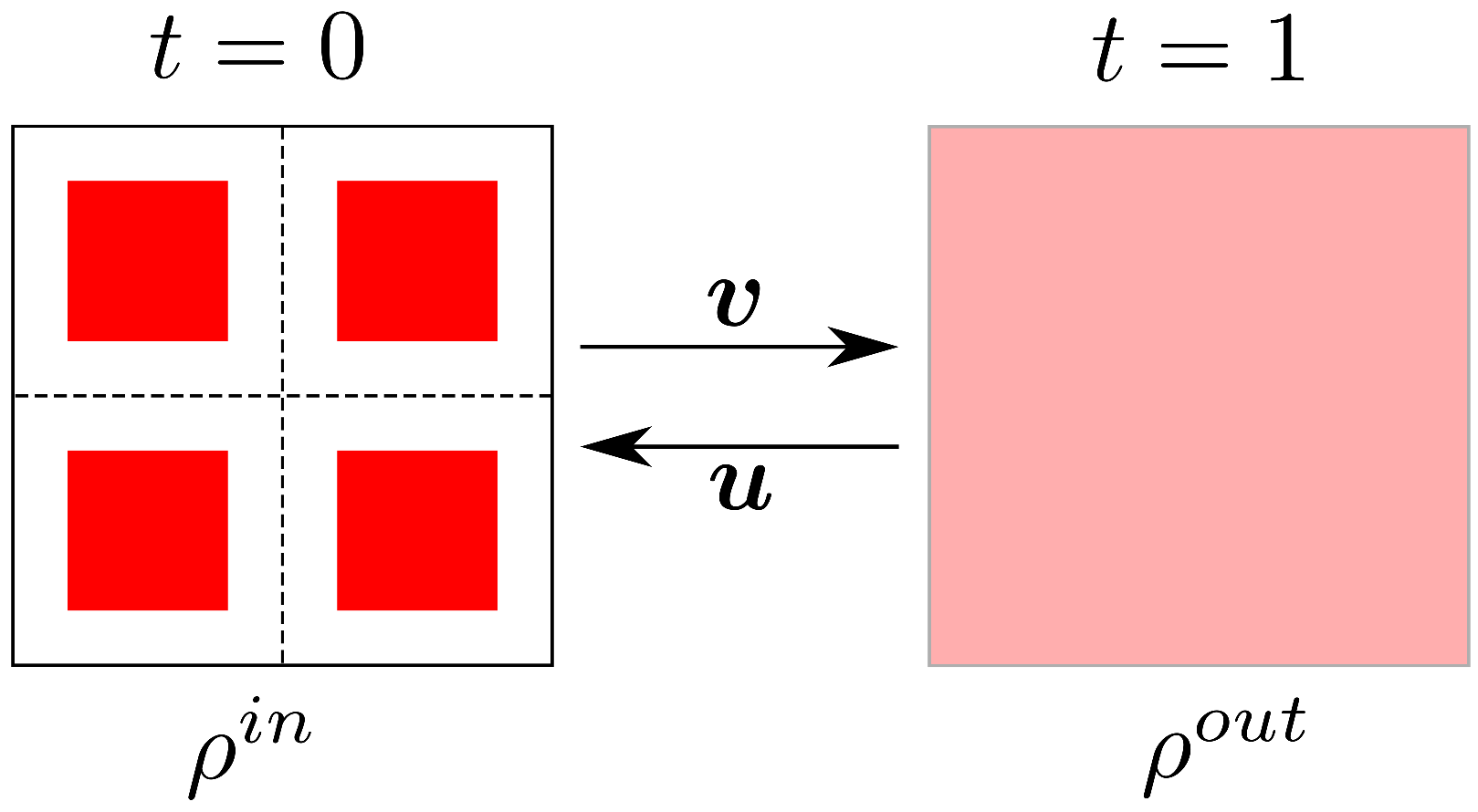}
 \caption{shows the initial and the final density of the vector fields $\bs{u}$ and $\bs{v}$. The intensity of the red color indicates the increase in the magnitude of the density. 
 }
 \label{fig: overall situation}
\end{figure}

\subsection{Overview of the construction}
\label{subsec: L1 overview}
We denote $Q(\bs{x}, \ell)$ to be the cube centered at $\bs{x}$ and has length $\ell$.
For convenience, we construct the vector field $\bs{u}$ as $\bs{u}(t, \cdot) \coloneqq -\bs{v}(1-t, \cdot)$, namely through a time reversal argument applied to a vector field $\bs{v} 
$ in the same regularity class \eqref{u-reg} for which there is a solution $\rho \in L^\infty([0, 1]; L^1(\mathbb{R}^d))$ with initial and final conditions given by
\begin{align}
\rho(0, \cdot) = \rho^{in} \qquad \text{and} \qquad \rho(1, \cdot) = \rho^{out} \equiv 1_{\ol{Q}(0, 1)}.
\end{align}
Here, $\rho^{in}$ concentrates on $2^d$ cubes of size $1/2^{(1 + \nu)}$ for some parameter $\nu > 0$, and the centers of these cubes align with the center of the dyadic cubes of the first generation, namely
\begin{align}
\rho^{in}(\bs{x}) \coloneqq
\begin{cases}
2^{\nu d} \qquad \text{if} \quad \bs{x} \in \bigcup\limits_{i_1, \dots , i_d \in \{-1, 1\}} \ol{Q}\left(\left(\frac{i_1}{4}, \dots \frac{i_d}{4}\right), \frac{1}{2^{1 + \nu}}\right), \\
0 \qquad \text{otherwise}.
\end{cases}
\end{align}

The vector field $\bs{v}$ stays supported inside $\ol{Q}(0, 1)=[-\frac 12,\frac12]^d$, the cube centered at the origin with the side of length $1$, at all times. The nonuniqueness stated in Theorem~\ref{thm:nonuniq L1} follows from the existence of such $\rho$: indeed, $\rho(1-t,x)$ solves the continuity equation with vector field $\bs{u}$ starting from $\rho^{out}$ and ending in $\rho^{in}$ (see Figure \ref{fig: overall situation}) and at the same time $\tilde \rho(t,x) \equiv \rho^{out}$ is another solution from the same initial datum.

\subsubsection{Time series}
Let $0<\beta<1$, we setup a time series as follows:
\begin{align}
\tau_i = 1 - \frac{1}{2^{\beta(i-1)}} \quad \text{for} \quad i \in \mathbb{N} \qquad \text{and} \qquad \tau_\infty = 1.
\label{def: time series L1}
\end{align}
Figure \ref{fig: available definition of w}{\color{blue}a} shows this time series. For convenience, we also define midpoints and the time intervals
\begin{align}
\tau^{mid}_i = \frac{\tau_i + \tau_{i+1}}{2}, \qquad \mathcal{E}_i \coloneqq [\tau_i, \tau_i^{mid}], \qquad \mathcal{O}_i \coloneqq (\tau_i^{mid}, \tau_{i+1}).
\end{align}

 \begin{figure}[H]
\centering
 \includegraphics[scale = 0.28]{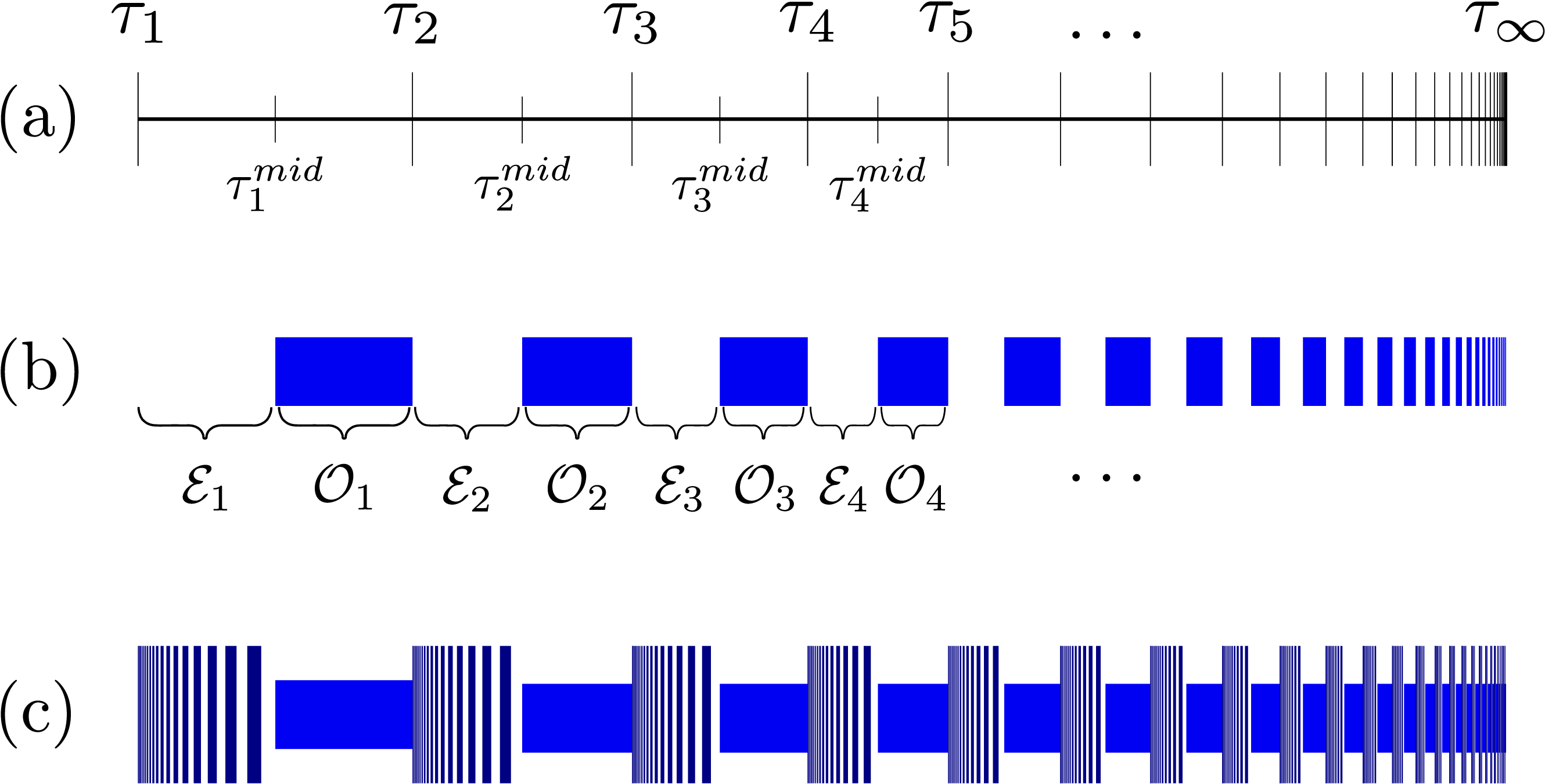}
 \caption{(a) shows the partition of the time interval $[0, 1]$ using the time series (\ref{def: time series L1}). Panel (b) shows the time intervals $\bigcup\limits_{i \in \N} \mathcal{O}_i$ using blue lines. The significance of $\bigcup\limits_{i \in \N} \mathcal{O}_i$ is that we define our fixed point relation $\mathcal{F}$ and $\mathcal{G}$ (see (\ref{def: L1 mapping F}) and (\ref{def: L1 mapping G})) explicitly on these intervals using the building block vector field from section \ref{sec: building block}. On the rest of the interval $[0, 1]$ (the white blanks in panel (b)) we use scaled copies of the vector field $\bs{v}$ (resp. the density field $\rho$) in our definition of $\mathcal{F}$ (resp. $\mathcal{G}$). Panel (c) illustrates how the explicit definitions are then progressively fed onto more time intervals in the fixed point iteration process. For the purpose of demonstration, we use taller and darker blue lines in panel (c).
}
 \label{fig: available definition of w}
\end{figure}

\subsubsection{The action of $\bs{v}$}
We now elaborate on the action of the vector field $\bs{v}$ which is also displayed in figure \ref{fig: density evolution}. 






At time $t = \tau_i$ (to fix ideas, consider $i=1$ and $\tau_1=0$), the density $\rho$ concentrates on $2^{id}$ cubes of size $\frac{1}{2^{(1+\nu)i}}$ and the center of these cubes align with the center of the $i$th generation dyadic cubes. 
From time $t = \tau_i$ to $t = \tau^{mid}_i$ these cubes break into $2^{(i+1)d}$ cubes and the magnitude of the density goes up such that the $L^1$ norm remains preserved.
The key observation, which we will delve into further in the upcoming paragraphs, is that the construction of the vector field $\bs{v}$ in the time interval $[\tau_i, \tau_i^{mid}]$ depends on the spatially-temporally-scaled copies of $\bs{v}$ itself.
. In the time interval $\mathcal{O}_i = (\tau^{mid}_i, \tau_{i+1})$, the vector field $\bs{v}$ is defined explicitly in terms of a building block vector field (see (\ref{def: L1 mapping F})) and spreads these cubes so that their center now align with the centers of the $i+1$-th generation dyadic cubes.


\bigskip

With the above description, the definition of the vector field $\bs{v}$ on the time intervals $\mathcal{E}_i = [\tau_i, \tau_i^{mid}]$ (where the cubes break into smaller cubes) is still missing. The most crucial observation we make that going from time $t = \tau_i$ to time $t = \tau_i^{mid}$ consists of $2^{id}$ smaller versions of the original problem of moving $\rho^{in}$ to $\rho^{out}$. This observation is the key that allowed us to go beyond the class of measure solutions and to prove nonuniqueness in $L^\infty_t L^1_{\bs{x}}$. Figure \ref{fig: w i tilde w i overbar} illustrates this point on the time interval $\mathcal{E}_1 = [\tau_1, \tau_1^{mid}]$. Our observation therefore inspires the construction of the vector field $\bs{v}$ and the density field $\rho$ using a fixed point argument, where the definition of the fixed point relation is explicit on the time intervals $\mathcal{O}_i = (\tau_{i}^{mid}, \tau_{i+1})$ (shown using blue lines in figure \ref{fig: available definition of w}{\color{blue}b}). Figure \ref{fig: available definition of w}{\color{blue}c}  shows how this explicit definition would be fed on more and more of the time interval $[0, 1]$ in the fixed point iteration process and eventually covering the time interval $[0, 1]$ almost everywhere.

\subsection{
The centers of the $\eta$-th generation dyadic cubes}
\label{subsec: L1 distinguished points}

Given $\eta \in \mathbb{N}$, we divide the $[-\frac{1}{2}, \frac{1}{2}]^d$ into $2^{\eta d}$ cubes with sides of length $2^{-\eta}$. We denote by $\bs{c}^\eta_k \in [-\frac{1}{2}, \frac{1}{2}]^d$, $k \in \{1, 2, \dots , 2^{\eta d}\}$, the centers of the cubes of the $\eta$-th subdivision, namely
\begin{align}
\bs{c}^\eta_k = \left(\frac{1}{2^\eta} \left[j_{\eta, 1}^k - \frac{2^\eta - 1}{2}\right], \dots \frac{1}{2^\eta} \left[j_{\eta, d}^k - \frac{2^\eta - 1}{2}\right]\right) \, ,
\label{def: c eta k}
\end{align}
where $(j_{\eta, 1}^k, \dots ,j_{\eta, d}^k) \in \{0, \dots ,2^\eta - 1\}^d$ is the unique $d$-tuple  such that $\sum_{l = 1}^d 2^{\eta (l-1)} j_{\eta, l}^k = k-1$.

\subsection{Construction of the vector field $\bs{v}$}
\label{subsec: L^1 vel construction}

As explained in the overview section \ref{subsec: L1 overview}, our construction of the vector field $\bs{v}$ is based on a fixed point iteration applied to a mapping $\mathcal{F}$. 
The goal of this section is to present the functional analytic set-up. Given $\alpha\in (0,1)$ and $p\ge 1$, we define
\begin{align}
\mathcal{X} \coloneqq \big\{ \bs{w} \in  C([0, 1]; \dot W^{1, p}(\mathbb{R}^d, \mathbb{R}^d)) \cap C^\alpha([0, 1] \times \mathbb{R}^d, \mathbb{R}^d)  \; \big| \qquad \qquad \qquad \qquad \qquad \qquad \qquad \qquad \nonumber \\
\nabla \cdot \bs{w} \equiv 0, \; \bs{w}(0, \cdot) = \bs{w}(1, \cdot) \equiv \bs{0},  \; \supp_{\bs{x}} \bs{w}(t, \cdot) \subseteq \ol{Q}(0, 1)
\big\},
\end{align}
equipped with the norm
\begin{align}\label{eq.norms1}
\norm{\bs{w}}_{\mathcal{X}}
:=
\norm{\bs{w}}_{C_t \dot W^{1, p}_{\bs{x}}} + \norm{\bs{w}}_{C^\alpha_{t, \bs{x}}} \, ,
\end{align}

\begin{figure}[H]
\centering
 \includegraphics[scale = 0.4]{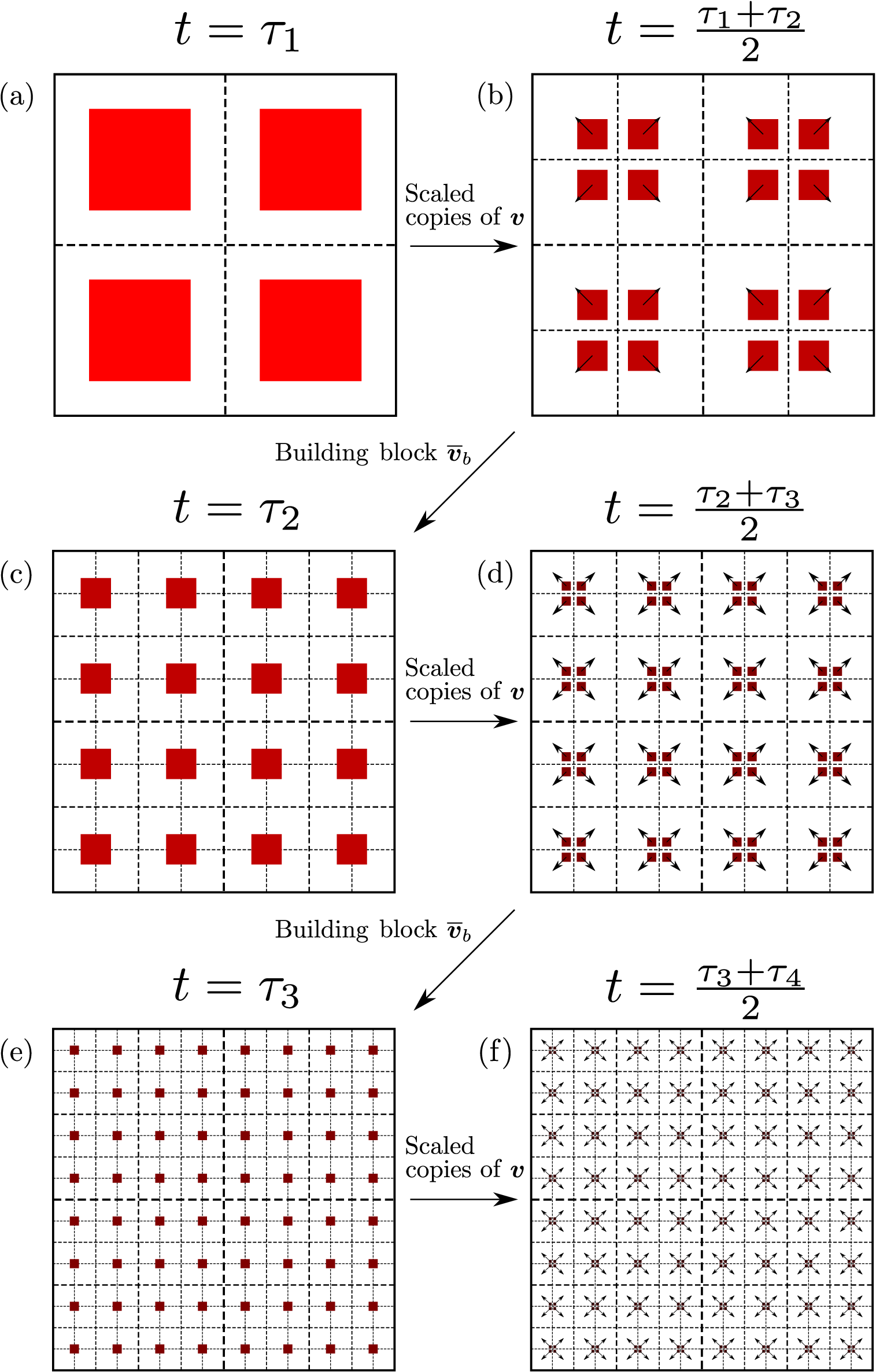}
 \caption{shows the evolution of the density $\rho$ (under the flow of vector field $\bs{v}$) at discrete times: $\tau_1$, $\tau_1^{mid} = \frac{\tau_1 + \tau_2}{2}$, $\tau_2$, $\tau_2^{mid}  = \frac{\tau_2 + \tau_3}{2}$, $\tau_3$ and $\tau_3^{mid} = \frac{\tau_3 + \tau_4}{2}$. The increase in the magnitude of the density is shown through the darkening of the red color.  The time interval $\mathcal{E}_i = [\tau_i, \tau_i^{mid}]$ is where the cubes break into smaller cubes via a scaled copy of the vector field $\bs{v}$ itself as illustrated in figure \ref{fig: w i tilde w i overbar}. On the time intervals $\mathcal{O}_i = (\tau_i^{mid}, \tau_{i+1})$ the cubes spread using the building block vector field $\ol{\bs{v}}_b$ from section~\ref{sec: building block}.}
 \label{fig: density evolution}
\end{figure}

 \begin{figure}[h]
\centering
 \includegraphics[scale = 0.35]{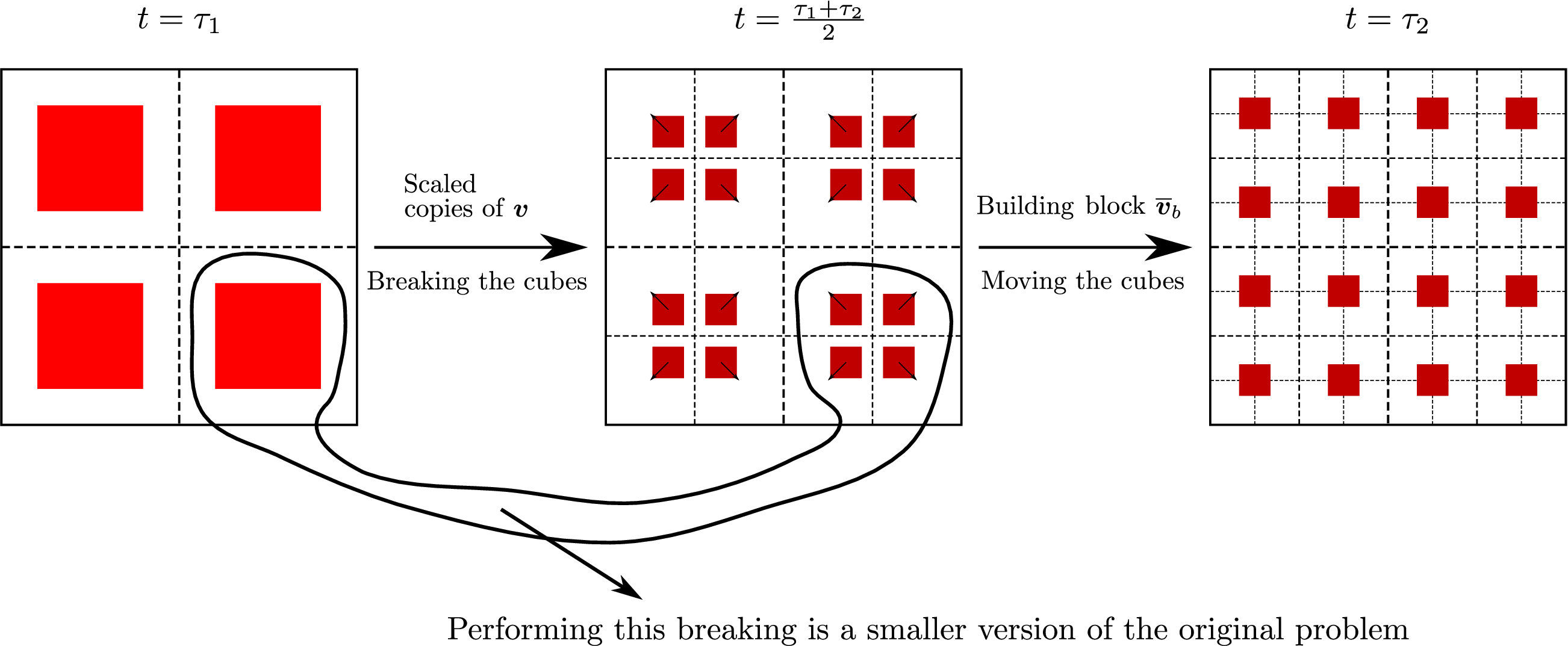}
 \caption{illustrates that going from $t = \tau_1$ to $t = \frac{\tau_1 + \tau_2}{2}$ consists of four scaled copies of the original problem (i.e., going from $\rho^{in}$ to $\rho^{out}$) but reversed in time. Going from $t = \frac{\tau_1 + \tau_2}{2}$ to $t = \tau_2$ requires translating the cubes which we do using the building block vector field $\ol{\bs{v}}_b$ from section \ref{sec: building block}.}
 \label{fig: w i tilde w i overbar}
\end{figure}

\begin{figure}[h]
\centering
 \includegraphics[scale = 1.3]{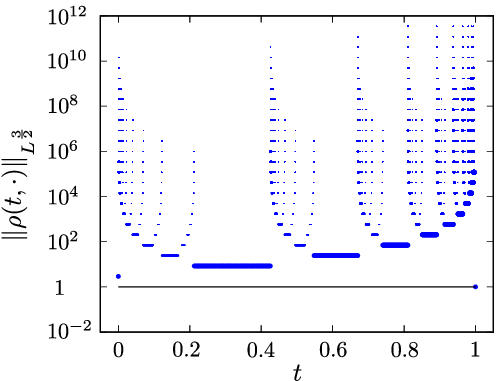}
 \caption{ shows a plot of the $\norm{\rho(t, \cdot)}_{L^{\frac{3}{2}}}$ as a function of time $t$. This figure corresponds to $d = 2$, $\beta = 0.8$ and $\nu = 2.3$. The $L^{\frac{3}{2}}$ norm of the density field is not bounded and blows up at several instances in time. We also added a plot of $\norm{\rho(t, \cdot)}_{L^{1}}$ for reference. The $L^1$ norms stays constant (in particular bounded) in time which is
 to be expected.}
 \label{fig: plot Lr norm density L1 consturction}
\end{figure}

where we denoted
\begin{align}\label{eq:norms}
   &\norm{f}_{\dot W^{1,p}} := \norm{\nabla f }_{ L^p}
   \\
   &\norm{f}_{C^\alpha} := \sup_{\bs{x}\neq \bs{y}} \frac{|f(\bs{x}) - f(\bs{y})|}{|\bs{x} - \bs{y}|^\alpha}\, .
\end{align}
\begin{remark}\label{rmk:L1 int}
    Each $\bs{w}\in \mathcal{X}$ belongs to $L^\infty_t L^p_{\bs{x}}$ by Sobolev embedding and compactness of the support.
\end{remark}

Given $\bs{w} \in \mathcal{X}$, we define $\mathcal{F}(\bs{w})$ separately on each time intervals $\mathcal{E}_i = [\tau_i, \tau_i^{mid}]$, $\mathcal{O}_i = (\tau_i^{mid}, \tau_{i+1})$.


\begin{subequations}
\begin{enumerate}[label = (\roman*)]
    \item When $t \in \mathcal{E}_i$, we define
    \begin{align}
        \mathcal{F}(\bs{w})(t, \bs{x}) \coloneqq \sum_{k \in \{1, \dots , 2^{id}\}} - \frac{1}{2^{(1+\nu)i}} \frac{1}{\tau^{mid}_i - \tau_i} \bs{w}\left(\frac{\tau^{mid}_i - t}{\tau^{mid}_i - \tau_i}, 2^{(1+\nu)i} (\bs{x} - \bs{c}_k^i)\right).
    \end{align}
        \item When $t \in \mathcal{O}_i$, we define $ \mathcal{F}(\bs{w})(t, \bs{x}) \coloneqq \ol{\bs{v}}_{b, i}(t,\bs{x})$,
    where
    \begin{align}
      \ol{\bs{v}}_{b, i}(t, \bs{x}) \coloneqq  \sum_{k \in \{1, \dots ,2^{id}\}}  \frac{1}{2^{i}} \frac{1}{\tau_{i+1} - \tau^{mid}_i} \; \ol{\bs{v}}_b\left(\frac{t - \tau^{mid}_i}{\tau_{i+1} - \tau^{mid}_i}, 2^{i} (\bs{x} - \bs{c}_k^i); 1, \frac{1}{2^{\nu i}}, \frac{1}{2^{\nu(i+1)}}\right)
    \end{align}
    and $\ol{\bs{v}}_b$ is the building block defined in Section~\ref{sec: building block}.
    \item Finally, when $t = 1$, we define
        $\mathcal{F}(\bs{w})(t, \cdot) \equiv \bs{0}.$
\end{enumerate}
\label{def: L1 mapping F}
\end{subequations}

\begin{proposition}
Let $d \geq 2$, $0 < \beta < 1$ and $\nu > \nu_0 \coloneqq 1 - \log_2(2^\beta - 1)$. If
\begin{align}\label{cond1}
p < \frac{\nu d}{\nu + \beta}, \qquad \text{and} \qquad \alpha < \frac{1 - \beta}{1 + \nu}\, ,
\end{align}
then $\mathcal{F}: \mathcal{X} \to \mathcal{X}$ is a contraction.
\end{proposition}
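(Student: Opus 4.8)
The plan is to show that $\mathcal{F}$ maps $\mathcal{X}$ into itself with the required structural properties, and then that it is a strict contraction in the norm $\norm{\cdot}_{\mathcal{X}}$, under the hypotheses \eqref{cond1}. First I would verify that $\mathcal{F}(\bs{w})$ is well-defined as an element of $\mathcal{X}$: the divergence-free condition and the support condition $\supp_{\bs{x}}\subseteq \ol{Q}(0,1)$ are preserved because on each $\mathcal{E}_i$ the summand is a rescaled copy of $\bs{w}$ centered at $\bs{c}_k^i$ and supported in $\ol{Q}(\bs{c}_k^i, 2^{-(1+\nu)i})\subseteq \ol{Q}(0,1)$, with the $2^{id}$ cubes being pairwise disjoint; the building-block summands on $\mathcal{O}_i$ are handled by the corresponding properties of $\ol{\bs{v}}_b$ from Section~\ref{sec: building block}. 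The vanishing at $t=0$ and $t=1$, as well as continuity in time across the junction points $\tau_i$, $\tau_i^{mid}$, $\tau_{i+1}$ and at the accumulation point $\tau_\infty=1$, needs to be checked: at $t=\tau_i^{mid}$ both definitions must agree (this should be arranged so that $\bs{w}$ evaluated at time argument $0$ or $1$ vanishes, matching $\ol{\bs{v}}_b$ at its endpoint), and continuity at $t=1$ follows once we show the relevant norms of the pieces on $\mathcal{E}_i\cup\mathcal{O}_i$ tend to $0$ as $i\to\infty$.

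The core of the argument is a set of scaling computations. For a function of the form $\bs{x}\mapsto \lambda^{-1}\mu^{-1}\bs{w}(s(t), \lambda^{-1}(\bs{x}-\bs{c}))$ with a spatial scale $\lambda = 2^{-(1+\nu)i}$ and time-rescaling factor of order $\mu = \tau_i^{mid}-\tau_i \sim 2^{-\beta i}$, one computes how $\dot W^{1,p}_{\bs x}$ and $C^\alpha_{t,\bs x}$ behave. For the gradient: $\norm{\nabla_{\bs x}[\cdots]}_{L^p}$ picks up a factor $\lambda^{-1}\mu^{-1}\cdot\lambda^{-1}\cdot\lambda^{d/p}$ from the chain rule and the change of variables, and then one sums over the $2^{id}$ translates (whose supports are disjoint, so the $L^p$ norms add in $\ell^p$, giving a further $2^{id/p}$). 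The resulting prefactor is, up to constants, $2^{i[(1+\nu)(1 - d/p + 1) + \beta - d/p]}$ — I would collect the exponent and check that the condition $p < \nu d/(\nu+\beta)$ makes it summable/contractive, i.e. strictly negative; similarly the Hölder seminorm in $(t,\bs x)$ scales with a factor governed by $\lambda^{1-\alpha}\mu^{-1}$ combined with the $C^\alpha$ scaling in time of the rescaled time variable, and one checks the exponent is controlled by $\alpha < (1-\beta)/(1+\nu)$. The same scaling bookkeeping applies to the building-block term on $\mathcal{O}_i$, using the estimates on $\ol{\bs{v}}_b$ that should be recorded in Section~\ref{sec: building block}; here the relevant scales are $2^{-i}$ spatially and $\tau_{i+1}-\tau_i^{mid}\sim 2^{-\beta i}$ in time. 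Summing a geometric-type series in $i$ (using $\nu > \nu_0$ to guarantee the series converges and the building-block contributions are finite) yields $\norm{\mathcal{F}(\bs w)}_{\mathcal{X}} \le C\norm{\bs w}_{\mathcal{X}} + C_b$, and, crucially, since $\mathcal{F}$ is affine in $\bs w$ (the building-block term does not depend on $\bs w$), the difference $\mathcal{F}(\bs w_1)-\mathcal{F}(\bs w_2)$ only involves the $\mathcal{E}_i$ pieces, so $\norm{\mathcal{F}(\bs w_1)-\mathcal{F}(\bs w_2)}_{\mathcal{X}} \le q\,\norm{\bs w_1-\bs w_2}_{\mathcal{X}}$ with $q<1$ once the exponents above are strictly negative and the leading constant is arranged small (which is where one may need to also use that the first step starts at $i=1$, or to absorb constants by a mild modification of the time series — I would state precisely where the smallness of $q$ comes from).

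The main obstacle I expect is the sharp bookkeeping of the exponents in the two scaling estimates — making sure that the gradient exponent is exactly the quantity that turns negative precisely when $p < \nu d/(\nu+\beta)$, and the Hölder exponent precisely when $\alpha < (1-\beta)/(1+\nu)$ — together with the summability of the series over $i$, which is where the hypothesis $\nu>\nu_0 = 1-\log_2(2^\beta-1)$ enters (it ensures $\tau_{i+1}-\tau_i^{mid}$ is comparable to $\tau_i^{mid}-\tau_i$ and that the building-block rescaling parameters $2^{-\nu i}$, $2^{-\nu(i+1)}$ stay in the admissible range for $\ol{\bs v}_b$). A secondary technical point is the continuity of $\mathcal{F}(\bs w)$ in time at the accumulation point $t=1$ and at the interval endpoints: this requires that the $C_t\dot W^{1,p}_{\bs x}$ and $C^\alpha_{t,\bs x}$ norms of the $i$-th block tend to $0$, which follows from the same negative exponents, but the matching of the piecewise definitions at $\tau_i^{mid}$ must be checked by hand using the boundary behavior of $\bs w$ and $\ol{\bs v}_b$.
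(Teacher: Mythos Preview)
Your overall strategy matches the paper's: verify the structural properties, compute the scaling of each norm separately on $\mathcal{E}_i$ and $\mathcal{O}_i$, use the affine structure so that only the $\mathcal{E}_i$ pieces enter the Lipschitz estimate, and take the worst $i$. Two points need correction.

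First, and most importantly, you misidentify the role of $\nu>\nu_0$. The intervals $\tau_i^{mid}-\tau_i$ and $\tau_{i+1}-\tau_i^{mid}$ are equal by definition (each is half of $\tau_{i+1}-\tau_i$), so there is nothing to ``ensure comparable''; and admissibility of the building-block parameters only needs $\nu>1$. The actual role of $\nu>\nu_0=1-\log_2(2^\beta-1)$ is exactly the point you left open, namely where the smallness of the contraction constant $q$ comes from. On $\mathcal{E}_i$ the Lipschitz factor in $C_t\dot W^{1,p}_{\bs{x}}$ is
\[
\frac{1}{\tau_i^{mid}-\tau_i}\,2^{-\nu d i/p}=\frac{2^{\beta i+1}}{2^\beta-1}\,2^{-\nu d i/p},
\]
which under $p<\nu d/(\nu+\beta)$ is decreasing in $i$ and hence largest at $i=1$; requiring this to be strictly less than $1$ gives $p<\nu d/(1+\beta-\log_2(2^\beta-1))$, and this is implied by $p<\nu d/(\nu+\beta)$ precisely when $\nu>\nu_0$. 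The same mechanism handles the $C^\alpha$ factor. So no ``mild modification of the time series'' is needed; the hypothesis $\nu>\nu_0$ is what closes the contraction.

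Second, two smaller slips. There is no geometric series to sum: each $t$ lies in a single $\mathcal{E}_i$ or $\mathcal{O}_i$, and the vector field vanishes at all interfaces, so both norms are controlled by a supremum over $i$. Also your scaling exponent for $\dot W^{1,p}$ is off: the prefactor in the definition is $\lambda\mu^{-1}$ (not $\lambda^{-1}\mu^{-1}$) with $\lambda=2^{-(1+\nu)i}$, so after the gradient, the change of variables, and the $2^{id}$ disjoint translates one gets $\mu^{-1}2^{-\nu d i/p}$, i.e.\ exponent $\beta-\nu d/p$, not the expression you wrote. The binding condition $p<\nu d/(\nu+\beta)$ actually comes from the building-block piece on $\mathcal{O}_i$ (exponent $\beta+\nu-\nu d/p$), not from the $\mathcal{E}_i$ piece.
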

\begin{proof}
The proof of this proposition shows that $\mathcal{F}$ maps $\mathcal{X}$ to $\mathcal{X}$, and that $\mathcal{F}$ is a contraction.



Let $\bs{w} \in \mathcal{X}$, it is clear from the definition of $\mathcal{F}$ in (\ref{def: L1 mapping F}), points $\bs{c}^\eta_k$ from (\ref{def: c eta k}), and the building block vector field $\ol{\bs{v}}_b$ from Proposition \ref{prop: building block vector field} that 
\begin{align}
\nabla \cdot \mathcal{F}(\bs{w})(t, \cdot) = 0, \quad \mathcal{F}(\bs{w})(0, \cdot) = \mathcal{F}(\bs{w})(1, \cdot) \equiv \bs{0}, \quad \text{and} \quad \supp_{\bs{x}} \mathcal{F}(\bs{w})(t, \cdot) \subseteq \left[-\frac{1}{2}, \frac{1}{2}\right]^d.
\end{align}

We estimate the Sobolev norm $\dot W^{1, p}$ of $\mathcal{F}(\bs{w})$.
From the definition of $\mathcal{F}$ in (\ref{def: L1 mapping F}), we obtain that 
\begin{subequations}
     when $t \in \mathcal{E}_i$, we have
\begin{align}
\norm{\mathcal{F}(\bs{w}) - \mathcal{F}(\wt{\bs{w}})}_{C_t \dot W_{\bs{x}}^{1, p}}  = \norm{\mathcal{F}(\bs{w}-\wt{\bs{w}})(t, \cdot)}_{\dot W^{1, p}} \leq   \frac{1}{\tau^{mid}_i - \tau_i} \left(\frac{1}{2^{\nu d i}}\right)^{\frac{1}{p}}   \norm{\bs{w}-\wt{\bs{w}}}_{C_t \dot W^{1, p}_{\bs{x}}}   \, .
\end{align}

 When $t \in \mathcal{O}_i$, $\mathcal{F}(\bs{w})(t,\cdot) - \mathcal{F}(\wt{\bs{w}})(t, \cdot) \equiv 0$ and by scaling properties of the norm and Proposition~\ref{prop: building block vector field}, we have
\begin{align}
\norm{\mathcal{F}(\bs{w})(t, \cdot)}_{\dot W^{1, p}} \leq C(\nu, d, p)   \frac{2^{\nu i \left( {1-\frac{d}{p}}  \right)}}{\tau_{i+1} - \tau^{mid}_i}   \, .
\end{align}

\label{L1 W1p estimate F}
\end{subequations}

We estimate the H\"older norm $C^\alpha_{t, \bs{x}}$ of $\mathcal{F}(\bs{w})$.
\begin{subequations}
By explicit computations of rescalings of norms in the definition of $\mathcal{F}(\bs{w})$, we have 
\begin{align}
\norm{\mathcal{F}(\bs{w}) - \mathcal{F}(\wt{\bs{w}})}_{C^\alpha(\mathcal{E}_i \times \R^d)} 
&= \norm{\mathcal{F}(\bs{w}-\wt{\bs{w}})}_{C^\alpha(\mathcal{E}_i \times \R^d)}\nonumber
\\&\leq 
\frac{1}{2^{(1+\nu)i}}  \frac{1}{\tau^{mid}_i - \tau_i} \max\left\{\frac{1}{\tau^{mid}_i - \tau_i}, 2^{(1+\nu)i}\right\}^\alpha   \norm{\bs{w}-\wt{\bs{w}}}_{C^\alpha_{t, \bs{x}}} 
\nonumber
\\
&\leq C(\beta) 2^{-(1-\beta-\alpha)i} \norm{\bs{w}-\wt{\bs{w}}}_{C^\alpha_{t, \bs{x}}} \label{eqn:contrF1}
\end{align}
 The standard interpolation inequality and the estimates on $\ol{\bs{v}}_{b}$ in Proposition~\ref{prop: building block vector field} 
\begin{align}
\norm{\mathcal{F}(\bs{w})}_{C^\alpha(\mathcal{O}_i \times \mathbb{R}^d)} 
&\leq 2 \norm{{\mathcal{F}(\bs{w})}}_{L^\infty_{t, \bs{x}}}^{1 - \alpha} \norm{\nabla_{(t, \bs{x})}{\mathcal{F}(\bs{w})}}_{L^\infty_{t, \bs{x}}}^\alpha \, ,
\nonumber
\\ &\leq C 
\frac{1}{2^{i}} \frac{1}{\tau_{i+1} - \tau^{mid}_i} \max\left\{\frac{1}{\tau_{i+1} - \tau^{mid}_i}, 2^{i}\right\}^\alpha   2^{\nu \alpha i}  
\nonumber
\\& \leq C(\beta) 2^{-(1-\beta-\alpha - \nu \alpha)i}
\label{L1 Holder bound interval O}
\end{align}
\label{L1 C1alpha estimate F}
\end{subequations}

From the continuity of  $\bs{w}$ and $\ol{\bs{v}}_b$, it is clear that $\mathcal{F}(\bs{w})$ is continuous in the intervals $\mathcal{E}_i$, $\mathcal{O}_i$ for every $i\in \mathbb{N}$. 
 $\mathcal{F}(\bs{w})$ is also continuous at the interfaces $\tau_i$, $\tau_i^{mid}$ and $\tau_\infty$, where it is identically zero, thanks to the boundary conditions $\bs{w}(0, \cdot) = \bs{w}(1, \cdot) \equiv \bs{0}$ and the fact that $\supp_t \ol{\bs{v}}_b \subseteq \left[\frac{1}{3}, \frac{2}{3}\right]$. 
\bigskip

{\bf Step 1: $\mathcal{F}$  maps  $\mathcal{X}$ to $\mathcal{X}$.}
At this point, we can combine (\ref{L1 W1p estimate F}{\color{blue}{a-b}})  with $\wt{\bs{w}}=0$  to deduce
\begin{align}
\norm{\mathcal{F}(\bs{w})}_{C_t \dot W_{\bs{x}}^{1, p}} \leq C(\beta, \nu, d, p)  \sup_{i \in \mathbb{N}} \max\left\{ 2^{i \left(\beta - \frac{\nu d}{p}\right)}, \; 2^{\beta i} \, 2^{\nu \left( {1-\frac{d}{p}}  \right) i} \right\} \max\left\{1, \norm{\bs{w}}_{C_t \dot W^{1, p}_{\bs{x}}}\right\}.
\end{align}
Therefore, to ensure $\mathcal{F}(\bs{w}) \in C([0, 1]; \dot W^{1,p}(\mathbb{R}^d, \mathbb{R}^d))$, we impose the first requirement in \eqref{cond1}, namely
$
p < \frac{\nu d}{\beta + \nu}
\label{L1 condition on p F map X to X}
$. 
Similarly, we can combine (\ref{L1 C1alpha estimate F}{\color{blue}{a-b}})
to find
\begin{align}
\norm{\mathcal{F}(\bs{w})}_{C^\alpha_{t, \bs{x}}} 
\leq C(\beta)  
\sup_{i \in \mathbb{N}} 2^{-(1-\beta-\alpha-\nu \alpha)i} \max\{1, \norm{\bs{w}}_{C^\alpha_{t, \bs{x}}} \} \, ,
\end{align}
then to ensure $F(\bs{v}) \in C^\alpha_{t, \bs{x}}$, we only need  the second requirement in \eqref{cond1}, $\alpha < \frac{1 - \beta}{1+\nu}$.

\bigskip

{\bf Step 2: $\mathcal{F}$ is a contraction.}
Assuming (\ref{L1 condition on p F map X to X}), the constant on the right-hand side of (\ref{L1 W1p estimate F}{\color{blue}{a}}) 
is the largest for $i=1$, and it is smaller than $1$ when
\begin{align}
p < \frac{\nu d}{1 + \beta - \log_2(2^\beta - 1)} \, .
\end{align}
The latter condition is automatically satisfied, once we have (\ref{L1 condition on p F map X to X}) and
\begin{align}
\nu > 1 - \log_2(2^\beta - 1).
\label{F Holder cond. 2 nu}
\end{align}
Under \eqref{F Holder cond. 2 nu}, to make the constant on the right-hand side of (\ref{L1 C1alpha estimate F}{\color{blue}{a}}) 
less than $1$, we need
\begin{align}
\alpha < 1 - \frac{\beta - \log_2(2^\beta - 1)}{1 + \nu} \, .
\label{F Holder cond. 1 alpha}
\end{align}
However, the condition (\ref{F Holder cond. 1 alpha}) is met if \eqref{cond1} holds.
\end{proof}

\subsection{Construction of the density field $\rho$}
\label{subsec: L^1 density construction}
The construction of the density field is similar to that of the vector field from the previous subsection. We will define a mapping $\mathcal{G}$ and will look for fixed points. We begin by building the functional setup, which is this time not due only to the natural spaces in which $\rho$ lies, but also related to have a proper contraction (therefore we cannot work in $L^\infty([0,1]; L^1(\R^d))$) and to guarantee that the continuity equation is solved by the fixed point. For $s, q<\infty$ we define
\begin{align}
\mathcal{Y} \coloneqq \big\{ \varrho \in   L^s([0, 1]; L^1(\mathbb{R}^d)) \; \big| 
\; \rho \geq 0 \mbox{ a.e.},\; \supp_{\bs{x}} \varrho(t, \cdot) \subseteq 
\ol{Q}(0, 1)
\big\} \, ,
\end{align}
\begin{align}
\mathcal{Z} \coloneqq \big\{ \varrho \in   \text{Lip}([0, 1]; W^{-1, q}(\mathbb{R}^d)) \; \big| 
\varrho(0, \cdot) = \rho^{in}, \; \varrho(1, \cdot) = \rho^{out}, \; \supp_{\bs{x}} \varrho(t, \cdot) \subseteq \ol{Q}(0, 1)
\big\}\, .
\end{align}
Recall that $f\in L^1_{\rm loc}(\R^d)$ belongs to $W^{-1, q}(\mathbb{R}^d)$ if 
\begin{align}\label{eq:norms2}
\norm{f}_{W^{-1, q}} \coloneqq   \sup\limits_{\substack{\phi \in W^{1, q^\prime} \\ \norm{\nabla \phi}_{L^{q^\prime}} \leq 1}} \int_{\mathbb{R}^d} f \phi \; {\rm d} x < \infty \, .
\end{align}
We endow $\text{Lip}([0, 1]; W^{-1, q}(\mathbb{R}^d))$ with the homogeneous norm
\begin{equation}\label{eq:norms3}
    \| \varrho \|_{\text{Lip}([0, 1]; W^{-1, q}(\mathbb{R}^d))}
    := 
    \sup_{t,s\in [0,1], \\ t\neq s} \frac{1}{|t-s|} \| \varrho(t,\cdot) - \varrho(s,\cdot) \|_{W^{-1, q}}
\end{equation}



\bigskip

We will look for contraction in $\mathcal{X} = \mathcal{Y} \cap \mathcal{Z}$ endowed with the norm
\begin{align}\norm{\varrho
}_{\mathcal{X}} = 
\norm{\varrho}_{L^s_tL^1_{\bs{x}}} + \norm{\varrho}_{\text{Lip}_t W^{-1,q}_{\bs{x}}}\, .
\end{align}
We work with the norm $L^s_t L^1_{\bs{x}}$ for some $s < \infty$, even though our ultimate goal is to create a solution in $L^\infty_t L^1_{\bs{x}}$. This choice is made because the conservation of mass does not allow contraction in $L^\infty_t L^1_{\bs{x}}$. However, once we find a fixed point in $L^s_t L^1_{\bs{x}}$, we will upgrade it to $L^\infty_t L^1_{\bs{x}}$ using an iteration procedure.

Now, we define the mapping $\mathcal{G}$ separately on the time intervals $\mathcal{E}_i$ and $\mathcal{O}_i$:

\begin{subequations}
\begin{enumerate}[label = (\roman*)]
    \item When $t \in \mathcal{E}_i$, we define
    \begin{align}
        \mathcal{G}(\varrho)(t, \bs{x}) \coloneqq \sum_{k \in \{1, \dots , 2^{id}\}} 2^{\nu d i} \varrho\left(\frac{\tau^{mid}_i - t}{\tau^{mid}_i - \tau_i}, 2^{(1+\nu)i} (\bs{x} - \bs{c}_k^i)\right).
    \end{align}
        \item When $t \in \mathcal{O}_i$, we define $\mathcal{G}(\varrho)(t, \bs{x}) \coloneqq \ol{\rho}_{b, i}(t, \bs{x})$ where
    \begin{align}
       \ol{\rho}_{b, i}(t, \bs{x}) \coloneqq \sum_{k \in \{1, \dots ,2^{id}\}} 2^{\nu d (i+1)} \; \ol{\rho}_b\left(\frac{t - \tau^{mid}_i}{\tau_{i+1} - \tau^{mid}_i}, 2^{i} (\bs{x} - \bs{c}_k^i); 1, \frac{1}{2^{\nu i}}, \frac{1}{2^{\nu(i+1)}}\right).
    \end{align}
    \item Finally, we set $\mathcal{G}(\varrho)(1, \cdot) \equiv \rho^{out}$.
\end{enumerate}
\label{def: L1 mapping G}
\end{subequations}

\begin{proposition}
Let $d\ge 2$, $1 < s,q<\infty$. If $\nu \ge 2$ and
\begin{align}\label{eq3}
\frac{\nu d}{1 - \beta} < \frac{q}{q-1} \, ,
\end{align}
then $\mathcal{G}: \mathcal{X}\to \mathcal{X}$ is a contraction.
\end{proposition}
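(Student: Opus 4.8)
The plan is to show that $\mathcal{G}$ maps $\mathcal{X} = \mathcal{Y} \cap \mathcal{Z}$ into itself and is a contraction with respect to the norm $\norm{\cdot}_{\mathcal{X}} = \norm{\cdot}_{L^s_t L^1_{\bs{x}}} + \norm{\cdot}_{\mathrm{Lip}_t W^{-1,q}_{\bs{x}}}$. As in the proof of the analogous proposition for $\mathcal{F}$, the structural constraints are immediate: on each $\mathcal{E}_i$ the map rescales $\varrho$ by a spatial dilation of factor $2^{(1+\nu)i}$ centered at the $\bs{c}^i_k$ together with an amplitude $2^{\nu d i}$, which is exactly the mass-preserving normalization, so nonnegativity and the support condition $\supp_{\bs{x}} \mathcal{G}(\varrho)(t,\cdot) \subseteq \ol{Q}(0,1)$ are inherited; on each $\mathcal{O}_i$ one invokes the properties of the building block $\ol{\rho}_b$ from Section~\ref{sec: building block} (which I am assuming provides a nonnegative, compactly supported density connecting the appropriate cube configurations with the stated Lipschitz-in-time $W^{-1,q}$ bound). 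The endpoint conditions $\mathcal{G}(\varrho)(0,\cdot) = \rho^{in}$ and $\mathcal{G}(\varrho)(1,\cdot) = \rho^{out}$ hold by construction, using that $\supp_t \ol{\rho}_b \subseteq [\frac13,\frac23]$ so that the values at the interface times $\tau_i$, $\tau_i^{mid}$ are dictated by the self-similar pieces, matching up consistently with the $\eta$-generation cube picture described in Section~\ref{subsec: L1 distinguished points}.

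Next I would estimate the two norms. For the $L^s_t L^1_{\bs{x}}$ part: on $\mathcal{E}_i$, the $2^{id}$ translated-and-rescaled copies of $\varrho$ have disjoint $\bs{x}$-supports, and the change of variables $\bs{y} = 2^{(1+\nu)i}(\bs{x}-\bs{c}^i_k)$ turns $\int |\mathcal{G}(\varrho)(t,\bs{x})| \, d\bs{x}$ into $2^{\nu d i} \cdot 2^{id} \cdot 2^{-(1+\nu)di} \int |\varrho(\cdots,\bs{y})| \, d\bs{y} = \int |\varrho(\cdots)| \, d\bs{y}$, so the spatial $L^1$ norm is exactly preserved pointwise in the rescaled time. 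Then the time rescaling $t \mapsto \frac{\tau_i^{mid}-t}{\tau_i^{mid}-\tau_i}$ contributes a Jacobian $(\tau_i^{mid}-\tau_i)^{1/s}$ to the $L^s_t$ norm over $\mathcal{E}_i$; since $\tau_i^{mid}-\tau_i \sim 2^{-\beta i}$, this gives a factor $2^{-\beta i/s}$ and summing over $i$ the geometric series converges, yielding $\norm{\mathcal{G}(\varrho)-\mathcal{G}(\tilde\varrho)}_{L^s_t L^1_{\bs{x}}(\bigcup \mathcal{E}_i)} \le C(\beta,s) 2^{-\beta/s}\norm{\varrho-\tilde\varrho}_{L^s_t L^1_{\bs{x}}}$ with a constant that, for the contraction, is largest at $i=1$ and $<1$ once we are below the relevant threshold. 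On $\mathcal{O}_i$ the map is $\varrho$-independent, so $\mathcal{G}(\varrho)-\mathcal{G}(\tilde\varrho) \equiv 0$ there, and the $L^s_t L^1_{\bs{x}}$ norm of $\mathcal{G}(\varrho)$ itself on $\mathcal{O}_i$ is a fixed finite quantity controlled by $\norm{\ol{\rho}_b}$, again summable. For the $\mathrm{Lip}_t W^{-1,q}_{\bs{x}}$ part: the key scaling is that a spatial dilation by $\lambda$ scales the $W^{-1,q}$ norm by $\lambda^{-1-d/q'}$ times the mass factor, combined with the time-interval length $\sim 2^{-\beta i}$ in the denominator of the Lipschitz seminorm; tracking the powers of $2$ on $\mathcal{E}_i$ gives a factor like $2^{-(1-\beta)i} 2^{\nu d i /q}$ (up to bookkeeping of the $d/q'$ exponent and disjointness of supports so the sup over test functions splits), and requiring this to be summable and $<1$ at $i=1$ is precisely where the hypothesis $\frac{\nu d}{1-\beta} < \frac{q}{q-1}$ enters. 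On $\mathcal{O}_i$, the self-similar building-block piece contributes an analogous geometrically decaying term using the Lipschitz bound on $\ol{\rho}_b$, so the whole series converges. Crucially, I must also check continuity of $\mathcal{G}(\varrho)$ at the interface times in the $W^{-1,q}$ topology — this follows because the rescaled pieces vanish (in the appropriate sense, or rather match the correct cube configuration) at the interfaces thanks to $\supp_t \ol{\rho}_b \subseteq [\frac13,\frac23]$ and the boundary data of $\varrho \in \mathcal{Z}$.

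I expect the main obstacle to be twofold. First, verifying that the fixed point actually solves the continuity equation with the vector field $\bs{v}$ (the fixed point of $\mathcal{F}$): the self-similar recursive structure means $\mathcal{G}(\rho)$ on $\mathcal{E}_i$ is a rescaled copy of $\rho$ on all of $[0,1]$ transported by a rescaled copy of $\bs{v}$, and one must confirm the scaling exponents in $\mathcal{F}$ (the $2^{-(1+\nu)i} (\tau_i^{mid}-\tau_i)^{-1}$ velocity amplitude) and in $\mathcal{G}$ (the $2^{\nu d i}$ density amplitude) are exactly compatible so that the rescaled pair still solves $\partial_t \rho + \bs{v}\cdot\nabla\rho = 0$ — this is a consistency check on the design rather than a hard estimate, but it is the heart of why $\mathcal{X}=\mathcal{Y}\cap\mathcal{Z}$ (in particular the $\mathrm{Lip}_t W^{-1,q}$ component, which encodes the PDE weakly) is the right space. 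Second, the genuinely quantitative difficulty is the $W^{-1,q}$ contraction estimate on the $\mathcal{E}_i$ intervals: one must carefully exploit the disjointness of the spatial supports of the $2^{id}$ summands to avoid losing a factor $2^{id}$ when dualizing against test functions $\phi$ with $\norm{\nabla\phi}_{L^{q'}}\le 1$ — the right move is to note that for such $\phi$ the restrictions to the disjoint cubes have $\ell^{q'}$-summable gradient norms, so pairing against the rescaled $\varrho$'s and using Hölder in the counting index recovers only the single-copy scaling, and then the condition \eqref{eq3} together with $\nu \ge 2$ makes the resulting exponent of $2$ negative. Once both norms are contracted on $\bigcup_i (\mathcal{E}_i \cup \mathcal{O}_i)$, which is $[0,1]$ up to a null set, and the interface continuity is in hand, the Banach fixed point theorem in $\mathcal{X}$ gives the unique $\rho$, completing the proof.
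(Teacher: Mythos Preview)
Your approach is essentially the paper's: estimate $L^s_tL^1_{\bs{x}}$ via mass preservation plus the time Jacobian, estimate $\mathrm{Lip}_tW^{-1,q}_{\bs{x}}$ via the scaling of $W^{-1,q}$ under dilation together with disjointness of supports, and check interface continuity. Three corrections are worth flagging. (i) The claim $\supp_t \ol{\rho}_b \subseteq [\tfrac13,\tfrac23]$ is false --- that holds for $\ol{\bs{v}}_b$, not for the density. By Proposition~\ref{prop: building block vector field} one has $\ol{\rho}_b(0,\cdot)=\ol{\rho}^s$ and $\ol{\rho}_b(1,\cdot)=\ol{\rho}^e$, so interface continuity at $\tau_i,\tau_i^{mid}$ is checked by matching these explicit cube configurations against the rescaled $\rho^{in},\rho^{out}$ coming from $\varrho\in\mathcal{Z}$; your parenthetical ``or rather match the correct cube configuration'' is the right instinct, but the argument you actually wrote would not go through. (ii) The $L^s_tL^1_{\bs{x}}$ contraction constant comes out exactly $2^{-1/s}$, since $\sum_i|\mathcal{E}_i|=\tfrac12$; there is no $\beta$-dependence and no ``largest at $i=1$'' analysis --- you simply sum the $s$-th powers. (iii) The paper also argues separately that $\mathcal{G}(\varrho)(1,\cdot)=\rho^{out}$: the Lipschitz bound yields a $W^{-1,q}$ limit as $t\to1$, which must coincide with the distributional limit of the explicit sequence $\mathcal{G}(\varrho)(\tau_i,\cdot)\to\rho^{out}$. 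Finally, your third paragraph on verifying the continuity equation is out of scope for this proposition; that step belongs to the proof of Theorem~\ref{thm:nonuniq L1}.
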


\begin{proof}
    
{We estimate the $L^s_t L^1_{\bs{x}}$ norm}.
\begin{subequations}
    When $t \in \mathcal{E}_i$, we get
    \begin{align}
        \int_{\mathbb{R}^d}|\mathcal{G}(\varrho)|(t, \cdot)\; {\rm d}\bs{x} = \int_{\mathbb{R}^d} |\varrho|\left(\frac{\tau^{mid}_i - t}{\tau^{mid}_i - \tau_i}, \cdot \right) \; {\rm d}\bs{x}.
    \end{align}
When $t \in \mathcal{O}_i$, we get
    \begin{align}
        \int_{\mathbb{R}^d}|\mathcal{G}(\varrho)|(t, \cdot)\; {\rm d}\bs{x} \leq 1.
    \end{align}
Finally, $\int_{\mathbb{R}^d}|\mathcal{G}(\varrho)|(1, \cdot)\; {\rm d}\bs{x} = 1$.
\end{subequations}
Putting everything together leads to
\begin{align}
\norm{\mathcal{G}(\varrho)}_{L^s_t L^1_{\bs{x}}} \leq 2^{-1/s}(1 +  \norm{\varrho}_{L^s_t L^1_{\bs{x}}})\, .
\end{align}
Moreover, it is immediate to check that $\mathcal{G}: \mathcal{Y} \to \mathcal{Y}$ is a contraction:
\begin{equation}
    \norm{\mathcal{G}(\varrho) - \mathcal{G}(\tilde{\varrho})}_{L^s_tL^1_{\bs{x}}}
    \le 2^{-1/s}\norm{\varrho - \Tilde{\varrho}}_{L^s_tL^1_{\bs{x}}} \, ,
    \quad \text{for every $\varrho,\Tilde{\varrho}\in \mathcal{Y}$}\, .
\end{equation}
We estimate the $\text{Lip}_t W^{-1, q}_{\bs{x}}$ norm.
\begin{subequations}
    When $t \in \mathcal{E}_i$, we get
\begin{align}
\norm{\mathcal{G}(\varrho)-\mathcal{G}(\tilde\varrho)}_{\text{Lip}(\mathcal{E}_i, W^{-1, q}_{\bs{x}})} 
&
\leq \frac{1}{\tau_i^{mid} - \tau_i} \left(\frac{1}{2^{(1+\nu)i}}\right)^{1 - \frac{d}{q^\prime}} \norm{\varrho-\tilde\varrho}_{\text{Lip}_t W^{-1, q}_{\bs{x}}},
\end{align}
\begin{align}
\norm{\mathcal{G}(\varrho)}_{\text{Lip}(\mathcal{E}_i, W^{-1, q}_{\bs{x}})} 
&
\leq \frac{1}{\tau_i^{mid} - \tau_i} \left(\frac{1}{2^{(1+\nu)i}}\right)^{1 - \frac{d}{q^\prime}} \norm{\varrho}_{\text{Lip}_t W^{-1, q}_{\bs{x}}}.
\end{align}
        In the time interval $\mathcal{O}_i$, we use that $\mathcal{G}(\varrho)$ is explicit and solves a transport equation {since it is made of a sum of suitable rescalings of $\rho_b$, which in turn solves the transport equation thanks to Proposition \ref{prop: building block vector field}}. For every test function $\varphi \in C^\infty_c(\mathbb{R}^d)$, it holds by the continuity equation, H\"older inequality, and since ${\supp \ol{\rho}_{b, i}(t, \cdot)}$ has measure ${2^{-\nu d i}}$
\begin{align}
 \left|\frac{d}{dt} \int_{\mathbb{R}^d} \ol{\rho}_{b, i} \, \varphi \; {\rm d} \bs{x}\right| & =  \left|- \int_{\mathbb{R}^d} \ol{\rho}_{b, i} \, \ol{\bs{v}}_{b, i} \, \nabla \varphi \; {\rm d} \bs{x}\right| \nonumber \\
 & \leq \norm{\ol{\bs{v}}_{b, i}}_{L^\infty_{t, \bs{x}}} \norm{\ol{\rho}_{b, i}}_{L^\infty_{t, \bs{x}}} \int_{\supp \ol{\rho}_{b, i}(t, \cdot)} |\nabla \varphi| \; {\rm d} \bs{x} \nonumber \\
& \leq C(\nu, d)  \frac{1}{2^i} \frac{2^{\nu d i}}{\tau_{i+1} - \tau^{mid}_i} \left(\frac{1}{2^{\nu d i}} \right)^{\frac{1}{q}} \norm{\nabla \varphi}_{L^{q^\prime}}.
\end{align}
Hence,
\begin{align}
\norm{\mathcal{G}(\varrho)}_{\text{Lip}(\mathcal{O}_i, W^{-1, q}_{\bs{x}})} \leq C(\nu, d)  \frac{2^{i(\nu  \frac{d}{q'} - 1)}}{\tau_{i+1} - \tau^{mid}_i}  \, .
\end{align}
\label{lipW-1q}
\end{subequations}
$\mathcal{G}(\varrho)$ is continuous at the interfaces $\tau_i$, $\tau_i^{mid}$ for every $i\in \mathbb{N}$ since the right and left limits at these points are explicit, coming either as a rescaling of $\rho^{in}$ and $\rho^{out}$ or from suitable rescalings of \eqref{eqn:rhose}. 
Hence, when \eqref{eq3} holds, we have
\begin{align}\label{eq:z1}
\norm{\mathcal{G}(\varrho)}_{\text{Lip}_t, W^{-1, q}_{\bs{x}}} 
&\leq C(\nu, d) 
\sup_{i \in \mathbb{N}} \max\left\{2^{i(\nu \frac{d}{q'} + \beta-1)}, \; 2^{i\beta} 2^{i(1+\nu)(\frac{d}{q'}-1)} \right\} \max\left\{1, \norm{\rho}_{\text{Lip}_t, W^{-1, q}_{\bs{x}}}\right\}
\\& \leq C(\nu, d)
 \sup_{i \in \mathbb{N}} 2^{i(\nu \frac{d}{q'} + \beta-1)} \max\left\{1, \norm{\rho}_{\text{Lip}_t, W^{-1, q}_{\bs{x}}}\right\}< \infty \, .
\end{align}
To conclude that $\mathcal{G}(\rho)\in \mathcal{X}$ we need to check that $\mathcal{G}(\rho)(1,\bs{x})=\rho^{out}$. The limit $\lim_{t\to 1} \mathcal{G}(\rho)(t, \cdot)$ exists in $W^{-1,q}$ as a consequence of the Lipschitz bound \eqref{eq:z1}, on the other hand $(\mathcal{G}(\rho)(\tau_i,\cdot))_{i\in \N}$ is explicit and converges in the sense of distributions to $\rho^{out}$. Since the distributional limit must coincide with the $W^{-1,q}$ limit, we conclude that $\mathcal{G}(\rho)(1,\cdot)=\rho^{out}$.

{To ensure that $\mathcal{G}: \mathcal{X} \to \mathcal{X}$ is a contraction, by \eqref{lipW-1q}{\color{blue}(a)} we impose
\begin{equation}
   \sup_{i\in \mathbb{N}} \frac{1}{\tau_i^{mid} - \tau_i} \left(\frac{1}{2^{(1+\nu)i}}\right)^{1 - \frac{d}{q^\prime}} < 1 \, ,
\end{equation}
which is certainly satisfied if $\nu \ge 2$ and \eqref{eq3} holds.}
\end{proof}

\subsection{Proof of Theorem \ref{thm:nonuniq L1}}
We fix $1<p<d$, and $1<s<\infty$. There exist $0<\beta<1$, $\nu \ge \nu_0$, $\alpha\in (0,1)$ and $q>1$ such that both $\mathcal{F}$ and $\mathcal{G}$ admit unique fix points
\begin{equation}
    \bs{v}\in C([0,1]; \dot W^{1,p})\cap C^\alpha \, , \quad
    \rho\in \text{Lip}([0,1]; W^{-1,q})\cap L^s([0,1];L^1) \, .
\end{equation}


We show that $\rho, \bs{v}$ solve the continuity equation, namely
\begin{align}
\frac{d}{dt} \int_{\mathbb{R}^d} \rho \, \phi \, {\rm d} \bs{x} = - \int_{\mathbb{R}^d} \rho \, \bs{v} \cdot \nabla \phi \, {\rm d} \bs{x} \quad \mbox{for all }  \phi \in C^\infty_c(\mathbb{R}^d),
\label{density solves continuity: weak sense for L1}
\end{align}
for a.e. $t \in [0, 1]$. Notice that the left-hand side of \eqref{density solves continuity: weak sense for L1} exists for a.e. $t \in [0, 1]$ since the function $t \to \int_{\mathbb{R}^d} \rho \, \phi \, {\rm d} \bs{x} $ is Lipschitz by $ \rho\in \text{Lip}([0,1]; W^{-1,q})$.
The validity of \eqref{density solves continuity: weak sense for L1}  would be enough to conclude that $\rho, \bs{v}$ solve the continuity equation
.

To prove \eqref{density solves continuity: weak sense for L1}  for a.e. $t \in [0, 1]$, 
 we let $\Sigma\subset [0,1]$ be the set of those times such that \eqref{density solves continuity: weak sense for L1} holds. We observe that, if $\rho, \bs{v}$ satisfy \eqref{density solves continuity: weak sense for L1} on  $ \Sigma$, from the definition of $\mathcal{F}$ and $\mathcal{G}$ we have that $\mathcal G(\rho)$ and $\mathcal F(\bs{v})$ satisfy \eqref{density solves continuity: weak sense for L1} on $\cup_{i\in \N} \mathcal{O}_i$ as well as in a countable union of rescaled copies of $ \Sigma$, namely on a set of measure $\frac{1}{2}\mathscr{L}^1( \Sigma) + \frac{1}{2}$.
Since $\rho$ and $\bs{v}$ are fixed points, we deduce that $\mathscr{L}^1(\Sigma) \ge  \frac{1}{2}\mathscr{L}^1(\Sigma) + \frac{1}{2}$, hence $\mathscr{L}^1(\Sigma) = 1$. 

Since $\rho \geq 0$ is a distributional solution of the continuity equation compactly supported in space, its space integral, which coincides with the $L^1$ norm, is constant in time, showing in particular improved integrability of $\rho\in L^\infty_t L^1_{\bs{x}}$.

\section{Nonuniqueness in $L^\infty_t L^r_{\bs{x}}$} 
\label{sec: outline}









The nonunique solution  $\rho \in L^\infty_t L^1_{\bs{x}}$ constructed in Section \ref{sec: p=1} does not belong to $L^\infty_t L^r_{\bs{x}}$ for any $r>1$. Indeed, by mass conservation and nonnegativity of $\rho$, 
$
    \| \rho(t,\cdot)\|_{L^1} = 1$ 
for all $t\in [0,1]$,
and the measure of the support of $\rho(t, \cdot)$ shrinks to zero as $t\to 1$. 
This observation is consistent with plots in figure \ref{fig: plot Lr norm density L1 consturction}.

The key idea to gain more integrability in the density field  is to avoid concentration all at the same time in space with a heterogeneous-in-space construction. We also need to change our initial density $\rho^{\rm in}$;
in the $L^1$ construction, a single cube divides into $2^d$ smaller cubes, in the $L^r$ construction, a single cube breaks into $2^{\eta d}$ smaller cubes, where $\eta \in \mathbb{N}$ is a new parameter to be fixed later. 

\subsection{Description of the asynchronous evolution}

Figure \ref{fig: density evolution Lp} shows a few snapshots of the density solution we have in mind. These snapshots are arranged in an increasing time order going from $t=0$ to $t=1$. For the time being, we do not worry about the precise times of these snapshots and rather focus on the essence of the density evolution. In the frame (a) (at $t=0$), the density field concentrates on $2^{\eta d}$ cubes each of size $\frac{1}{2^{\eta + \nu}}$, just as in the $L^1$ construction with $\eta = 1$. However, unlike the $L^1$ case, at a given time, only one of the cube out $2^{\eta d}$ cubes breaks into $2^{\eta d}$ smaller cubes each of size $\frac{1}{2^{2(\eta + \nu)}}$. We call this as \textit{asynchronous breaking} of cubes. The idea of asynchronous breaking of cubes does not just stop at the first generation but carries over to all generations. For instance, frame (d) shows asynchronous breaking at the second generation. 
 Once a cube of $i$th generation breaks into $2^{\eta d}$ little cubes, we evenly spread the little cubes over the $i$th generation dyadic cube using the building block vector field from section \ref{sec: building block}.

\bigskip

 Continue applying the procedure of asynchronous breaking followed by spreading of the cubes leads to the emergence of Lebesgue density of magnitude one in a localized portion the domain, which then steadily proliferates to other parts eventually covering the whole domain. This is in contrast with the $L^1$ case where the density becomes unity only at the final instance $t = 1$.




\subsection{The sharp range: Heuristic}

 Above in Section~\ref{sec: p=1}, we constructed $b\in L^\infty_t W^{1,p}_x$ and a density $ \rho \in L^\infty_t L^1_x$ with the following structure. At almost every time $t\in (0,1)$, there is an integer $i \geq 1$ such that for $\eta=1$ 
\begin{itemize}
\item
$\rho(t, \cdot )$ is concentrated on the disjoint union of $2^{\eta di}$ cubes of size $2^{-(\eta+\nu)i}$. On each cube, $\rho(t, \cdot )$ assumes the value $2^{d \nu i}$.

\item $ b(t, \cdot )$ the sum of $2^{\eta di}$ building block vector fields with disjoint supports of size comparable to  $2^{-(\eta+\nu)i}$, and magnitude roughly $2^{(\beta-\eta)i}$ (so that they can translate a cube of side $2^{-(\eta+\nu)i}$ at distance $2^{-\eta}$ in time $t=2^{-\beta}$).
\end{itemize}
In order to reach the sharp  integrability bound of Theorem \ref{Intro: main thm}, we perform an asyncronizazion as follows. 
At almost every time $t\in (0,1)$, the support of the density is the disjoint union of cubes of different sizes, but only $2^{\eta d}$ of them with a fixed size $2^{-(\eta+\nu)i}$ are moving. However, their speed increases to $2^{d\eta i}$, so that the total time needed to move all the cubes of size $2^{-(\eta+\nu)i}$ is unchanged. Therefore, our new construction will have the following two properties at almost every time $t$:

 \begin{figure}[H]
\centering
 \includegraphics[scale = 0.37]{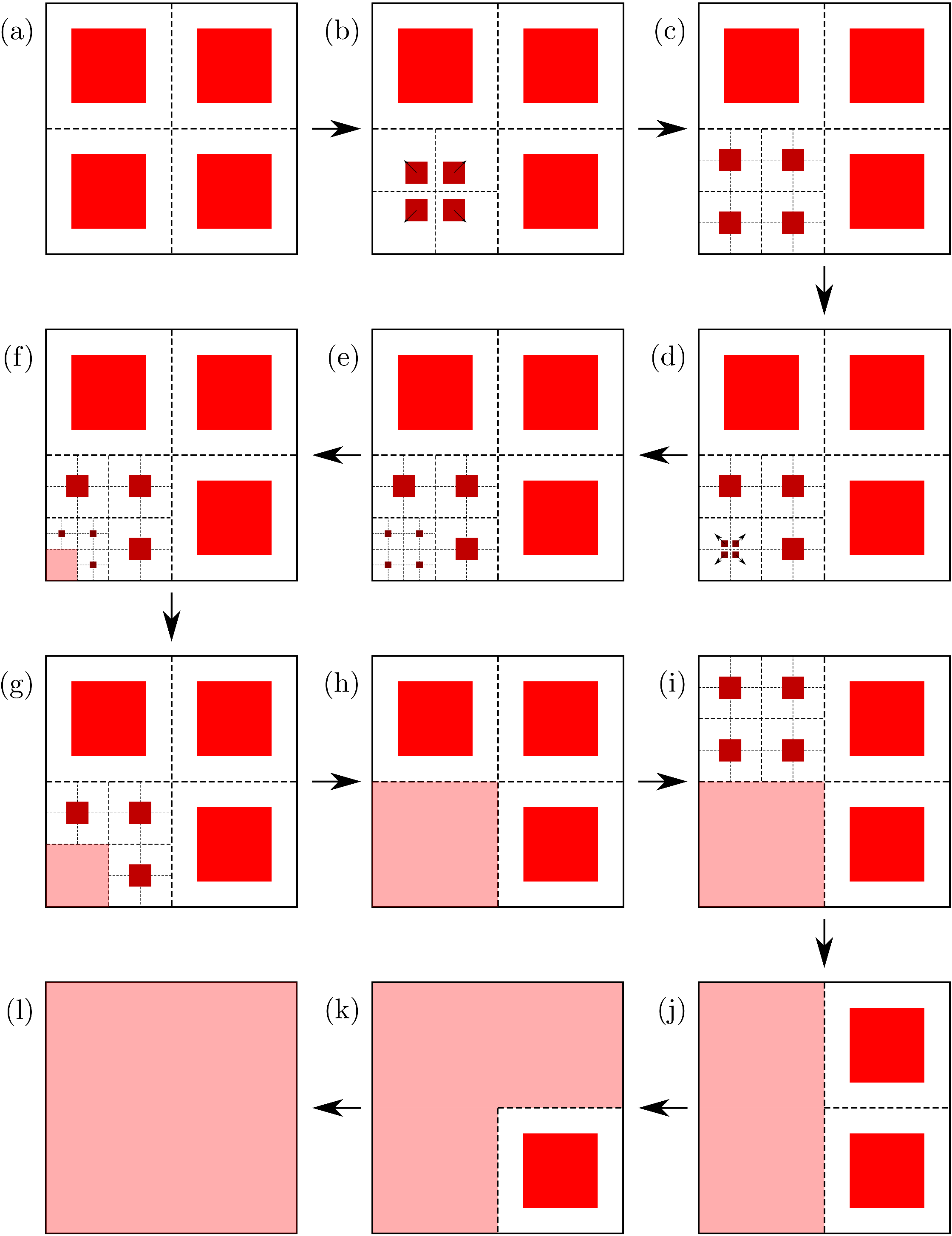}
 \caption{Panel (a) to (l) illustrates the evolution of the proposed density solution based on the heterogeneous construction described in the main text. The arrows between the panels denote the direction of the time. The figure focuses on the nature of the density evolution rather than specifying the precise time instance of each snapshot. Going from panel (a) to (b) and from (h) to (i) shows the asynchronous breaking of the first generation cubes and going from panel (c) to (d) shows the asynchronous breaking of the second generation cubes.}
 \label{fig: density evolution Lp}
\end{figure}

\begin{itemize}
\item
the support of $\rho(t, \cdot)$ is the union of cubes of different sizes. For every $i\in \N$, there are at most $2^{\eta d}$ cubes of size $2^{-(\eta+\nu)i}$, and on these cubes the constant density is $2^{d \nu i}$.

\item There exists $i \in \N$, such that $b(t, \cdot)$ is the sum of $2^{\eta d}$ building block vector fields with disjoint supports of size comparable to $2^{-(\eta+\nu)i}$, with magnitude $2^{(\beta-\eta + d\eta)i}$.
\end{itemize}

While the precise modification of the construction is technical, these two properties alone allow us to quickly compute the $L^r$ norm of the density and the $W^{1,p}$ norm of the velocity field for a.e. $t\in [0,1]$: we have
  \begin{align}
 \norm{\rho(t, \cdot)}_{L^r} \leq 2^{\eta d}  \sum_{i = 1}^{\infty} \frac{2^{i \nu d}}{2^{i(\eta + \nu)\frac{d}{r}}} \, ,
  \label{heuristic: Lr norm den}
 \end{align}
\begin{align}
\norm{\bs{v} (t, \cdot)}_{W^{1, p}} \lesssim  2^{i \beta} 2^{i (\eta d + \nu)} \frac{1}{2^{i(\eta + \nu) \frac{d}{p}}},
\end{align}
where $i \in \N$ depends on $t$.
Therefore, $\rho \in L^\infty_t L^r_{\bs{x}}$ and $\bs{v}  \in  L^\infty_t W^{1,p}_{\bs{x}}$ if
 \begin{align}
\frac 1 r > \frac{\nu}{\eta + \nu}, \qquad \frac 1 p \geq \frac{\eta d + \nu + \beta}{(\eta + \nu)d}.
\label{heuristic: r range}
 \end{align}
Combining the two bounds we see the appearence of the bound \eqref{cond:ip}
$$
 \frac 1 p + \frac {d-1} {dr} > \frac{(\eta  + \nu )d + \beta}{(\eta + \nu)d} \to 1 \qquad \mbox{as }\beta\to 0,
$$
and viceversa given $p$ and $r$ as in \eqref{cond:ip} it is easy to find $\beta$ small and $\nu, \eta$ such that \eqref{heuristic: r range} holds. 

The need for the parameter $\eta$ is justified by the following: we require $\nu \geq 1$ to close the fixed point argument, so that $\eta=1$ in the first inequality \eqref{heuristic: r range} would restrict $r < 2$, while the freedom in $\eta$ still allows to cover the entire range. 

\subsection{Time series}
\label{subsec: time series}
In our vector field construction, we work on the time interval $[0, 1]$. Given parameters $0 < \beta < 1$ and $\eta \in \mathbb{N}$, we define a few useful checkpoints in time. For $k \in \{1, \dots 2^{\eta d}\}$, we define 
\begin{align}
& \tau^k_1 \coloneqq \frac{(k-1)}{2^{\eta d}}, \nonumber \\
& \tau^k_2 \coloneqq \left(k - \frac{1}{2^\beta}\right) \frac{1}{2^{\eta d}}, \nonumber \\ 
& \tau^k_{mid} \coloneqq \frac{\tau^k_1 + \tau^k_2}{2}, \nonumber \\
& \tau^k_\infty = \frac{k}{2^{\eta d}}.
\end{align}
With the definition of checkpoints above, it is clear that for every $k \in \{1, \dots 2^{\eta d}\}$
\begin{align}
\tau^k_2 - \tau^k_1 = \frac{2^\beta - 1}{2^\beta} \frac{1}{2^{\eta d}} \quad 
\text{and} \quad
\tau^k_{\infty} - \tau^k_{2} = \frac{1}{2^\beta} \frac{1}{2^{\eta d}}
\end{align}
and for every $k \in \{1, \dots 2^{\eta d} - 1\}$
\begin{align}
\tau^{k+1}_1 = \tau^k_{\infty}.
\end{align}
With the help of these checkpoints, for $k \in \{1, \dots 2^{\eta d}\}$, we define a time interval, where the asynchronous action of the velocity field is concentrated only on the $k$th cube, as
\begin{align}
\mathcal{T}^k \coloneqq [\tau_1^k, \tau_{\infty}^k),
\end{align}
which is further divided into three time intervals as
\begin{align}
\mathcal{T}_{(1)}^k \coloneqq [\tau_1^k, \tau_{mid}^k], \qquad \mathcal{T}_{(2)}^k \coloneqq (\tau_{mid}^k, \tau_{2}^k), \quad
\text{and} \quad 
\mathcal{T}_{(3)}^k \coloneqq [\tau_2^k, \tau_{\infty}^k].
\end{align}

\begin{figure}[h]
\centering
 \includegraphics[scale = 0.4]{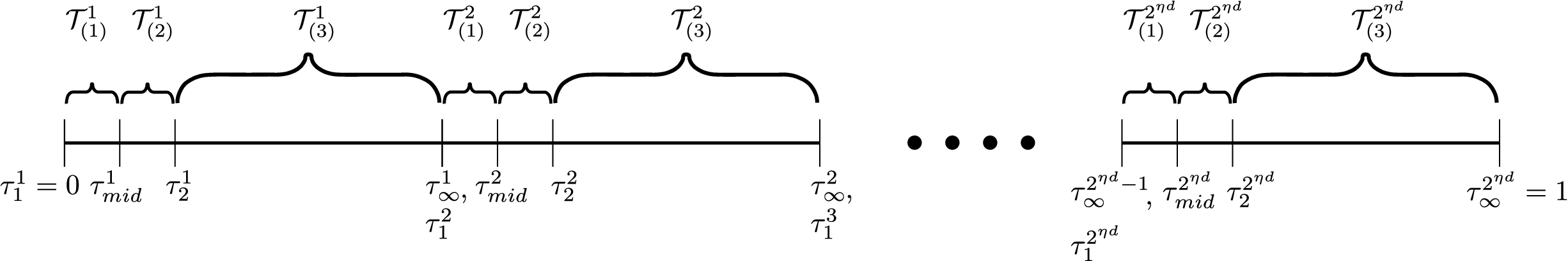}
 \caption{shows various checkpoints and time intervals defined in section \ref{subsec: time series}. The figure corresponds to $\beta = 2/3$, $\eta = 2$ and $d = 2$.}
 \label{fig: org prb}
\end{figure}


\subsection{The new fixed point argument}
\label{subsec: new fixed point}

The way we execute the fixed point argument to construct the vector field $\bs{v}$ and density $\rho$ is different in the $L^r$ case. In this section, we first explain the need to have a new fixed-point argument and then lay out the details of how this new argument works.   

 \begin{figure}[h]
\centering
 \includegraphics[scale = 0.28]{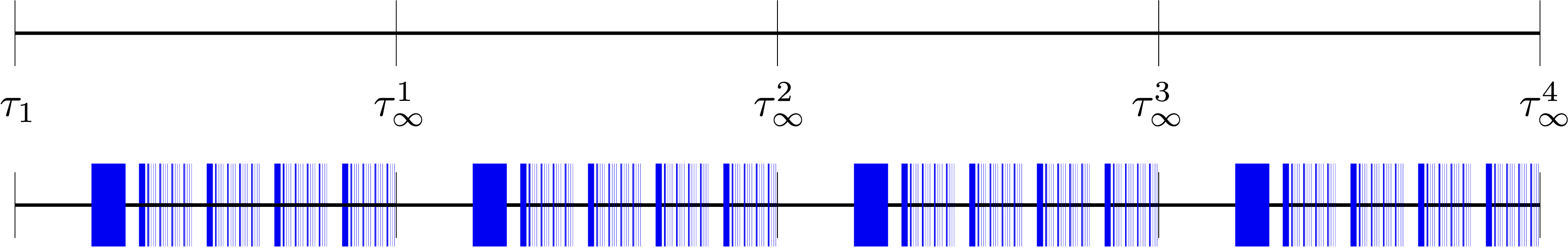}
 \caption{The blue lines show the time intervals where we would be required to explicitly define vector field (resp. the density field) in our fixed-point relation. 
}
 \label{fig: available definition Lp}
\end{figure}

We could not apply the fixed point argument the same way as in the $L^1$ case here. The difficulty is not a technical one. Rather, adapting the $L^1$ approach here makes the argument quite intricate. To illustrate the difficulty, imagine a tree of infinite depth whose root corresponds to the first-generation cubes, the second-level children correspond to the second-generation cubes, and so on. In the context of this tree, what we did in the $L^1$ case is akin to a breadth-first search; we translated all the first-generation cubes first, all the second-generation cubes after that, and so on. 

\begin{figure}[h]
\centering
 \includegraphics[scale = 0.4]{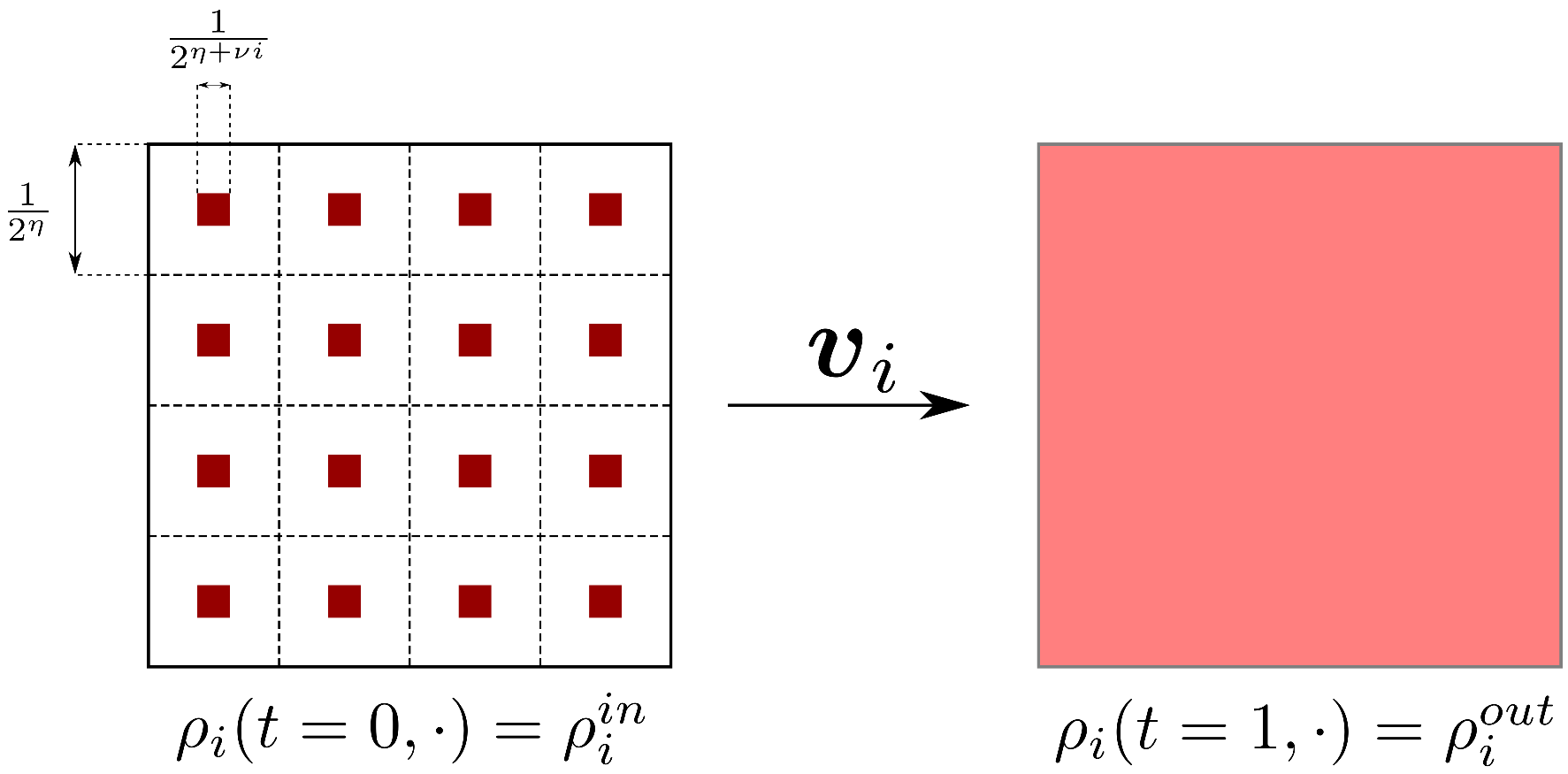}
 \caption{shows the initial density $\rho^{in}_i$ and final density $\rho^{out}_i$ that the solution of the transport equation $\rho_i$ (corresponding to the vector field $\bs{v}_i$) satisfies. At time $t = 0$, the density $\rho_i$ concentrates on $2^{\eta d}$ cubes of size $\frac{1}{2^{\eta + \nu i}}$ with magnitude $2^{\nu d i}$. At time $t = 1$, the density is the characteristic function of the unit cube. This figure corresponds to $i = 3$, $\nu = 2/3$, $\eta = 2$, $d = 2$.}
 \label{fig: org prb}
\end{figure}

This is also clear from figure \ref{fig: available definition of w}{\color{blue}(b)} which shows the time marker for the translation of cubes of different sizes. In this figure, the decreasing thickness of the blue line corresponds to the increasing generation of cubes. In this analogy, the construction in the $L^r$ case is akin to a depth-first search, where we move only one of the first-generation cubes first, and then only after translating all its descendent cubes, can we move the next first-generation cube. Similar time markers for the $L^r$ case are shown in figure \ref{fig: available definition Lp}. From this figure, the difficulty now becomes apparent. We will be required to define the vector field  explicitly using rescaled copies of the building block on a jumbled mess of time intervals (shown in blue in figure \ref{fig: available definition Lp}) corresponding to the motion of different generations of cubes. In addition, we will have to define the vector field implicitly in the remaining gaps (through rescaled copies depending on the gap size), which are mixed up as well.  This requires to find a new, suitable setup for the contraction. 

\bigskip

We overcome this difficulty by considering a fixed point argument in appropriate spaces of sequences of vector fields $\{\bs{v}_i\}_{i=1}^{\infty}$ and densities $\{\rho_i\}_{i=1}^{\infty}$. For a given $i \in \mathbb{N}$, $\bs{v}_i$ and $\rho_i$ solves the transport equation:
\begin{align}
\partial_t \rho_i + \bs{v}_i \cdot \nabla \rho_i = 0.
\end{align}
The different density solutions $\rho_i$ only differ in the initial conditions:



\begin{align}\label{eq:initial dens}
    \rho_i^{out}(x) \coloneqq 1_{\ol{Q}(\bs{0}, 1)}(x) \, ,
    \quad\quad 
    \rho^{in}_i \coloneqq \sum_{k= 1}^{2^{\eta d}} \rho^{c}_i(\bs{x} - \bs{c}^\eta_{k}) \, ,
    \quad \quad
    \rho_i^c \coloneqq 2^{\nu d i} 1_{\ol{Q}\left(\bs{0}, \frac{1}{2^{\eta + \nu i}}\right)} \, .
\end{align}

\begin{figure}[H]
\centering
 \includegraphics[scale = 0.25]{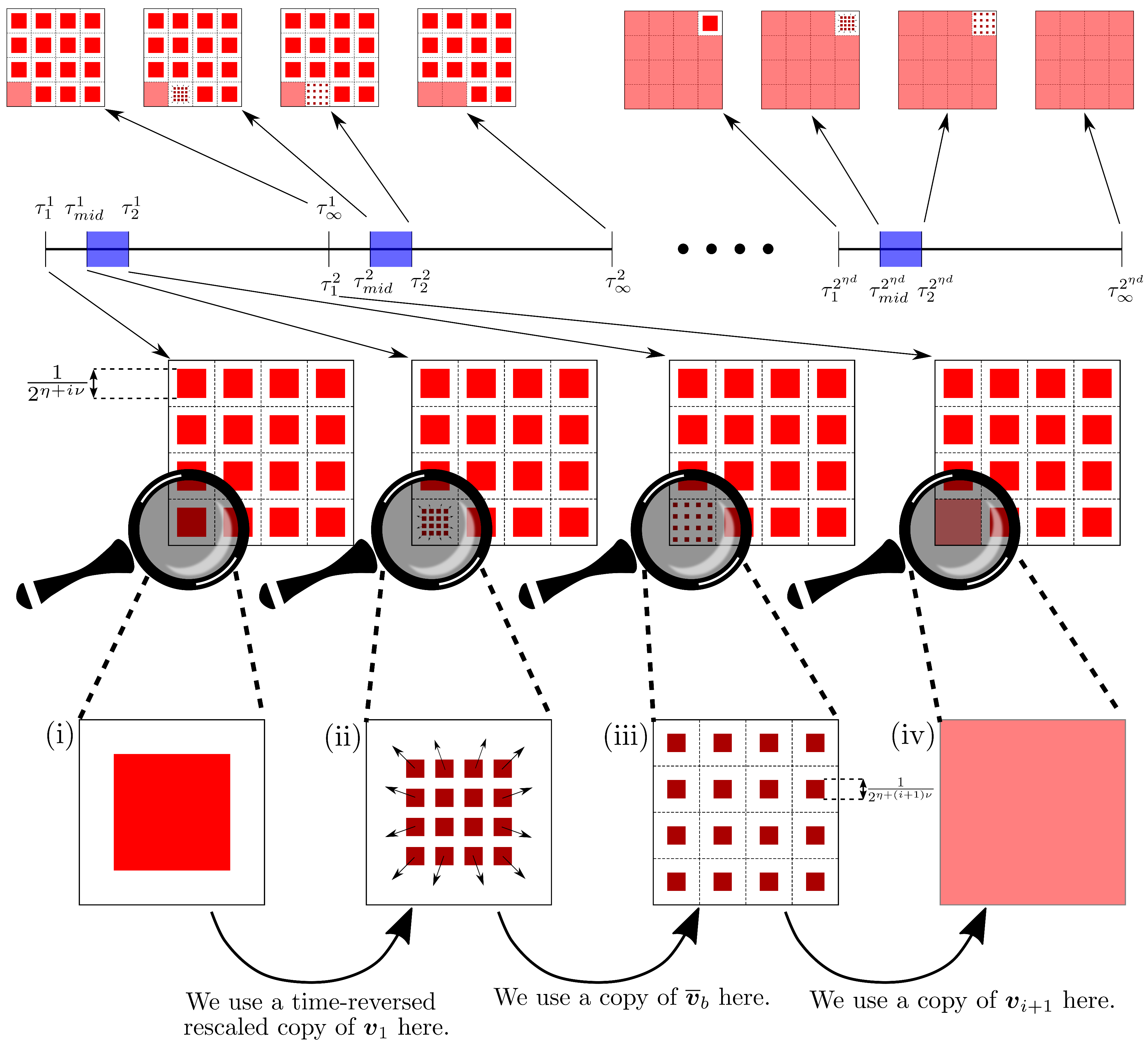}
 \caption{The figure shows the density $\rho_i$ at various checkpoints defined in section \ref{subsec: time series}. At time $\tau^1_1$, the density concentrates on $2^{\eta d}$ cubes each of size $\frac{1}{2^{\eta + i \nu}}$. All these cubes progressively (but independently in time) acquire Lebesgue density such that at $\rho_i(t = 1)=1_{\ol{Q}(0, 1)}$, the density becomes the characteristic function of the unit cube centered at zero. The second row shows how the left bottom most cube acquires the Lebesgue density on the time interval  $\mathcal{T}^1$. The panels (i), (ii), (iii) and (iv) shows zoomed in version (by a scale factor $2^{\eta}$) of the left bottom cube at time $\tau^1_1$, $\tau^1_{mid}$, $\tau^1_2$, $\tau^1_\infty$ respectively. We see that the red cubes in (ii) are $\frac{1}{2^{\eta + \nu}}$ factor smaller than the red cube in (i). Therefore, we use the time-reversed rescaled copy of $\bs{v}_1$ in going from (i) to (ii). More generally, we do this on time intervals $\mathcal{T}^k_{(1)}$. From (ii) to (iii), we use rescaled copies of the building block vector field from section \ref{sec: building block}. The cubes in (iii) are a $\frac{1}{2^{\eta + (i+1) \nu}}$ factor smaller than the external white cube in (iii) which transforms into Lebesgue density in (iv). Naturally, we use rescaled copy of $\bs{v}_{i+1}$ form (iii) to (iv).}
 \label{fig: v1 depend on v2}
\end{figure}

Now we give a brief overview of the fixed-point relation $\mathcal{F}$ whose input and output is a sequence of vector fields. A similar description also works for the density field. We define $\mathcal{F} = \{\mathcal{F}_i\}_{i=1}^{\infty}$ in such a way that on the time intervals $\mathcal{T}^k_{(1)}$, $\mathcal{F}_i(\{\bs{v}_i\}_{i=1}^{\infty})$ is implicitly defined as a rescaled copy of $\bs{v}_1$. On the intervals $\mathcal{T}^k_{(2)}$, $\mathcal{F}_i(\{\bs{v}_i\}_{i=1}^{\infty})$ is explicitly defined as a rescaled copy of the building block vector field $\ol{\bs{v}}_b$. Finally, on the time intervals $\mathcal{T}^k_{(3)}$, $\mathcal{F}_i(\{\bs{v}_i\}_{i=1}^{\infty})$ is implicitly defined as a rescaled copy of $\bs{v}_{i+1}$. It is only in the third time interval where the construction of $\bs{v}_i$ depends on the next vector field in the sequence, namely, $\bs{v}_{i+1}$. Figure \ref{fig: v1 depend on v2} illustrates this point, namely, how the construction of $\bs{v}_{i}$ depends on $\bs{v}_{i+1}$. 

\bigskip

Once the fixed point relation is defined, we show that the desired contraction happens in $\ell^\infty_{\gamma_1} C_t W^{1, p}_{\bs{x}}$ space (see section \ref{subsec:faf} for the exact definition). Here, $\ell^\infty_{\gamma_1}$ is weighted version of the $\ell^\infty$ space (see \ref{def: weighted l infinity}).

\bigskip


Recall, from section \ref{subsec: L1 distinguished points}, $\bs{c}_k^\eta$ are the centers of the cubes in the subdivision of $[-\frac{1}{2}, \frac{1}{2}]^d$ into cubes of side $2^{-\eta}$. As $\eta$ will be a fixed quantity in the following sections, we make a slight notational change and drop $\eta$ from the superscript, using $\bs{c}_k$ to mean $\bs{c}_k^\eta$ in rest of the sections.

\section{Construction of the vector field $\bs{v}$ as a Banach fixed point}
\label{sec: vector field }


\subsection{The functional analytic framework}\label{subsec:faf}

Let $p\ge 1$ and $\gamma_1 \in \mathbb{R}$ to be chosen later.
We denote by $\ell^{\infty}_{\gamma_1}(\mathbb{N})$ the weighted $\ell^\infty$ space 
endowed with the norm
\begin{align}
\norm{\{a_i\}_{i \in \mathbb{N}}}_{\ell^{\infty}_{\gamma_1}} \coloneqq \sup_{i \in \mathbb{N}} 2^{-\gamma_1 i}|a_i|\, .
\label{def: weighted l infinity}
\end{align}
Next, we define a set $\mathcal{X}$ of infinite sequences of vector fields as
\begin{align}
\mathcal{X} \coloneqq \Bigg\{ \{\bs{w}_i\}_{i \in \mathbb{N}} \in \ell^\infty_{\gamma_1}\left(\mathbb{N}; C([0, 1]; \dot W^{1, p}(\mathbb{R}^d, \mathbb{R}^d))\right)  \; \Bigg| \qquad \qquad \qquad \qquad \qquad \qquad \qquad \qquad \nonumber \\
\nabla \cdot \bs{w}_i \equiv 0, \; \bs{w}_i(0, \cdot) = \bs{w}_i(1, \cdot) \equiv \bs{0},  \; \supp_{\bs{x}} \bs{w}_i(t, \cdot) \subseteq \left[-\frac{1}{2}, \frac{1}{2}\right]^d \Bigg\},
\end{align}
equipped with the metric 
\begin{equation}
    \norm{\{\bs{w}_i\}_{i \in \mathbb{N}}}_{\ell^\infty_{\gamma_1} C_t \dot W^{1, p}_{\bs{x}}}
    = \sup_{i\in \mathbb N} 2^{-\gamma_1 i} \| \bs{w}_i \|_{C_t \dot W^{1,p}_x}\, ,
\end{equation}
where the homogeneous Sobolev norm is introduced in \eqref{eq:norms}. As observed in Remark \ref{rmk:L1 int}, each component of $\{\bs{w}_i\}_{i\in \N} \in \mathcal{X}$ automatically belongs to $L^p(\R^n)$.

\subsection{The iteration map}

Having in mind the description of the iteration step in subsection \ref{subsec: new fixed point}, we define the map $\mathcal{F}: \mathcal{X} \to \mathcal{X}$ whose fixed point will be the sought-after sequence of velocity fields.

\bigskip

We fix $\{\bs{w}_i\}_{i \in \mathbb{N}} \in \mathcal{X}$, $k \in \{1 , \dots , 2^{\eta d}\}$, and $i\in \mathbb{N}$. The $i$-th component of $\mathcal{F}(\{\bs{w}_i\}_{i \in \mathbb{N}})$ in the time interval $\mathcal{T}^k = [\tau_1^k, \tau_1^{k+1}]$ is defined separately on the three sub-intervals $\mathcal{T}^k_{(j)}$, $j=1,2,3$:

\begin{subequations}

\begin{itemize}

\item[(i)]  When $t \in \mathcal{T}^k_{(1)}$, 
\begin{align}
\mathcal{F}_i(\{\bs{w}_i\}_{i \in \mathbb{N}})(t, \bs{x}) 
\coloneqq 
\frac{1}{2^{\eta + \nu i}} \frac{1}{\tau^k_{mid} - \tau_1^k} \bs{w}_1 \left(\frac{\tau_{mid}^k - t}{\tau_{mid}^k - \tau^k_1},  2^{\eta + \nu i}(\bs{x} - \bs{c}_k)\right) \, .
\end{align}

\item[(ii)] When $t \in \mathcal{T}^k_{(2)}$, 
\begin{align}
\mathcal{F}_i(\{\bs{w}_i\}_{i \in \mathbb{N}})(t, \bs{x}) 
\coloneqq 
\frac{1}{2^{\eta}} \frac{1}{\tau^k_{2} - \tau_{mid}^k} \ol{\bs{v}}_b \left(\frac{t - \tau_{mid}^k}{\tau_{2}^k - \tau^k_{mid}},  2^{\eta}(\bs{x} - \bs{c}_k); \eta, \frac{1}{2^{\nu i}}, \frac{1}{2^{ \nu (i+1)}}\right) \, .
\end{align}

\item[(iii)] When $t \in \mathcal{T}^k_{(3)}$, 
\begin{align}
\mathcal{F}_i(\{\bs{w}_i\}_{i \in \mathbb{N}})(t, \bs{x}) 
\coloneqq 
 \frac{1}{2^{\eta}} \frac{1}{\tau^k_{\infty} - \tau_2^k} \bs{w}_{i+1} \left(\frac{t - \tau_{2}^k}{\tau_{\infty}^k - \tau^k_2},  2^{\eta}(\bs{x} - \bs{c}_k)\right)\, .
\end{align}
\end{itemize}
\end{subequations}
\label{eqn: vector field: def Fi}
We extend $\mathcal{F}_i(\{\bs{w}_i\}_{i \in \mathbb{N}})$ continuously up to time $t=1$ by setting $\mathcal{F}_i(\{\bs{w}_i\}_{i \in \mathbb{N}})(1, \bs{x}) \coloneqq \bs{0}$.

\bigskip

Let us briefly compare the definition of the map $\mathcal{F}$ and the heuristic explanation of the iteration process provided in subsection \ref{subsec: new fixed point}.

According to \eqref{eqn: vector field: def Fi}, in the time interval $\mathcal{T}^k$, the family of velocity fields $\mathcal{F}(\{\bs{w}_i\}_{i \in \mathbb{N}})$
acts only on the $k$th red cube. This is in perfect agreement with the idea of asynchronous move of cubes.

The action of $\mathcal{F}(\{\bs{w}_i\}_{i \in \mathbb{N}})$ results from three different sub-actions. In the first sub-interval $\mathcal{T}_{(1)}^k$, the vector field $\mathcal{F}_i(\{\bs{w}\}_{i \in \mathbb{N}})$ is defined using a reversed, rescaled copy of $w_1$. In the second interval $\mathcal{T}_{(2)}^k$, we utilize the forward blob field, appropriately rescaled. In the final interval $\mathcal{T}_{(3)}^k$, we employ the velocity field $\bs{w}_{i+1}$ forward in time. Specifically, if $\bs{w}_i$ is the vector field capable of moving the configuration with $2^{\eta d}$ squares, each with a side length of $2^{-\eta- i \nu}$, to match the Lebesgue measure, then $\mathcal F$ is going to fix $\{\bs{w}_i\}_{i\in \mathbb N}$, exactly as described in  subsection \ref{subsec: new fixed point}.

In particular, if we can prove the existence of a unique fixed point $\{w_i\}_{i\in \mathbb N}$ for the map $\mathcal{F}$, then $w_1$ would be the vector field we are looking for.

\begin{proposition}
Let $\mathcal{F}$ defined as in (\ref{eqn: vector field: def Fi}). Then given $d \geq 2$, $\eta \in \mathbb{N}$, $0 < \beta < 1$ and $\nu > \nu_0 \coloneqq 1 + \beta - \log_2(2^\beta - 1)$ there exists a $\gamma_1 \in \mathbb{R}$ such that $\mathcal{F}: \mathcal{X} \to \mathcal{X}$ is a contraction for
\begin{align}\label{eq: p range}
p < \frac{(\eta + \nu) d}{\eta d + \nu + \beta} \, .
\end{align}
\label{vector field: main prop}
\end{proposition}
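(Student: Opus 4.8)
\textbf{Proof strategy for Proposition \ref{vector field: main prop}.}

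The plan is to mimic the structure of the $L^1$ argument, but working in the weighted sequence space $\ell^\infty_{\gamma_1}(\mathbb N; C_t \dot W^{1,p}_{\bs x})$. First I would verify the structural constraints: for each fixed $\{\bs w_i\}_{i\in\mathbb N}\in\mathcal X$, the three pieces in \eqref{eqn: vector field: def Fi} are divergence-free (rescaling and translation preserve $\nabla\cdot=0$, and Proposition \ref{prop: building block vector field} gives it for $\ol{\bs v}_b$), are supported in $\ol Q(\bs c_k, 2^{-\eta})\subseteq \ol Q(0,1)$, vanish at $t=0,1$, and glue continuously at the interfaces $\tau^k_1,\tau^k_{mid},\tau^k_2,\tau^k_\infty$. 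Continuity at the interfaces follows because $\bs w_1,\bs w_{i+1}$ vanish at their temporal endpoints and $\supp_t\ol{\bs v}_b\subseteq[\tfrac13,\tfrac23]$, exactly as in the $L^1$ case; I would also note that the $k$-intervals tile $[0,1]$.

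The core is the $C_t\dot W^{1,p}_{\bs x}$ estimate on each sub-interval. By the scaling of the homogeneous Sobolev seminorm under $\bs x\mapsto \lambda(\bs x - \bs c_k)$ and $\bs w\mapsto \mu\bs w$, a term of the form $\mu\,\bs w(\,\cdot\,,\lambda(\bs x-\bs c_k))$ has $\dot W^{1,p}$-norm $\mu\lambda^{1-d/p}\|\bs w\|_{\dot W^{1,p}}$. Applying this:
\begin{itemize}
\item[(i)] On $\mathcal T^k_{(1)}$, with $\mu=\frac{1}{2^{\eta+\nu i}}\frac{1}{\tau^k_{mid}-\tau^k_1}$, $\lambda=2^{\eta+\nu i}$, and using $\tau^k_{mid}-\tau^k_1 = \tfrac12\frac{2^\beta-1}{2^\beta}2^{-\eta d}$, one gets a bound $\lesssim_{\beta} 2^{\eta d}\,2^{(\eta+\nu i)(1-d/p)}\,\|\bs w_1\|_{C_t\dot W^{1,p}_{\bs x}}$.
\item[(ii)] On $\mathcal T^k_{(2)}$, $\mathcal F_i(\{\bs w\})=\mathcal F_i(\{\tilde{\bs w}\})$, so this piece does not enter the contraction estimate; for the mapping-into-$\mathcal X$ part, Proposition \ref{prop: building block vector field} with parameters $(\eta,2^{-\nu i},2^{-\nu(i+1)})$ and the scaling $\mu=\frac{1}{2^\eta}\frac{1}{\tau^k_2-\tau^k_{mid}}$, $\lambda=2^\eta$ gives a bound of the form $C(\nu,d,p,\eta)\,2^{\eta d}\,2^{\nu i(\ldots)}$.
\item[(iii)] On $\mathcal T^k_{(3)}$, with $\mu=\frac{1}{2^\eta}\frac{1}{\tau^k_\infty-\tau^k_2}$, $\lambda=2^\eta$, $\tau^k_\infty-\tau^k_2=2^{-\beta}2^{-\eta d}$, one gets $\lesssim_{\beta} 2^\eta\,2^{\eta d}\,2^{\eta(1-d/p)}\,\|\bs w_{i+1}\|_{C_t\dot W^{1,p}_{\bs x}}$.
\end{itemize}
Now I multiply by the weight $2^{-\gamma_1 i}$ and take the supremum over $i$ and $k$. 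Piece (i) contributes (in terms of the $\mathcal X$-norm of the input) a factor $\sup_i 2^{-\gamma_1 i}2^{\gamma_1}2^{\nu i(1-d/p)}\cdot(\text{const})$, i.e. the $i$-dependence is $2^{i(\nu(1-d/p)-\gamma_1+\gamma_1)}$ — more carefully, since $\|\bs w_1\|\le 2^{\gamma_1}\|\{\bs w\}\|_{\mathcal X}$, the $i$-power is $2^{i(-\gamma_1+\nu(1-d/p))}$, which is summably/boundedly small provided $\gamma_1 > \nu(1-d/p)$, equivalently (using $p<d$) this is a genuine constraint tying $\gamma_1$ to $\nu,d,p$. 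Piece (iii) contributes $2^{-\gamma_1 i}\cdot 2^{\gamma_1(i+1)}=2^{\gamma_1}$ times a constant independent of $i$ — this is the term that forces $\gamma_1$ to be chosen so that $2^{\gamma_1}\cdot C(\beta,\eta,d,p)<1$, i.e. $\gamma_1$ must be sufficiently negative, while piece (i) wants $\gamma_1$ large. Reconciling these two opposite demands is the heart of the matter, and it is exactly here that the hypothesis \eqref{eq: p range} enters: the window for $\gamma_1$ is nonempty precisely when $\nu(1-d/p) < -\,(\text{the exponent from (iii)})$ after optimizing, which unwinds to $p(\eta d+\nu+\beta) < (\eta+\nu)d$. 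I would record the admissible interval for $\gamma_1$ explicitly and check it is nonempty under \eqref{eq: p range} and $\nu>\nu_0$, the role of $\nu_0=1+\beta-\log_2(2^\beta-1)$ being to make the worst-case constant (at small $i$, or from the $2^\beta-1$ factors) strictly below $1$, just as \eqref{F Holder cond. 2 nu} did in Section \ref{sec: p=1}.

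Finally, contractivity: since $\mathcal F$ is affine in $\{\bs w_i\}$ (each piece is linear in either $\bs w_1$ or $\bs w_{i+1}$, plus the fixed building-block term on $\mathcal T^k_{(2)}$ which cancels in differences), the estimate for $\|\mathcal F(\{\bs w\})-\mathcal F(\{\tilde{\bs w}\})\|_{\mathcal X}$ is literally the above bound applied to $\{\bs w_i-\tilde{\bs w}_i\}$ with the building-block pieces dropped, so the same choice of $\gamma_1$ yields a Lipschitz constant $<1$. I expect the main obstacle to be bookkeeping the two competing constraints on $\gamma_1$ — the ``$\bs w_1$ feeds into level $i$'' term (wanting $\gamma_1$ large) versus the ``$\bs w_{i+1}$ feeds into level $i$'' term (wanting $\gamma_1$ small, in fact $2^{\gamma_1}<$ some explicit constant $<1$) — and showing their intersection is nonempty is exactly what \eqref{eq: p range} encodes; once $\gamma_1$ is pinned down, Banach's fixed point theorem on the complete metric space $\mathcal X$ concludes, and $\bs w_1$ is the desired vector field with, by the piece-(i)/(iii) bounds, $\bs v = \bs w_1 \in C([0,1];\dot W^{1,p})$, the Hölder regularity being obtained by an interpolation estimate parallel to \eqref{L1 C1alpha estimate F} which I would carry out in the same spaces (adding a $C^\alpha_{t,\bs x}$ component to the norm exactly as in Section \ref{sec: p=1}).
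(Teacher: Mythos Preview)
Your proposal is correct and follows essentially the same route as the paper: verify the structural constraints, compute the $\dot W^{1,p}$ scaling on each of $\mathcal T^k_{(1)},\mathcal T^k_{(2)},\mathcal T^k_{(3)}$, and then balance the two competing constraints on $\gamma_1$ coming from the $\bs w_1$-term (lower bound $\gamma_1\ge\nu(1-d/p)$) and the $\bs w_{i+1}$-term (upper bound from $2^{\gamma_1}\cdot 2^{\eta d-\eta d/p+\beta}<1$). The paper simply picks the endpoint $\gamma_1=\nu(1-d/p)$ rather than describing an interval, and then the contraction condition becomes exactly \eqref{eq: p range}; the role of $\nu>\nu_0$ is, as you say, to ensure the $i=1$ contribution from piece~(i) (carrying the $(2^\beta-1)^{-1}$ factor) does not dominate.

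Two minor remarks. First, your piece-(iii) bound has stray powers of $2^\eta$: with $\mu=2^{-\eta}(\tau^k_\infty-\tau^k_2)^{-1}$ and $\lambda=2^\eta$ one gets $\mu\lambda^{1-d/p}=(\tau^k_\infty-\tau^k_2)^{-1}2^{-\eta d/p}=2^{\beta+\eta d-\eta d/p}$, not $2^\eta\cdot 2^{\eta d}\cdot 2^{\eta(1-d/p)}$; this is just arithmetic and does not change the argument. Second, the paper's $\mathcal X$ in this section carries only the $C_t\dot W^{1,p}_{\bs x}$ norm, with no $C^\alpha_{t,\bs x}$ component, so your final paragraph about adding a H\"older piece is extra work not needed for the proposition as stated.
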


\begin{proof}

We verify the boundary conditions and support of $\mathcal{F}_i(\{\bs{w}_{i^\prime}\}_{i^\prime \in \mathbb{N}})$ for any  $\{\bs{w}_{i^\prime}\}_{i^\prime \in \mathbb{N}} \in \mathcal{X}$. It is clear from the definition of $\mathcal{F}$ that for any $i \in \mathbb{N}$, we have
\begin{align}
\nabla \cdot \mathcal{F}_i(\{\bs{w}_{i^\prime}\}_{i^\prime \in \mathbb{N}}) \equiv 0, \quad \text{and} \quad \mathcal{F}_i(\{\bs{w}_{i^\prime}\}_{i^\prime \in \mathbb{N}})(0, \cdot) = \mathcal{F}_i(\{\bs{w}_{i^\prime}\}_{i^\prime \in \mathbb{N}})(1, \cdot) \equiv \bs{0} \, ,\quad \nonumber 
\end{align}
and that 
\begin{align}
\supp_{\bs{x}} \mathcal{F}_i(\{\bs{w}_{i^\prime}\}_{i^\prime \in \mathbb{N}})(t, \cdot) \subseteq \left[-\frac{1}{2}, \frac{1}{2}\right]^d \, .
\end{align}

We now estimate the $\dot W^{1,p}$-norm in space of $\mathcal{F}_i(\{\bs{w}_{i^\prime}\}_{i^\prime \in \mathbb{N}})$.
For $k \in \{1, \dots ,2^{\eta d}\}$ and $t \in \mathcal{T}^k_{(1)}$, a straightforward calculation shows that
\begin{align}
 \norm{\mathcal{F}_i(\{\bs{w}_{i^\prime}\}_{i^\prime \in \mathbb{N}})(t, \cdot) - \mathcal{F}_i(\{\wt{\bs{w}}_{i^\prime}\}_{i^\prime \in \mathbb{N}})(t, \cdot)}_{\dot W^{1, p}} 
&\leq 
\frac{1}{\tau^k_{mid} - \tau^k_1} \left(\frac{1}{2^{\eta + \nu i}}\right)^{\frac{d}{p}} \norm{\bs{w}_1-\wt{\bs{w}}_1}_{C_t \dot W^{1, p}_{\bs{x}}}, \nonumber \\
&\hspace{-10em}
\leq \frac{2^{\gamma_1}}{\tau^k_{mid} - \tau^k_1} \left(\frac{1}{2^{\eta + \nu i}}\right)^{\frac{d}{p}} \norm{\{\bs{w}_{i^\prime}\}_{i^\prime \in \mathbb{N}} - \{\wt{\bs{w}}_{i^\prime}\}_{i^\prime \in \mathbb{N}}}_{\ell^\infty_{\gamma_1} C_t \dot W^{1, p}_{\bs{x}}}.
\label{contraction: Fi est Tau 1}
\end{align}
When $t \in \mathcal{T}^k_{(2)}$, $\mathcal{F}_i(\{\bs{w}_{i^\prime}\}_{i^\prime \in \mathbb{N}})(t, \cdot) - \mathcal{F}_i(\{\wt{\bs{w}}_{i^\prime}\}_{i^\prime \in \mathbb{N}})(t, \cdot)\equiv 0$ and, by the properties of $\ol{\bs{v}}_b$ from Proposition \ref{prop: building block vector field}, we get
\begin{align}
\norm{\mathcal{F}_i(\{\bs{w}_{i^\prime}\}_{i^\prime \in \mathbb{N}})(t, \cdot)}_{\dot W^{1, p}} \leq \frac{1}{\tau^k_{2} - \tau^k_{mid}} \left( \frac{1}{2^{\eta}}\right)^{\frac{d}{p}} \norm{\ol{\bs{v}}_b}_{C_t \dot W^{1, p}_{\bs{x}}}, \nonumber \\
\leq C  \frac{1}{\tau^k_{2} - \tau^k_{mid}} \left(
\frac{1}{2^{2\eta + \nu(i+1)}}\right)^{\frac{d}{p} - 1}.
\label{contraction: Fi est Tau 2}
\end{align}

When $t \in \mathcal{T}^k_{(3)}$, then we get
\begin{align}
&\norm{\mathcal{F}_i(\{\bs{w}_{i^\prime}\}_{i^\prime \in \mathbb{N}})(t, \cdot)- \mathcal{F}_i(\{\wt{\bs{w}}_{i^\prime}\}_{i^\prime \in \mathbb{N}})(t, \cdot)}_{\dot W^{1, p}} =
\norm{\mathcal F_i(\{\bs{w}_{i^\prime}-\wt{\bs{w}}_{i^\prime}\}_{i^\prime \in \mathbb{N}})(t, \cdot)
}_{\dot W^{1, p}} 
\\& \hspace{4em}\leq 
\frac{1}{\tau^k_{\infty} - \tau^k_2} 
\left(\frac{1}{2^{\eta}}\right)^{\frac{d}{p}} \norm{\bs{w}_{i+1}- \wt{\bs{w}}_{i+1}}_{C_t \dot W^{1, p}_{\bs{x}}}, \nonumber 
\\& \hspace{4em} \leq 
\frac{2^{\gamma_1(i+1)}}{\tau^k_{\infty} - \tau^k_2} 
\left(\frac{1}{2^{\eta}}\right)^{\frac{d}{p}} \norm{\{\bs{w}_{i^\prime}\}_{i^\prime \in \mathbb{N}} - \{\wt{\bs{w}}_{i^\prime}\}_{i^\prime \in \mathbb{N}}}_{\ell^\infty_{\gamma_1} C_t \dot W^{1, p}_{\bs{x}}} .
\label{contraction: Fi est Tau 3}
\end{align}

{\bf Step 1: $\mathcal{F}$ maps $\mathcal{X}$ to $\mathcal{X}$.}
We show that $\mathcal{F}: \mathcal{X} \to \mathcal{X}$, provided
\begin{align}
\gamma_1 \geq \nu\left(1 - \frac{d}{p}\right)\, .
\label{contraction: F cond 1}
\end{align}

As the Sobolev norm of $\bs{w}_{i}(t, \cdot)$ and $\ol{\bs{v}}_b(t, \cdot)$ is continuous in time, the continuity of the Sobolev norm $\norm{\mathcal{\mathcal{F}}_i(\{\bs{w}_{i^\prime}\}_{i^\prime \in \mathbb{N}})(t, \cdot)}_{\dot W^{1, p}}$ in time is clear in the interiors of $\mathcal{T}^k_{(1)}$, $\mathcal{T}^k_{(2)}$ and $\mathcal{T}^k_{(3)}$. Now combining the definition of $\mathcal F_i$ from (\ref{eqn: vector field: def Fi}) with the information that $\bs{w}_i(0, \cdot) = \bs{w}_i(1, \cdot) \equiv \bs{0}$ and $\supp_t \ol{\bs{v}}_b \subseteq \left[\frac{1}{3}, \frac{2}{3}\right]$, we obtain the continuity of the Sobolev norm at the interfaces $\tau^k_1$, $\tau^k_{mid}$, $\tau^k_{2}$ and $\tau^k_{\infty}$.

Finally, noting estimates (\ref{contraction: Fi est Tau 1}), (\ref{contraction: Fi est Tau 2}), 
and (\ref{contraction: Fi est Tau 3}) and the continuity of the Sobolev norm in time gives
\begin{align}
& \frac{1}{2^{\gamma_1 i}} \norm{\mathcal{F}_i(\{\bs{w}_{i^\prime}\}_{i^\prime \in \mathbb{N}})}_{C_t \dot W^{1, p}_{\bs{x}}} \nonumber \\ 
& \qquad  \le C(\beta, \eta, \nu)  \sup_{i \in \mathbb{N}} \; \max\left\{\frac{1}{2^{\gamma_1 i}} \left(\frac{1}{2^{\nu i}}\right)^{\frac{d}{p}},  \frac{1}{2^{\gamma_1 i}} \left(\frac{1}{2^{\nu i}}\right)^{\frac{d}{p} - 1}\right\} \max\left\{1, \norm{\{\bs{w}_{i^\prime}\}_{i^\prime \in \mathbb{N}}}_{\ell^\infty_{\gamma_1} C_t \dot W^{1, p}_{\bs{x}}}\right\}.
\end{align}
Therefore, $\mathcal{F}(\{\bs{w}_{i^\prime}\}_{i^\prime \in \mathbb{N}}) \in \mathcal{X}$ if
\eqref{contraction: F cond 1} is satisfied.

\bigskip

{\bf Step 2: $\mathcal{F}$ is a contraction.}
We prove that, under the assumptions of Proposition \ref{vector field: main prop}, the map $\mathcal{F} : \mathcal{X} \to \mathcal{X}$ is a contraction.


Let $\{\bs{w}_{i^\prime}\}_{i^\prime \in \mathbb{N}}, \{\tilde{ \bs{w}}_{i^\prime}\}_{i^\prime \in \mathbb{N}} \in \mathcal{X}$. 
Combining \eqref{contraction: Fi est Tau 1} and \eqref{contraction: Fi est Tau 3} yields
\begin{align}
& \frac{1}{2^{\gamma_1 i}} \norm{\mathcal F_i(\{\bs{w}_{i^\prime}\}_{i^\prime \in \mathbb{N}}) - \mathcal F_i(\{\wt{\bs{w}}_{i^\prime}\}_{i^\prime \in \mathbb{N}})}_{C_t \dot W_{\bs{x}}^{1, p}} \nonumber \\
&  \qquad \leq \sup_{i \in \mathbb{N}} \max \left\{\frac{2^{\gamma_1}}{\tau^k_{mid} - \tau^k_1} \frac{1}{2^{\gamma_1 i}} \left(\frac{1}{2^{\eta + \nu i}}\right)^{\frac{d}{p}}, \frac{2^{\gamma_1}}{\tau^k_{\infty} - \tau^k_2} 
\left(\frac{1}{2^{\eta}}\right)^{\frac{d}{p}}\right\} 
\norm{\{\bs{w}_{i^\prime}\}_{i^\prime \in \mathbb{N}} - \{\wt{\bs{w}}_{i^\prime}\}_{i^\prime \in \mathbb{N}}}_{\ell^\infty_{\gamma_1} C_t \dot W^{1, p}_{\bs{x}}}
\nonumber \\
&\qquad  \leq 
2^{\gamma_1} \cdot 2^{\eta d} \cdot 2^{-\eta \frac{d}{p}} \cdot 2^\beta \cdot \sup_i \max \left\lbrace \frac{2}{2^\beta-1}\cdot 2^{-i(\gamma_1 + \nu \frac{d}{p})}, 1\right\rbrace
\norm{\{\bs{w}_{i^\prime}\}_{i^\prime \in \mathbb{N}} - \{\wt{\bs{w}}_{i^\prime}\}_{i^\prime \in \mathbb{N}}}_{\ell^\infty_{\gamma_1} C_t \dot W^{1, p}_{\bs{x}}}
\label{contraction: F contraction expression}
\end{align}
If $\gamma_1 \in \mathbb{R}$ obeys the condition (\ref{contraction: F cond 1}), the maximum above is realized at $i=1$. 
So, if we choose $\gamma_1 = \nu \big( 1-\frac{d}{p}\big)$, we get
\begin{equation}
    \sup_i \max \left\lbrace \frac{2}{2^\beta-1}\cdot 2^{-i(\gamma_1 + \nu \frac{d}{p})}, 1\right\rbrace
    = 
    \max \left\lbrace \frac{2^{1-\nu}}{2^\beta-1}, 1\right\rbrace
    = 1\, ,
\end{equation}
where in the last step we used the assumption $\nu> 1 + \beta - \log_2(2^\beta - 1)$. So, to get a contraction we have to impose
\begin{equation}
   2^{\nu\left(1 - \frac{d}{p}\right) + \eta d - \eta \frac{d}{p} + \beta} 
   =
   2^{\gamma_1} \cdot 2^{\eta d} \cdot 2^{-\eta \frac{d}{p}} \cdot 2^\beta 
   < 1
\end{equation}
which amounts to \eqref{eq: p range}.
\end{proof}



\section{Construction of the density field $\rho$}

We construct our density field in a manner similar to the previous section \S \ref{sec: vector field }. We first define the appropriate space of functions and a self-map. We then show that the mapping is a contraction.

\subsection{Functional analytic setting}

Let us fix parameters $r,q\ge 1$, and $\gamma_2, \gamma_3 \in \mathbb N$. Let $\rho_i^{\rm in}$ and $\rho^{\rm out}$ as in \eqref{eq:initial dens}. 
We define two Banach spaces $\mathcal{Y}$ and $\mathcal{Z}$ as
\begin{align}
\mathcal{Y} \coloneqq \ell^\infty_{\gamma_2}\left(\mathbb{N}; L^\infty([0, 1]; L^r(\mathbb{R}^d))\right)
\end{align}
equipped with the norm
\begin{equation}
\norm{\{ \varrho_i\}_{i\in \mathbb N}}_{\mathcal{Y}}
:=
\norm{\{ \varrho_i\}_{i\in \mathbb N}}_{\ell^\infty_{\gamma_2} L^\infty_t L^r_{\bs{x}}}
=
\sup_{i\in \mathbb N} 2^{-\gamma_2 i} \| \varrho_i \|_{L_t^\infty L_x^r} \, ,
\end{equation}
and
\begin{align}
\mathcal{Z} \coloneqq \Bigg\{ \{\varrho_i\}_{i \in \mathbb{N}} \in   \ell^\infty_{\gamma_3}\left(\mathbb{N}; \text{Lip}([0, 1]; W^{-1, q}(\mathbb{R}^d))\right) \; \Bigg| \qquad \qquad \qquad \qquad \qquad \nonumber \\
\varrho_i(0, \cdot) = \rho^{in}_i, \; \varrho_i(1, \cdot) = \rho^{out}, \; \supp_{\bs{x}} \varrho_i(t, \cdot) \subseteq \left[-\frac{1}{2}, \frac{1}{2}\right]^d \Bigg\}
\end{align}
equipped with the metric induced by
\begin{equation}
\norm{\{ \varrho_i\}_{i\in \mathbb N}}_{\mathcal{Z}}
:=
\norm{\{ \varrho_i\}_{i\in \mathbb N}}_{\ell^\infty_{\gamma_3} \text{Lip}_t W^{-1, q}_{\bs{x}}}
=
\sup_{i\in \mathbb N} 2^{-\gamma_3 i} \| \varrho_i \|_{{\rm Lip}_t W^{1,-q}_x}\, ,
\end{equation}
where we are using the notation introduced in \eqref{eq:norms2}, \eqref{eq:norms3}.

\subsection{Iteration map}

We now define a map $\mathcal{G}$ acting on sequences of densities. We will show that $\mathcal{G}$ is a contraction in both spaces $\mathcal{Y}$ and $\mathcal{Z}$ for appropriate choices of parameters.

\begin{subequations}
Fix $\{\varrho_i\}_{i \in \mathbb{N}} \in \mathcal{Y} \cup \mathcal{Z}$. For $k \in \{1, \dots ,2^{\eta d}\}$ and $i\in \mathbb N$, we define the $i$-th component of the self map as
\begin{align}
\mathcal{G}_i(\{\varrho_{i^\prime}\}_{i^\prime \in \mathbb{N}})(t, \bs{x}) \coloneqq \sum_{1 \leq k^\prime < k} \rho^{out}(2^\eta(\bs{x} - \bs{c}_{k^\prime})) + \sum_{k < k^\prime \leq 2^{\eta d}} \rho^{c}(2^\eta(\bs{x} - \bs{c}_{k^\prime}))
\end{align}
for every $\bs{x}\in \ol{Q}(\bs{0},\frac{1}{2})\setminus \ol{Q}(\bs{c}_k, \frac{1}{2^\eta})$ and $t\in \mathcal{T}^k$. In other words, when $t\in \mathcal{T}^k$, the $k'$-th cube of the subdivision with $k'< k$ is already in the final configuration. When $k'>k$, the $k'$-th cube is still in the initial configuration. The relevant part of the dynamics happens in the $k$-th cube: 

\begin{itemize}
    \item[(i)] If $t \in \mathcal{T}^k_{(1)}$, and $\bs{x}\in \ol{Q}(\bs{c}_k, \frac{1}{2^\eta})$ we set
\begin{align}
G_i(\{\varrho_{i^\prime}\}_{i^\prime \in \mathbb{N}})(t, \bs{x}) \coloneqq 
\; 2^{\nu d i} \varrho_1\left(\frac{\tau_{mid}^k - t}{\tau_{mid}^k - \tau^k_1},  2^{\eta + \nu i}(\bs{x} - \bs{c}_k)\right) \, .
\end{align}

    \item[(ii)] If $t \in \mathcal{T}^k_{(2)}$, and $\bs{x}\in \ol{Q}(\bs{c}_k, \frac{1}{2^\eta})$ we set
\begin{align}
\mathcal{G}_i(\{\varrho_{i^\prime}\}_{i^\prime \in \mathbb{N}})(t, \bs{x}) \coloneqq  2^{\nu d (i+1)} \ol{\rho}_b \left(\frac{t - \tau_{mid}^k}{\tau_{2}^k - \tau^k_{mid}},  2^{\eta}(\bs{x} - \bs{c}_k); \eta, \frac{1}{2^{\nu i}}, \frac{1}{2^{\nu (i+1)}}\right)\, .
\end{align}

    \item[(iii)] When $t \in \mathcal{T}^k_{(3)}$, and $\bs{x}\in \ol{Q}(\bs{c}_k, \frac{1}{2^\eta})$, we set
\begin{align}
\mathcal{G}_i(\{\varrho_{i^\prime}\}_{i^\prime \in \mathbb{N}})(t, \bs{x}) \coloneqq \varrho_{i+1}\left(\frac{ t - \tau_{2}^k}{\tau_{\infty}^k - \tau^k_2},  2^{\eta}(\bs{x} - \bs{c}_k)\right).
\end{align}
\end{itemize}
Finally, when $t = 1$, we set
\begin{align}
\mathcal{G}_i(\{\varrho_{i^\prime}\}_{i^\prime \in \mathbb{N}})(t, \bs{x}) \coloneqq \rho^{out}(\bs{x}) \, .
\end{align}
\label{density field: def Gi}
\end{subequations}

\begin{proposition}
\label{density field: main prop}
Let $\mathcal{G}$ be defined in (\ref{density field: def Gi}). Then for $d \geq 2$, $\eta \in \mathbb{N}$, $\beta \in (0, 1)$ and $\nu > 0$ 
\begin{enumerate}[label=(\roman*)]
    \item there exists a $\gamma_2 \in \mathbb{R}$ such that $\mathcal{G}:\mathcal{Y} \to \mathcal{Y}$ is a contraction for $r < \frac{\eta + \nu}{\nu}$,
    \item there exists a $\gamma_3 \in \mathbb{R}$ such that $\mathcal{G}:\mathcal{Z} \to \mathcal{Z}$ is a contraction for $1 < q < \frac{\eta d + \nu d}{\eta d + \nu d + \beta - \eta}$.
\end{enumerate}
\end{proposition}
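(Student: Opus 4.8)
The plan is to run two Banach fixed-point arguments in parallel — one in $\mathcal Y$ for the $L^r$ bound, one in $\mathcal Z$ for the $\mathrm{Lip}_tW^{-1,q}$ bound — following the template of Proposition~\ref{vector field: main prop} and of the analogous density argument in Section~\ref{sec: p=1}. On each of the three sub-intervals $\mathcal{T}^k_{(1)},\mathcal{T}^k_{(2)},\mathcal{T}^k_{(3)}$ one records how the relevant norm of $\mathcal{G}_i(\{\varrho_{i'}\})-\mathcal{G}_i(\{\widetilde{\varrho}_{i'}\})$ scales with the input: this difference is supported in the $k$-th cube and vanishes identically on $\mathcal{T}^k_{(2)}$, where $\mathcal{G}_i$ is the explicit rescaled building block $\ol{\rho}_{b,i}$, so only the rescaled copies of $\varrho_1$ (on $\mathcal{T}^k_{(1)}$) and of $\varrho_{i+1}$ (on $\mathcal{T}^k_{(3)}$) enter. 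I would then choose the weights $\gamma_2,\gamma_3$ so that, after dividing by $2^{\gamma_2 i}$ (resp.\ $2^{\gamma_3 i}$), the resulting geometric series in $i$ has ratio attained at $i=1$ and $<1$. All the $L^r$, $L^\infty$ and support-size bounds for rescalings of $\ol{\rho}_b,\ol{\bs{v}}_b$ are quoted from Proposition~\ref{prop: building block vector field}; the divergence/support/nonnegativity structure and continuity at the interfaces $\tau^k_1,\tau^k_{mid},\tau^k_2,\tau^k_\infty$ are immediate from the definition~\eqref{density field: def Gi} together with $\supp_t\ol{\rho}_b\subset(0,1)$ and the data $\varrho_i(0,\cdot)=\rho^{\rm in}_i$, $\varrho_i(1,\cdot)=\rho^{\rm out}$. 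So the real content is the two quantitative estimates.

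For \textbf{(i)}, fix $t\in\mathcal{T}^k$. Outside $\ol{Q}(\bs{c}_k,2^{-\eta})$ the field $\mathcal{G}_i(\{\varrho_{i'}\})(t,\cdot)$ is an input-independent sum of at most $2^{\eta d}$ rescaled copies of $\mathbf{1}_{\ol{Q}(\bs{0},1)}$ and of $\rho^c$, whose $L^r$-norm is $\lesssim 1+2^{\nu d i(1-1/r)}$; inside the cube the pure scaling of the $L^r$-norm gives a difference $\lesssim 2^{-\eta d/r}2^{\nu d i(1-1/r)}\|\varrho_1-\widetilde{\varrho}_1\|_{L^\infty_tL^r_{\bs{x}}}$ on $\mathcal{T}^k_{(1)}$ and $\lesssim 2^{-\eta d/r}\|\varrho_{i+1}-\widetilde{\varrho}_{i+1}\|_{L^\infty_tL^r_{\bs{x}}}$ on $\mathcal{T}^k_{(3)}$. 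Taking $\gamma_2:=\nu d(1-1/r)$ — which is exactly the order of $\|\rho^{\rm in}_i\|_{L^r}$ — and dividing by $2^{\gamma_2 i}$, both pieces are $\le 2^{-\eta d/r}2^{\gamma_2}\,\|\{\varrho_{i'}\}-\{\widetilde{\varrho}_{i'}\}\|_{\mathcal{Y}}$ with the $i$-independent constant $2^{\,\nu d(1-1/r)-\eta d/r}$, which is $<1$ precisely when $\nu(r-1)<\eta$, i.e.\ $r<\tfrac{\eta+\nu}{\nu}$. Running the same estimates against $\widetilde{\varrho}\equiv 0$, and adding the building-block $L^r$-bound on $\mathcal{T}^k_{(2)}$, yields $\mathcal{G}(\mathcal{Y})\subset\mathcal{Y}$ with $\|\mathcal{G}_i(\{\varrho_{i'}\})\|_{L^\infty_tL^r_{\bs{x}}}\lesssim 2^{\gamma_2 i}(1+\|\{\varrho_{i'}\}\|_{\mathcal{Y}})$.

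For \textbf{(ii)} the new feature is that the $\mathrm{Lip}_tW^{-1,q}$-seminorm does pick up the factor $1/|\mathcal{T}^k_{(j)}|$, and $|\mathcal{T}^k_{(1)}|=\tfrac{2^\beta-1}{2^{\beta+1}}2^{-\eta d}$, $|\mathcal{T}^k_{(3)}|=2^{-\beta-\eta d}$. Using the dilation scaling $\|h(\mu\,\cdot)\|_{W^{-1,q}}=\mu^{-1-d/q}\|h\|_{W^{-1,q}}$, the differences on $\mathcal{T}^k_{(1)}$ and $\mathcal{T}^k_{(3)}$ are $\lesssim \tfrac{2^{\eta d}}{2^\beta-1}\,2^{\nu d i}2^{-(\eta+\nu i)(1+d/q)}\|\varrho_1-\widetilde{\varrho}_1\|$ and $\lesssim 2^{\beta+\eta d}2^{-\eta(1+d/q)}\|\varrho_{i+1}-\widetilde{\varrho}_{i+1}\|$; on $\mathcal{T}^k_{(2)}$, since $\ol{\rho}_{b,i}$ solves $\partial_t\ol{\rho}_{b,i}+\ol{\bs{v}}_{b,i}\cdot\nabla\ol{\rho}_{b,i}=0$, one bounds $\|\partial_t\ol{\rho}_{b,i}(t,\cdot)\|_{W^{-1,q}}\le \|\ol{\bs{v}}_{b,i}\|_{L^\infty_{t,\bs{x}}}\|\ol{\rho}_{b,i}\|_{L^\infty_{t,\bs{x}}}\,|\supp\ol{\rho}_{b,i}(t,\cdot)|^{1/q}$, which by Proposition~\ref{prop: building block vector field} grows in $i$ like $2^{\nu d i}|\supp\ol{\rho}_{b,i}|^{1/q}\sim 2^{\,\nu d i/q'}$. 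This forces $\gamma_3:=\nu d/q'$, so that the explicit $\mathcal{T}^k_{(2)}$-piece stays bounded after dividing by $2^{\gamma_3 i}$; with this choice the contraction factors from $\mathcal{T}^k_{(1)}$ and $\mathcal{T}^k_{(3)}$ become, at $i=1$, $\tfrac{2^{1-\nu}}{2^\beta-1}\,2^{\,\beta-\eta+(\eta+\nu)d/q'}$ and $2^{\,\beta-\eta+(\eta+\nu)d/q'}$ (the first also using that $\nu$ is not too small, to absorb $\tfrac{1}{2^\beta-1}$, in the spirit of the hypothesis $\nu>\nu_0$ of Proposition~\ref{vector field: main prop}). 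Both are $<1$ exactly when $(\eta+\nu)\tfrac{d}{q'}<\eta-\beta$, i.e.\ $q<\tfrac{\eta d+\nu d}{\eta d+\nu d+\beta-\eta}$ — a nonempty range, since $\eta\ge 1>\beta$. The verifications that $\mathcal{G}(\mathcal{Z})\subset\mathcal{Z}$ — including $\mathcal{G}_i(\{\varrho_{i'}\})(0,\cdot)=\rho^{\rm in}_i$, read off from $\varrho_1(1,\cdot)=\rho^{\rm out}$, and $\mathcal{G}_i(\{\varrho_{i'}\})(1,\cdot)=\rho^{\rm out}$ by definition — and that the distributional limit at $t=1$ equals $\rho^{\rm out}$ go exactly as in the $L^\infty_tL^1_{\bs{x}}$ case.

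The step I expect to be most delicate is not any single inequality but the bookkeeping of the two antagonistic requirements — $\mathcal{G}$ mapping each space into itself versus contracting — coupled through the recursion $\mathcal{G}_i\leftrightarrow\varrho_{i+1}$: for $\mathcal{Z}$, boundedness of the explicit $\mathcal{T}^k_{(2)}$-term pins $\gamma_3$ from below at $\nu d/q'$, whereas contraction on $\mathcal{T}^k_{(3)}$ wants $\gamma_3$ small, and it is the compatibility of the two that produces the sharp exponent $q<\tfrac{\eta d+\nu d}{\eta d+\nu d+\beta-\eta}$ (and, analogously, $r<\tfrac{\eta+\nu}{\nu}$). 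The secondary technical point is extracting from Proposition~\ref{prop: building block vector field} the correct $i$-scaling of $|\supp\ol{\rho}_{b,i}(t,\cdot)|$ that enters the $\mathcal{T}^k_{(2)}$-estimate.
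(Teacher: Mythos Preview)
Your proposal is correct and follows essentially the same route as the paper: the same decomposition into $\mathcal{T}^k_{(1)},\mathcal{T}^k_{(2)},\mathcal{T}^k_{(3)}$, the same scaling computations, the same choices $\gamma_2=\nu d(1-1/r)$ and $\gamma_3=\nu d/q'$, and the same use of the transport equation for $\ol{\rho}_{b,i}$ together with the support bound from Proposition~\ref{prop: building block vector field} to control the explicit $\mathcal{T}^k_{(2)}$ piece. You are also right to flag the implicit lower bound on $\nu$ needed to make the $\mathcal{T}^k_{(1)}$ contraction factor subordinate to the $\mathcal{T}^k_{(3)}$ one; the paper's proof invokes the same condition $\nu\ge 1-\log_2(2^\beta-1)$ at exactly that step, even though the proposition is stated for $\nu>0$.
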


\subsection{Contraction in $\mathcal{Y}$: proof of Proposition~\ref{density field: main prop}(i)}

We note that $\mathcal{G}(\cdot) - \mathcal{G}(0)$ is a linear operator. In particular, to check that $\mathcal{G}$ is a contraction on the linear space $\mathcal{Y}$, it is enough to check that
  $\| \mathcal{G}(0)\|_{\mathcal{Y}} < \infty$
and that 
    $\| \mathcal{G}(\{\varrho_i\}_{i\in \mathbb N}) - \mathcal{G}(0)\|_{\mathcal{Y}} 
    <
    \| \{\varrho_i\}_{i\in \mathbb N} \|_{\mathcal{Y}} $.

To prove that   $\| \mathcal{G}(0)\|_{\mathcal{Y}} < \infty$, we compute
\begin{equation}
    \| \mathcal{G}_i(0)(t,\cdot) \|_{L^r} \le 
    \frac{k-1}{2^{\frac{\eta d}{r}}} \| \rho^{\rm out}\|_{L^r}
    + 
    \frac{2^{\eta d} - k}{2^{\frac{\eta d}{r}}} \| \rho^{\rm c}\|_{L^r}
    +
    \frac{2^{\nu d (i+1)}}{2^{\frac{\eta d}{r}}} \norm{\ol{\rho}_b\left(\cdot, \frac{1}{2^{\nu i}}, \frac{1}{2^{\nu (i+1)}}\right)}_{L^\infty_t L^r_{\bs{x}}} \, .
\end{equation}
Noting that 
$
\norm{\rho^{out}}_{L^r} 
\leq 
\norm{\rho^{c}_i}_{L^r} 
= 
2^{\nu d i\frac{r-1}{r}} 
$ 
and
\begin{equation}
\norm{\ol{\rho}_b\left(\cdot, \frac{1}{2^{\nu i}}, \frac{1}{2^{\nu (i+1)}}\right)}_{L^\infty_t L^r_{\bs{x}}}
\le \left({2^{-\nu (i+1)}} \right)^{\frac{d}{r}} \, ,
\end{equation}
we get 
\begin{equation}
    \| \mathcal{G}(0) \|_{\mathcal{Y}}
    \le C(\eta, d,\nu) \sup_{i\in \mathbb N} 2^{\nu d \left(1 - \frac{1}{r}\right)i - \gamma_2 i}\, ,
\end{equation}
which is finite if and only if
\begin{equation}
    \gamma_2 \ge \nu d \left(1 - \frac{1}{r}\right) \, .
\end{equation}

We now focus on  the estimate  
    $\| \mathcal{G}(\{\varrho_i\}_{i\in \mathbb N}) - \mathcal{G}(0)\|_{\mathcal{Y}} 
    <
    \| \{\varrho_i\}_{i\in \mathbb N} \|_{\mathcal{Y}} $.Let $\{\varrho_{i^\prime}\}_{i^\prime \in \mathbb{N}} \in \mathcal{Y}$. We have
\begin{align}
    \| \mathcal{G}_i(\{\varrho_{i^\prime}\}_{i^\prime \in \mathbb{N}})(t, \cdot) - \mathcal{G}_i(0)(t,\cdot)\|_{L^r}
    &\le 
    \max\{
    2^{\nu d i} \left(\frac{1}{2^{\eta + \nu i}} \right)^{\frac{d}{r}} \norm{\varrho_1}_{L^\infty_t L^r_{\bs{x}}} ;
    2^{-\eta \frac{d}{r}} \norm{\varrho_{i + 1}}_{L^\infty_t L^r_{\bs{x}}} \}
    \\& \le
    2^{-\eta \frac{d}{r}} \max\{2^{\nu d(1-\frac{1}{r})i} 2^{\gamma_2} ; 2^{\gamma_2(i+1)}\} \| \{\varrho_{i'}\}_{i'\in \mathbb N}\|_{\mathcal{Y}} \, , 
\end{align}
hence,
\begin{align}
    \| \mathcal{G}_i(\{\varrho_{i^\prime}\}_{i^\prime \in \mathbb{N}})(t, \cdot) - \mathcal{G}_i(0)(t,\cdot)\|_{\mathcal{Y}}
    \le
    2^{-\eta \frac{d}{r} + \gamma_2} \cdot \sup_{i\in \mathbb N} \max\{ 2^{\nu d \left(1 - \frac{1}{r}\right)i - \gamma_2 i}; 1 \} \| \{\varrho_{i'}\}_{i'\in \mathbb N}\|_{\mathcal{Y}} \, .
\end{align}

To conclude, we pick $\gamma_2 = \nu d \left(1 - \frac{1}{r}\right)$, so that $\| \mathcal{\mathcal{G}}(0)\|_{\mathcal{Y}}<\infty$. Moreover,
\begin{align}
    \| \mathcal{\mathcal{G}}_i(\{\varrho_{i^\prime}\}_{i^\prime \in \mathbb{N}})(t, \cdot) - \mathcal{\mathcal{G}}_i(0)(t,\cdot)\|_{\mathcal{Y}}
    \le
    2^{-\eta \frac{d}{r} + \nu d(1-\frac{1}{r})} \| \{\varrho_{i'}\}_{i'\in \mathbb N}\|_{\mathcal{Y}} \, .
\end{align}
and we require the constant to be strictly less then $1$, namely
\begin{equation}
    -(\eta + \nu) \frac{d}{r} + \nu d < 0 \, .
\end{equation}

\subsection{Proof of Proposition~\ref{density field: main prop}(ii): $\mathcal{G}$ maps $\mathcal{Z}$ to $\mathcal{Z}$}


We begin by proving that $\mathcal{G}$ maps $\mathcal{Z}$ to $\mathcal{Z}$.


As regards the boundary conditions and support, from the definition of $\mathcal{G}_i$, it is easy to verify that for all  $\{\varrho_{i^\prime}\}_{i^\prime \in \mathbb{N}} \in \mathcal{Z}$
\begin{enumerate}[label={(\roman*)}]
\item $\mathcal{G}_i(\{\varrho_{i^\prime}\}_{i^\prime \in \mathbb{N}})(0, \cdot) = \rho^{in}_i$ after noting that $2^{\nu d i} \varrho_1(1, 2^{\eta + \nu i}(\bs{x} - \bs{c}_1)) = \rho^c_i(2^{\eta}(\bs{x} - \bs{c}_1))$,
\item $\mathcal{G}_i(\{\varrho_{i^\prime}\}_{i^\prime \in \mathbb{N}})(1, \cdot) = \rho^{out}_i$,
\item $\supp_{\bs{x}} \mathcal{G}_i(\{\varrho_{i^\prime}\}_{i^\prime \in \mathbb{N}})(t, \cdot) \subseteq \left[-\frac{1}{2}, \frac{1}{2}\right]^d$.
\end{enumerate}


 We first show that $\mathcal{G}_i(\{\varrho_{i^\prime}\}_{i^\prime \in \mathbb{N}})$ is Lipschitz in time in the $W^{-1, q}$ topology in the interior of time intervals $\mathcal{T}^k_{(1)}$, $\mathcal{T}^k_{(2)}$ and $\mathcal{T}^k_{(3)}$.
Then, we check that $\mathcal G_i(\{\varrho_{i^\prime}\}_{i^\prime \in \mathbb{N}})$ is continuous at the interfaces $\tau^k_1$, $\tau^k_{mid}$, $\tau^k_2$ and $\tau^k_\infty$.


\vspace{0.5cm}

\noindent
From the definition of $W^{-1,q}$ norm we deduce
\begin{align}
    \norm{\mathcal{G}_i(\{\varrho_{i^\prime}\}_{i^\prime \in \mathbb{N}})}_{C^1(\mathcal{T}^k_{(1)}; W^{-1,q})}
    \le &
    \frac{2^{\nu d i}}{\tau^k_{mid} - \tau^k_1} \left(\frac{1}{2^{\eta + \nu i}}\right)^{1 + \frac{d}{q}} \norm{\varrho_1}_{\text{Lip}_t W^{-1, q}_{\bs{x}}}
    \\
    \norm{\mathcal{G}_i(\{\varrho_{i^\prime}\}_{i^\prime \in \mathbb{N}})}_{C^1(\mathcal{T}^k_{(2)}; W^{-1,q})}
    \le &
    \frac{2^{\nu d (i+1)}}{\tau^k_2 - \tau^k_{mid}}  \left(\frac{1}{2^{\eta}} \right)^{1+\frac{d}{q}} \norm{\bar \rho_b \left(\cdot\, , \cdot\, ; \eta, \frac{1}{2^{\nu i}}, \frac{1}{2^{\nu(i+1)}}\right)}_{C^1_t W^{-1,q}_x}
    \\
    \le & C(\nu,d,\eta,\beta) 2^{\nu d i(1-\frac{1}{q})}
    \\
    \norm{\mathcal{G}_i(\{\varrho_{i^\prime}\}_{i^\prime \in \mathbb{N}})}_{C^1(\mathcal{T}^k_{(3)}; W^{-1,q})}
    \le &
    \frac{1}{\tau^k_{\infty} - \tau^k_2} \left(\frac{1}{2^{\eta}}\right)^{1 + \frac{d}{q}} \norm{\varrho_{i+1}}_{\text{Lip}_t W^{-1, q}_{\bs{x}}} \, .
\end{align}
In the second estimate we used that $\bar \rho_{b,i}(t, \bs{x}):= \bar \rho_{b}(t, \bs{x};\eta,\frac{1}{2^{\nu i}}, \frac{1}{2^{\nu(i+1)}})$ solves the transport equation with velocity field $ \ol{\bs{v}}_{b,i}(x,t):=  \ol{\bs{v}}_{b}(t, \bs{x};\eta,\frac{1}{2^{\nu i}}, \frac{1}{2^{\nu(i+1)}})$, hence  
\begin{align}
\left|\frac{d}{dt}\int_{\mathbb{R}^d} \ol{\rho}_{b, i} \, \varphi \; {\rm d} \bs{x}\right| & \leq \norm{\ol{\bs{v}}_{b, i}}_{L^\infty_{t, \bs{x}}} \norm{\ol{\rho}_{b, i}}_{L^\infty_{t, \bs{x}}} \int_{\supp \ol{\rho}_{b, i}(t, \cdot)} |\nabla \varphi| \; {\rm d} \bs{x} \nonumber \\
& \leq  
C \mathscr{L}^d(\supp \ol{\rho}_{b, i}(t, \cdot))^{\frac{1}{q}} \| \nabla \varphi \|_{L^{q'}}
\\& \leq 
C 2^{-d\nu i \frac{1}{q}} \| \nabla \varphi \|_{L^{q'}}\, ,
\end{align}
for any $\varphi \in C_c^\infty(\mathbb R^d)$.
We now check continuity of the $W^{-1, q}$ norm at $t = \tau^k_1$, $\tau^k_2$, $\tau^k_3$ and $\tau^k_\infty$. 
\vspace{0.5cm}

\noindent
After substituting $t = \tau^k_1$ in the expression of $\mathcal{G}_i$, we get
\begin{align}
\mathcal{G}_i(\{\varrho_{i^\prime}\}_{i^\prime \in \mathbb{N}})(\tau^k_1, \bs{x}) &= \sum_{1 \leq k^\prime < k} \rho^{out}(2^\eta(\bs{x} - \bs{c}_{k^\prime})) + \sum_{k < k^\prime \leq 2^{\eta d}} \rho^{c}_i(2^\eta(\bs{x} - \bs{c}_{k^\prime})) + 2^{\nu d i} \varrho_1\left(1,  2^{\eta + \nu i}(\bs{x} - \bs{c}_k)\right)
\nonumber
\\&
= \sum_{1 \leq k^\prime < k} \rho^{out}(2^\eta(\bs{x} - \bs{c}_{k^\prime})) + \sum_{k \leq k^\prime \leq 2^{\eta d}} \rho^{c}_i(2^\eta(\bs{x} - \bs{c}_{k^\prime})).
\end{align}
where in the last line we used that 
$2^{\nu di}\varrho_1\left(1,  2^{\eta + \nu i}(\bs{x} - \bs{c}_k)\right) = \rho^c_i(2^\eta(\bs{x} - \bs{c}_k))
$.

We now see that if $k = 1$ then $\mathcal{\mathcal{G}}_i(\{\varrho_{i^\prime}\}_{i^\prime \in \mathbb{N}})$ is equal to the initial condition $\rho^{in}_i$. For $k \geq 2$, then we also need to check for the continuity from the left at $t = \tau^k_1$. Since
$
\varrho_{i+1}(1, 2^\eta(\bs{x} - \bs{c}_k)) = \rho^{out}(1, 2^\eta(\bs{x} - \bs{c}_k))
$, 
we have that
\begin{align}
\mathcal{G}_i(\{\varrho_{i^\prime}\}_{i^\prime \in \mathbb{N}})(t, \bs{x}) \to  \sum_{1 \leq k^\prime < k} \rho^{out}(2^\eta(\bs{x} - \bs{c}_{k^\prime})) + \sum_{k \leq k^\prime \leq 2^{\eta d}} \rho^{c}_i(2^\eta(\bs{x} - \bs{c}_{k^\prime})) \quad \text{in} \quad W^{-1, q} \quad \text{as} \quad t \nearrow \tau^k_{1}.
\end{align}


The continuity at $t = \tau^k_{mid}$ can be verified as follows: 
\begin{align}
2^{\nu d i}\rho_{1}\left(0, 2^{\nu i}\bs{x}\right) & = 2^{\nu d i} \rho^{in}_1(2^{\nu i} \bs{x})  = \sum_{k^\prime = 1}^{2^{\eta d}} 2^{\nu d i} \rho^c_i(2^\eta(2^{\nu i} \bs{x} - \bs{c}_{k^\prime}))   = 
2^{\nu d (i + 1)} \ol{\rho}_b\left(0, \bs{x}; \frac{1}{2^{\nu i}}, \frac{1}{2^{\nu(i+1)}}\right).
\end{align}

The continuity at $t = \tau^k_{2}$ follows by a similar computation
\begin{align}
& \rho_{i+1}(0, \bs{x}) = \sum_{k^\prime = 1}^{2^{\eta d}}  \rho^c_{i+1}(2^\eta( \bs{x} - \bs{c}_{k^\prime})) 
 = 2^{\nu d (i + 1)} \ol{\rho}_b\left(1, \bs{x}; \frac{1}{2^{\nu i}}, \frac{1}{2^{\nu(i+1)}}\right).
\end{align}
\vspace{0.5cm}

\noindent
The continuity at $t = \tau^k_{\infty}$ can be obtained as follow. As $\tau^k_1 = \tau^k_\infty$ for $k \in \{1, \dots 2^{\eta d} - 1\}$, therefore we only need to consider the case $k = 2^{\eta d}$. As before, since 
$
\varrho_{i+1}(1, 2^\eta(\bs{x} - \bs{c}_k)) = \rho^{out}(1, 2^\eta(\bs{x} - \bs{c}_k)),
$
we have
\begin{align}
\mathcal{G}_i(\{\varrho_{i^\prime}\}_{i^\prime \in \mathbb{N}})(t, \cdot) \to  \sum_{1 \leq k^\prime \leq 2^{\eta d}} \rho^{out}(2^\eta(\bs{x} - \bs{c}_{k^\prime})) = \rho^{out}(\bs{x})\quad \text{in} \quad W^{-1, q} \quad \text{as} \quad t \nearrow \tau^k_{\infty},
\end{align}
which 
implies continuity at $t = \tau^{2^{\eta d}}_{\infty}$.


Collecting all the estimates above, we conclude
\begin{align}
 \frac{1}{2^{\gamma_3 i}}\norm{\mathcal{G}_i(\{\varrho_{i^\prime}\}_{i^\prime \in \mathbb{N}})}_{\text{Lip}_t W_{\bs{x}}^{-1, q}}
 \le 
 C(d,\eta,\beta,\nu,q,\gamma_3) 2^{\nu d i(1-\frac{1}{q}) - \gamma_3 i}(1 + \norm{\{\varrho_{i^\prime}\}_{i^\prime \in \mathbb{N}}}_{\ell^\infty_{\gamma_3}\text{Lip}_t W^{-1, q}_{\bs{x}}})\, ,
\end{align}
hence, $\mathcal{G}: \mathcal Z \to \mathcal Z$ if
\begin{equation}
    \frac{\nu d}{q'}  - \gamma_3 \le 0 \, .
\end{equation}


\subsection{Proof of Proposition~\ref{density field: main prop}(ii): $\mathcal G: \mathcal{Z}\to \mathcal{Z}$ is a contraction}

Fix $\{\varrho_{i^\prime}\}_{i^\prime \in \mathbb{N}} \, ,\{\varsigma_{i^\prime}\}_{i^\prime \in \mathbb{N}} \in \mathcal{Z}$. It is immediate to check the following estimates
\begin{align}
    &\| \mathcal{G}_i(\{\varrho_{i^\prime}\}_{i^\prime \in \mathbb{N}}) - \mathcal{G}_i(\{\varsigma_{i^\prime}\}_{i^\prime \in \mathbb{N}})
\|_{C^1(\mathcal{T}^k_{(1)};W^{-1,q})}
     \le \frac{2^{\nu d i}}{\tau^k_{mid} - \tau^k_1} \left(\frac{1}{2^{\eta + \nu i}}\right)^{1 + \frac{d}{q}} \norm{\varrho_1 - \varsigma_1}_{\text{Lip}_t W^{-1, q}_{\bs{x}}}.
     \\&
      \| \mathcal{G}_i(\{\varrho_{i^\prime}\}_{i^\prime \in \mathbb{N}}) - \mathcal{G}_i(\{\varsigma_{i^\prime}\}_{i^\prime \in \mathbb{N}})\|_{C^1(\mathcal{T}^k_{(2)};W^{-1,q})} = 0
      \\&
       \| \mathcal{G}_i(\{\varrho_{i^\prime}\}_{i^\prime \in \mathbb{N}}) - \mathcal{G}_i(\{\varsigma_{i^\prime}\}_{i^\prime \in \mathbb{N}})\|_{C^1(\mathcal{T}^k_{(3)};W^{-1,q})}
       \le
       \frac{1}{\tau^k_{\infty} - \tau^k_2} \left(\frac{1}{2^{\eta}}\right)^{1 + \frac{d}{q}} \norm{\varrho_{i+1} - \varsigma_{i+1}}_{\text{Lip}_t W^{-1, q}_{\bs{x}}} \, ,
\end{align}
leading to
\begin{align}
&\frac{1}{2^{\gamma_3 i}}\norm{\mathcal{G}_i(\{\varrho_{i^\prime}\}_{i^\prime \in \mathbb{N}})  - \mathcal{G}_i(\{\varsigma_{i^\prime}\}_{i^\prime \in \mathbb{N}})}_{ \text{Lip}_t W_{\bs{x}}^{-1, q}} \nonumber 
\\ & \quad \leq 
2^{\gamma_3}\cdot 2^{-\eta (1+\frac{d}{q})} \cdot
\sup_{i \in \mathbb{N}} \max\left\{\frac{2^{\nu d i(1-\frac{1}{q}) - \nu i - \gamma_3 i}}{\tau^k_{mid} - \tau^k_1} , \frac{1}{\tau^k_{\infty} - \tau^k_2} \right\} 
\norm{\{\varrho_{i^\prime}\}_{i^\prime \in \mathbb{N}} - \{\varsigma_{i^\prime}\}_{i^\prime \in \mathbb{N}}}_{\ell^\infty_{\gamma_3} \text{Lip}_t W^{-1, q}_{\bs{x}}}.
\end{align}
We choose $\gamma_3 = \frac{\nu d}{q'}$, so that the maximum is achieved at $i=1$. It is easy to see that if $\nu \ge 1-\log_2(2^\beta-1)$, then
\begin{equation}
   \sup_{i \in \mathbb{N}} \max\left\{\frac{2^{\nu d i(1-\frac{1}{q}) - \nu i - \gamma_3 i}}{\tau^k_{mid} - \tau^k_1} , \frac{1}{\tau^k_{\infty} - \tau^k_2} \right\}
   = 2^{\beta + \eta d} \, .
\end{equation}
Hence $\mathcal{G}$ is a contraction in $\mathcal Z$ provided
\begin{equation}
    2^{\frac{\nu d}{q'} - \eta(1+\frac{d}{q}) + \beta + \eta d} 
    =
    2^{\gamma_3 - \eta(1+\frac{d}{q}) + \beta + \eta d}
    < 1 \, ,
\end{equation}
which amounts to
$
q < \frac{\eta d + \nu d}{\eta d + \nu d - \eta + \beta} \, .
$

\section{Proof of Theorem \ref{Intro: main thm}}
Let $p,r$ be as in the statement of the theorem, and let us further assume without loss of generality that $\frac{1}{p} + \frac{1}{r} < 1 + \frac{1}{d} $, so that $\bs{u} \in C([0, 1]; L^{\frac{r}{r-1}})$ follows from 
$\bs{u} \in C([0, 1]; W^{1, p})$ by Sobolev embedding. Next, we choose parameter $\beta = 1/2$. We then choose two numbers $\ol{p}$ and $\ol{r}$ such that
\begin{align}
p < \ol{p}, \qquad r < \ol{r}, \qquad
1 < \frac{1}{\ol{p}} + \frac{d-1}{d \, \ol{r}} < \frac{1}{p} + \frac{d-1}{d \, r}.
\end{align}
Moreover, we ensure that $\ol{r}$ is large enough such that
\begin{align}
\label{an extra cond. on r}
3 \, \ol{r} \, \left(\frac{1}{\ol{p}} + \frac{d-1}{d \, \ol{r}} - 1\right) < \frac{\beta}{d}. 
\end{align}
Next, we define $\wt{\eta}$ and $\wt{\nu}$ as
\begin{align}
\ol{r} = \frac{\wt{\eta} + \wt{\nu}}{\wt{\nu}} \qquad \ol{p} = \frac{(\wt{\eta}+\wt{\nu})d}{\wt{\eta} d + \wt{\nu} + \beta}.
\end{align}
The condition (\ref{an extra cond. on r}) ensures that $\wt{\nu} > 3 > 1 + \beta - \log_2(2^\beta - 1)$. We finally define our choices of $\eta$ and $\nu$ as
\begin{align}
\eta = \left\lceil \, \wt{\eta} \, \right\rceil, \qquad \nu = \frac{\eta}{\ol{r} - 1} \geq \wt{\nu}.
\label{choice of eta and nu}
\end{align}
Here, $\left\lceil \, \cdot \, \right\rceil$ is the ceiling function. From (\ref{choice of eta and nu}), we see that 
$
r < \ol{r} = \frac{\eta + \nu}{\nu}.
$ 
As a result, we obtain
\begin{align}
p < \ol{p} = \frac{(\wt{\eta}+\wt{\nu})d}{\wt{\eta} d + \wt{\nu} + \beta} = \frac{\ol{r} \, d}{(\ol{r} - 1) d + 1 + \dfrac{\beta}{\wt{\nu}}} \leq \frac{\ol{r} \, d}{(\ol{r} - 1) d + 1 + \dfrac{\beta}{{\nu}}} = \frac{(\eta+\nu)d}{\eta d + \nu + \beta}.
\end{align}
Therefore, every element from the sequence $\{\bs{w}_i\}_{i \in \mathbb{N}}$ belongs to $C([0, 1]; W^{1, p})$. 

Next, we consider the space $\mathcal{Y}\cap \mathcal{Z}$ with the metric induced by 
\begin{equation}
    \| \{\varrho_i\}_{i\in \mathbb N} \|_{\mathcal{Y}\cap \mathcal{Z}} 
    := 
    \| \{\varrho_i\}_{i\in \mathbb N} \|_{\mathcal{Y}}
    +
     \| \{\varrho_i\}_{i\in \mathbb N} \|_{\mathcal{Z}} \, .
\end{equation}
If we choose 
\begin{equation}
    1 < q < \frac{(\eta + \nu)d}{(\eta + \nu)d + \beta - \eta} \, ,
\end{equation}
then $\mathcal{G}: \mathcal{Y}\cap \mathcal{Z} \to \mathcal{Y}\cap \mathcal{Z}$ is a contraction for suitable weights $\gamma_2$, $\gamma_3$, as a consequence of Proposition \ref{density field: main prop} and we let 
$\{\rho_i\}_{i\in \mathbb{N}}$ the unique fixed point of $\mathcal{G}$ in $\mathcal{Y}\cap \mathcal{Z}$. It is then clear from Proposition \ref{density field: main prop} that each $\rho_i$ belongs to $L^\infty([0, 1]; L^r(\mathbb{R}^d))$.

Next, we show that $\rho_i$ solves the continuity equation corresponding to the vector field $\bs{v}_i$, i.e.,
\begin{align}
\frac{d}{dt} \int_{\mathbb{R}^d} \rho_i \, \phi \, {\rm d} \bs{x} = - \int_{\mathbb{R}^d} \rho_i \, \bs{v}_i \cdot \nabla \phi \, {\rm d} \bs{x} \quad \text{for all} \quad \phi \in C^\infty_c(\mathbb{R}^d)
,
\label{density solves continuity: weak sense i}
\end{align}
for a.e. $t \in [0, 1]$ and for all $i \in \mathbb{N}$. For $i \in \mathbb{N}$, let $c_i$ be the measure of the subset of $[0,1]$ where  $\bs{v}_i$ and $\rho_i$ obey (\ref{density solves continuity: weak sense i}). 
From the definition of $\mathcal F$ and $\mathcal G$ 
and since $\bs{v}_i$ and $\rho_i$ represent a fixed point, we know that they solve (TE) in the intervals $\mathcal{T}^k_{(2)}$, which have measure $({2^\beta - 1}){2^{-\beta -1}}$, as well as in a fraction $c_1$ of $\mathcal{T}^k_{(1)}$ and in a fraction $c_{i+1}$ of $\mathcal{T}^k_{(3)}$. Hence
\begin{align}
& c_i \geq c_1 \frac{2^\beta - 1}{2^{\beta + 1}} + \frac{2^\beta - 1}{2^{\beta + 1}} + \frac{c_{i+1}}{2^{\beta + 1}}.
\end{align}
Taking the infimum over $i\in \mathbb N$ on both sides we get
\begin{align}
& \inf c_i \geq \inf c_i \frac{2^\beta - 1}{2^{\beta + 1}} + \frac{2^\beta - 1}{2^{\beta + 1}} + \frac{\inf c_i}{2^{\beta + 1}}. 
\end{align}
which implies that $\inf c_i \geq 1$.

Finally, we let the velocity field $\bs{u}(t, \cdot) = \bs{w}_1(1 - t, \cdot)$ and the density $\rho(t, \cdot) = \rho_1(1 - t, \cdot)$, which then satisfy all the requirements stated in Theorem \ref{Intro: main thm}.

\begin{figure}[h]
\centering
 \includegraphics[scale = 1.3]{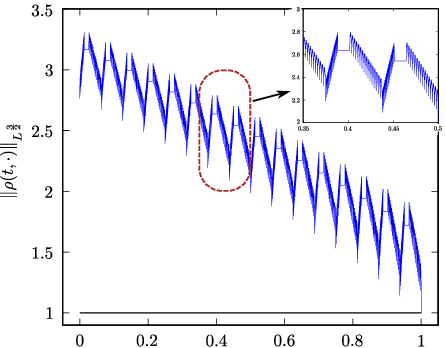}
 \caption{shows the plot of $ t\to \norm{\rho(t, \cdot)}_{L^{\frac{3}{2}}}$, where $\rho$ in this figure is the first element $\rho_1$ in $\{\rho_i\}_{i \in \mathbb{N}}$ from our  $L^r$ construction with $d = 2$, $\beta = 0.8$, $\nu = 2.3$, $\eta = 2$. Notice that $\norm{\rho(t, \cdot)}_{L^{\frac{3}{2}}}$ is bounded and continuous as opposed to our $L^1$ construction (see Figure~\ref{fig: plot Lr norm density L1 consturction}). 
 }
 \label{fig: org prb}
\end{figure}

\section{Building block}
\label{sec: building block}
In this section, we design the building blocks of our construction. The key function of our building block vector field $\ol{\bs{v}}_b(t, \bs{x})=\ol{\bs{v}}_b(t,\bs{x};\eta, a, s)$ is to ``spread'' cubes as depicted in figure \ref{fig: building block}. More precisely, given three parameters $\eta \in \mathbb{N}$ and $0 < 2s < a < 1$, we will build a divergence-free vector field $\ol{\bs{v}}_b(t,\bs{x};\eta, a, s)$ that moves the density field $\ol{\rho}_b(t,\bs{x};\eta, a, s)$, which is initially at $t=0$ concentrates on cubes of sizes $\frac{s}{2^\eta}$ placed uniformly in a grid fashion inside a cube of size $a$, to a final configuration at $t = 1$, where these cubes evenly spread (again in a grid fashion) inside a cube of size $1$.  Next, we write down the initial and final density field $\ol{\rho}^s$ and $\ol{\rho}^e$ in our building block construction.
\begin{align}
\ol{\rho}^s \coloneqq 
\begin{cases}
& 1 \quad \text{if} \quad \bs{x} \in \bigcup\limits_{k \in \{1, \dots 2^{\eta d}\}}\ol{Q}(a \bs{c}^\eta_k, \frac{s}{2^\eta}), \\
& 0 \quad \text{otherwise},
\end{cases}
\\
\ol{\rho}^e \coloneqq 
\begin{cases}
& 1 \quad \text{if} \quad \bs{x} \in \bigcup\limits_{k \in \{1, \dots 2^{\eta d}\}}\ol{Q}(\bs{c}^\eta_k, \frac{s}{2^\eta}), \\
& 0 \quad \text{otherwise}.
\end{cases}
\end{align}

\begin{figure}[h]
\centering
 \includegraphics[scale = 0.4]{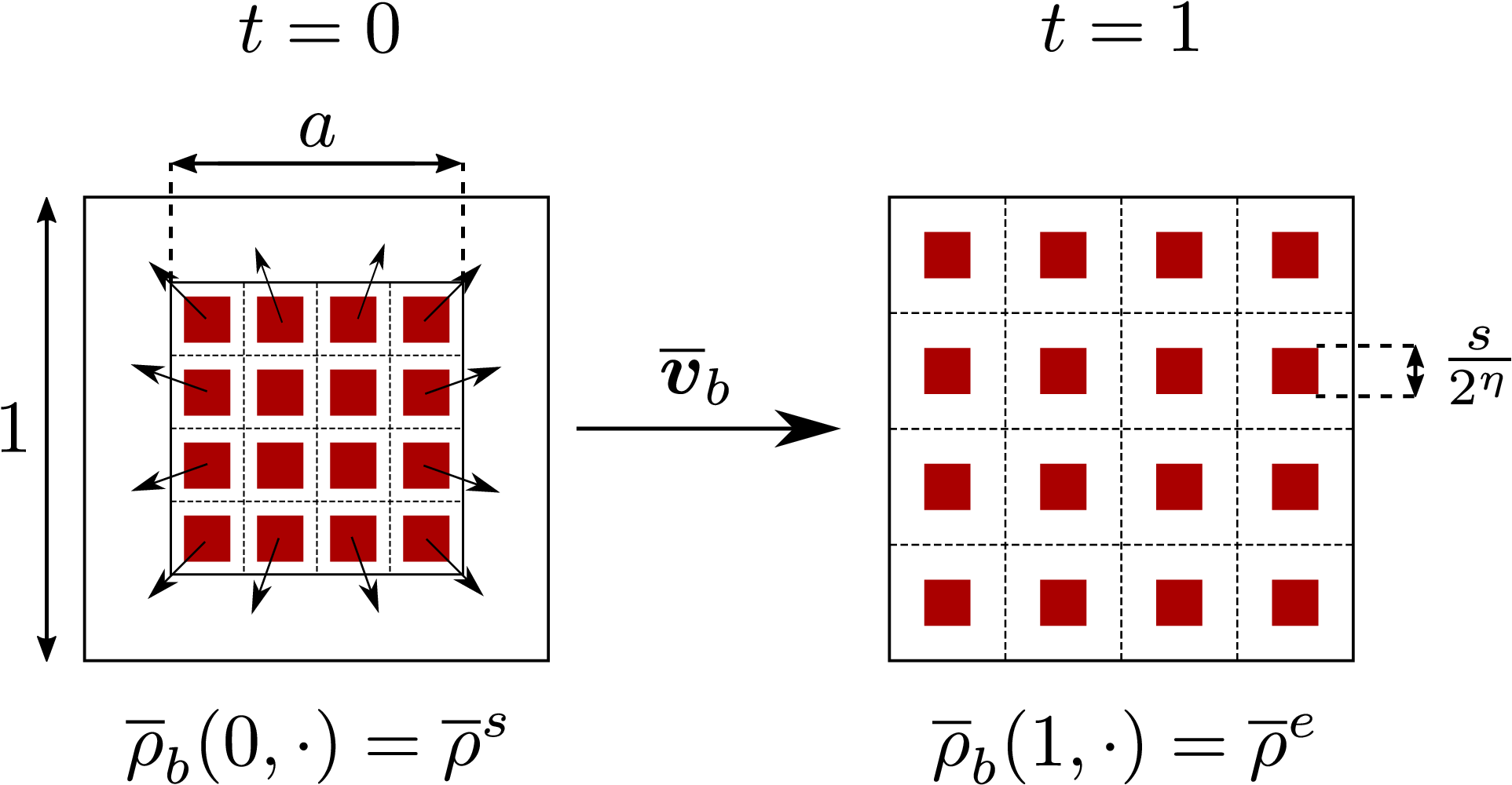}
 \caption{shows the action of the building block vector field $\ol{\bs{v}}_b$. At time $t = 0$ the density field is $\ol{\rho}^s$, which concentrates on $2^{\eta d}$ cubes each of size $\frac{s}{2^\eta}$ placed evenly inside another cube of size $a < 1$. The vector field $\ol{\bs{v}}_b$ spreads these cubes in $1$ amount of time such that in the final configuration these cubes are spaciously laid out inside a cube of size $1$.}
 \label{fig: building block}
\end{figure}



The main result of this section is the following proposition.
\begin{proposition}
\label{prop: building block vector field}
Given two parameters $0 < 2 s < a < 1$, there exists a divergence-free vector field $\ol{\bs{v}}_b \in C^{\infty}_c([0, 1] \times \mathbb{R}^d; \mathbb{R}^d)$, supported on $\left[\frac{1}{3}, \frac{2}{3}\right]\times\left[-\frac{1}{2}, \frac{1}{2}\right]^d$, and a solution  $\ol{\rho}_b \in L^\infty([0, 1] \times \mathbb{R}^d)$ of the continuity equation with vector field $\ol{\bs{v}}_b$  satisfying the following properties
\begin{equation}
    \label{eqn:supps}
    \mathscr{L}^d(\supp \ol{\bs{v}}_b(t, \cdot) ) \leq 2^d s^d \mbox{ for 0 }\leq t \leq 1,
\end{equation}
\begin{equation}
\label{eqn:reg}
\norm{\nabla_{(t, \bs{x})}^i \ol{\bs{v}}_b}_{L^\infty_{t, \bs{x}}} \leq
C(d,i) \left(\frac{2^\eta}{s} \right)^i\qquad \mbox{for every } i=0,1,...,
\end{equation}
$\ol{\rho}_b (t, \cdot)$ is either $0$ or $1$, the set $\{\ol{\rho}_b (t, \cdot)=1 \} \subseteq D$ has measure $s^d$ for every $t$ 
and satisfies
\begin{equation}
\label{eqn:rhose}
\ol{\rho}_b(t, \cdot) = 
\begin{cases}
\ol{\rho}_s \quad &\text{for} \quad 0 \leq t < \frac{1}{3}, \\
 \ol{\rho}_e \quad &\text{for} \quad \frac{2}{3} < t \leq 1.
\end{cases}
\end{equation}
\end{proposition}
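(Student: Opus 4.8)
The plan is to realize the entire motion by a single explicit, smooth, divergence-free flow that translates all $2^{\eta d}$ little cubes rigidly and \emph{simultaneously}, via one homothety of the grid. Fix a $C^\infty$ non-decreasing $g\colon[0,1]\to[a,1]$ with $g\equiv a$ on $[0,\tfrac13]$ and $g\equiv 1$ on $[\tfrac23,1]$, so that $\dot g\in C^\infty_c((\tfrac13,\tfrac23))$ and $|\dot g|,|\ddot g|,\dots\lesssim 1$; set $\bs b_k(t):=g(t)\bs c^\eta_k$, which joins $a\bs c^\eta_k$ at $t=0$ to $\bs c^\eta_k$ at $t=1$ with $\dot{\bs b}_k(t)=\dot g(t)\bs c^\eta_k$. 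Define the density by $\ol{\rho}_b(t,\cdot):=1_{\Omega(t)}$ with $\Omega(t):=\bigcup_{k}\ol{Q}(\bs b_k(t),\tfrac{s}{2^\eta})$, and put $D:=\bigcup_{t\in[0,1]}\Omega(t)$. Since $|\bs c^\eta_k-\bs c^\eta_{k'}|_\infty\ge 2^{-\eta}$ for $k\neq k'$ and $g\ge a$, the centers $\bs b_k(t)$ are pairwise at $\ell^\infty$-distance $\ge a2^{-\eta}>\tfrac32 s2^{-\eta}$ because $2s<a$; hence the cubes $\ol{Q}(\bs b_k(t),\tfrac{s}{2^\eta})$ stay pairwise disjoint for all $t$, so $\mathscr{L}^d(\Omega(t))=2^{\eta d}(s2^{-\eta})^d=s^d$, and $D\subseteq[-\tfrac12,\tfrac12]^d$ since $|\bs c^\eta_k|_\infty\le\tfrac12(1-2^{-\eta})$, $g\le 1$ and $s<\tfrac12$. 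Moreover $\ol{\rho}_b(t,\cdot)\equiv\ol{\rho}^s$ for $t\le\tfrac13$ and $\equiv\ol{\rho}^e$ for $t\ge\tfrac23$, which is \eqref{eqn:rhose}, and $\{\ol{\rho}_b(t,\cdot)=1\}=\Omega(t)\subseteq D$ has measure $s^d$.

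Next I would build $\ol{\bs v}_b$ so that its flow performs exactly these translations. For each $k$ consider the concentric cube $R_k(t):=\ol{Q}(\bs b_k(t),\tfrac{3s}{2^{\eta+1}})$ of side $\tfrac32 s2^{-\eta}$; as above the $R_k(t)$ are pairwise disjoint and contained in $[-\tfrac12,\tfrac12]^d$ (here one uses $\tfrac34 s<\tfrac12$). On each $R_k(t)$ I place a divergence-free field equal to the constant $\dot{\bs b}_k(t)$ on the inner cube $\ol{Q}(\bs b_k(t),\tfrac{s}{2^\eta})$ and vanishing near $\partial R_k(t)$: fix unit vectors $\bs e_k\parallel\bs c^\eta_k$ (here $\bs c^\eta_k\neq\bs 0$ since $2^\eta$ is even) and $\bs f_k\perp\bs e_k$, take a cutoff $\phi_k(t,\bs x)=\psi_k(\bs x-\bs b_k(t))$ with $\phi_k\equiv 1$ on $\ol{Q}(\bs b_k(t),\tfrac{s}{2^\eta})$, $\supp\phi_k(t,\cdot)\subseteq R_k(t)$, and $\|\nabla^j_{(t,\bs x)}\phi_k\|_{L^\infty}\le C(d,j)(2^\eta/s)^j$, and set
\begin{equation*}
\ol{\bs v}_b(t,\bs x):=\sum_{k}|\bs c^\eta_k|\,\Big[(\partial_{\bs f_k}\Psi_k)\,\bs e_k-(\partial_{\bs e_k}\Psi_k)\,\bs f_k\Big],\qquad \Psi_k(t,\bs x):=\dot g(t)\,\bigl(\bs f_k\cdot(\bs x-\bs b_k(t))\bigr)\,\phi_k(t,\bs x).
\end{equation*}
Each summand is divergence-free by inspection (a stream-function field in the $\bs e_k$–$\bs f_k$ plane), equals $\dot g(t)|\bs c^\eta_k|\bs e_k=\dot{\bs b}_k(t)$ on the inner cube (where $\phi_k\equiv 1$, $\nabla\phi_k=0$), and vanishes off $R_k(t)$; the summands have disjoint supports and $\dot g$ is supported in $[\tfrac13,\tfrac23]$, so $\ol{\bs v}_b\in C^\infty_c([0,1]\times\mathbb{R}^d;\mathbb{R}^d)$ is supported in $[\tfrac13,\tfrac23]\times[-\tfrac12,\tfrac12]^d$; and at each $t$, $\supp\ol{\bs v}_b(t,\cdot)\subseteq\bigcup_k R_k(t)$ has measure $(\tfrac32)^d s^d\le 2^d s^d$, which is \eqref{eqn:supps}.

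It then remains to check \eqref{eqn:reg} and that $\ol{\rho}_b$ solves the continuity equation. On $R_k(t)$ one has $|\bs x-\bs b_k(t)|\lesssim_d s2^{-\eta}$, $|\bs c^\eta_k|\lesssim_d 1$, $|\dot g|,|\ddot g|,\dots\lesssim 1$, and each derivative — spatial or temporal — of $\phi_k$ or of $\bs x\mapsto \bs f_k\cdot(\bs x-\bs b_k(t))$ costs at most one extra factor $2^\eta/s$ (a time derivative because it moves the center $\bs b_k$ at bounded speed while the relevant length scale is $s2^{-\eta}$); differentiating the displayed formula, $i$ derivatives produce at most $(2^\eta/s)^i$, so $\|\nabla^i_{(t,\bs x)}\ol{\bs v}_b\|_{L^\infty}\le C(d,i)(2^\eta/s)^i$ and in particular $\|\ol{\bs v}_b\|_{L^\infty}\lesssim_d 1$. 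Finally, on each inner cube $\ol{\bs v}_b(t,\cdot)$ equals the constant $\dot{\bs b}_k(t)$, so a characteristic starting in $\ol{Q}(\bs b_k(0),\tfrac{s}{2^\eta})$ satisfies $\tfrac{d}{dt}(\bs x(t)-\bs b_k(t))=0$ as long as it stays in that translating cube, hence never leaves it; therefore the flow of $\ol{\bs v}_b$ maps $\Omega(0)$ onto $\Omega(t)$, and $\ol{\rho}_b=1_{\Omega(t)}$ is transported by $\ol{\bs v}_b$, i.e.\ it is a weak solution of the continuity equation. This completes the construction.

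The only genuinely load-bearing choice is moving all cubes simultaneously by a homothety of the grid rather than one at a time: the budget $\mathscr{L}^d(\supp\ol{\bs v}_b(t,\cdot))\le 2^d s^d$ forces the support around each cube to have side comparable to $s2^{-\eta}$, which is compatible with the derivative bound $(2^\eta/s)^i$ — i.e.\ with a transition layer of width $\gtrsim s2^{-\eta}$ — and with keeping the cubes pairwise disjoint and inside $[-\tfrac12,\tfrac12]^d$ throughout \emph{precisely} because $2s<a$ (at every time the rescaled grid has spacing $\ge a2^{-\eta}$ while each moving cube has side $s2^{-\eta}<\tfrac a2 2^{-\eta}$). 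Everything else is routine bookkeeping.
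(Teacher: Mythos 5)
Your construction is correct and follows essentially the same route as the paper's: move all $2^{\eta d}$ small cubes simultaneously along the homothety $t\mapsto g(t)\bs c_k^\eta$ (the paper writes the same path as $\bs A_t = a\bs c_k^\eta(1-\zeta(t))+\bs c_k^\eta\zeta(t)$), and realize each single-cube rigid translation via a cutoff stream-function field with transition layer of width $\sim s2^{-\eta}$, the supports staying pairwise disjoint and inside the unit cube exactly because $2s<a$. The paper packages the per-cube translator as a reusable model field $\wt{\bs v}(\cdot;\bs A_0,\bs A_1,\lambda)$ built from $\bs b_d$ and then sums $2^{\eta d}$ rescaled copies in a second step, but that is only a difference of presentation; the ansatz, disjointness argument, support and derivative estimates, and transport verification are the same as yours.
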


The vector field $\ol{\bs{v_b}}$ is composed of $2^{\eta d }$ vector fields, each sending a cube with center $a \bs{c}^\eta_k$ and size $s/2^{\eta}$ to the corresponding cube (centered at $\bs{c}_k^\eta$) in the final configuration. The proof of above propostion is divided into two steps. In Step 1 of the proof we build a smooth, divergence-free and compactly supported vector field that translates a cube from one place at $t = 0$ to another place at $t = 1$ without deforming the cube. This kind of vector field, with minor technical adjustments, was introduced in \cite{kumar2023nonuniqueness}.
In Step 2, we assemble $2^{\eta d }$ copies of such a vector field and obtain $\ol{\bs{v}}_b$ with the desired properties.

\noindent
{\bf Step 1. A vector field that rigidly translates a cube of length $\lambda$} Let $0< \lambda$, $\bs{A}_0, \bs{A}_1\in \R^d $ and let  $\zeta : [0,1] \to  [0,1]$ be a smooth function such that $\zeta(x) = 0$ if $x \leq \frac{1}{3}$ and $\zeta(x) = 1$ if $x \geq \frac{2}{3}$. We define

\begin{eqnarray}
    \bs{A}_t \coloneqq \bs{A}_0 \left[1 - \zeta\left(t\right)\right]  + \bs{A}_1 \zeta\left(t\right). 
\end{eqnarray}
Next, we show that there is a smooth, divergence-free vector field $\wt{\bs{v}}(\cdot \, ; \bs{A}_0, \bs{A}_1, \lambda) : \mathbb{R} \times \mathbb{R}^d \to \mathbb{R}^d$ with the following properties:
 \begin{align}
\wt{\bs{v}}(t, \bs{x}) = (\bs{A}_1 - \bs{A}_0) \zeta^\prime\left(t\right) \qquad \mbox{for all }\bs{x} \in \ol{Q}(\bs{A}_t, \lambda),  
 \end{align}
\begin{align}
    \label{eqn:supp}
     \supp\wt{\bs{v}} \subseteq \left\{(t, \bs{x}): t\in \left[\frac{1}{3}, \frac{2}{3}\right], \; \bs{x}\in\ol{Q}(\bs{A}_t, 2\lambda)\right\},
\end{align} 
and
\begin{align}
\label{eqn:regtildev}\norm{\nabla_{(t, \bs{x})}^i\wt{\bs{v}}}_{L^\infty_{t, \bs{x}}} \lesssim \frac{|\bs{A}_0 - \bs{A}_1|}{\lambda^i}  \quad \forall i=0,1, \dots .
\end{align}
Furthermore, there is a density field $\wt{\rho}(\cdot, \bs{A}_0, \bs{A}_1, \lambda): \mathbb{R}^d \to \mathbb{R}$ defined as
 \begin{align}
    \label{eqn:itsolves}
   \wt{\rho}(t,x ; \bs{A}_0, \bs{A}_1, \lambda) = 1_{Q(\bs{A}_t, \lambda)},
\end{align}
which solves the transport equation with vector field $\wt{\bs{v}}$.

The construction of $\wt{\bs{v}}$ and $\wt{\rho}$ proceeds as follows. Let $\bs{\xi} \in \R^d$ with $|\bs{\xi}|=1$ and let $\bs{\xi}_1,...,\bs{\xi}_{d-1} \in \R^d$ be vectors such that along with $\bs{\xi}$ they form  an orthonormal basis of $\R^d$. Let $\varphi \in C^\infty_c(\left[-\frac{5}{4},\frac{5}{4}\right])$ be a smooth cutoff function, which equals $1$ in $[-1,1]$.
We define a vector field $\bs{b}_2 : \mathbb{R}^2 \to \mathbb{R}^2$ in dimension $d = 2$ as 
$$\bs{b}_2(x_1,x_2;\bs{\xi}) =- \nabla^\perp [ \bs{\xi}^\perp  \cdot (x_1,x_2) \; \varphi(x_1) \varphi(x_2)]$$
where $\bs{\xi}^\perp = (\xi_2, -\xi_1)$ and $\nabla^\perp = (\partial_2, -\partial_1).$
In dimensions $d \geq 3$, we define the vector field $\bs{b}_d: \mathbb{R}^d \to \mathbb{R}^d$ as
$$\bs{b}_d(\bs{x};\bs{\xi}) = \bs{b}_2(\bs{x}\cdot \bs{\xi}, \bs{x} \cdot \bs{\xi}_1; \bs{\xi})  \varphi(\bs{x} \cdot \bs{\xi}_2) ...\varphi(\bs{x} \cdot \bs{\xi}_{d-1}).$$
The velocity field $\bs{b}_d$ for $d \geq 2$ is smooth, divergence-free, compactly supported in $Q(\bs{0}, 2)$ and have the following properties:
\begin{equation}
    \label{eqn:bincube}
\bs{b}_d(\bs{x};\bs{\xi}) = \bs{\xi}\qquad \mbox{for every }\bs{x} \in \ol{Q}(\bs{0}, 1),
\end{equation} 
$$\norm{\nabla^i \bs{b}_d}_{L^\infty} \lesssim1 \quad \forall \; i =0,1, \dots .$$


Now we rescale and translate this vector field in a time dependent fashion and define a vector field $\wt{\bs{v}}: \mathbb{R}^d \to \mathbb{R}^d$ as
\begin{eqnarray}
    \wt{\bs{v}}(t, \bs{x}; \bs{A}_0, \bs{A}_1, \lambda) \coloneqq |\bs{A}_t^\prime| \bs{b}_d\left(\frac{\bs{x} - \bs{A}_t}{\lambda} ; \frac{\bs{A}_1 - \bs{A}_0}{|\bs{A}_1 - \bs{A}_0|} \right).
    \label{eqn: v tilde}
\end{eqnarray}
Notice that, for any $\bs{x} \in \ol{Q}(\bs{A}_0, \lambda)$, the curve $\bs{x} - \bs{A}_0 + \bs{A}_t$ is an integral curve of the velocity field \eqref{eqn: v tilde}. Therefore, by the representation of solutions of the transport equation with integral curves, this implies \eqref{eqn:itsolves} solves
\begin{align}
\partial_t \wt{\rho} + \wt{\bs{v}} \cdot \nabla \wt{\rho} = 0.
\label{eqn: rho tilde solves continuity}
\end{align}



\begin{figure}[h]
\centering
 \includegraphics[scale = 0.3]{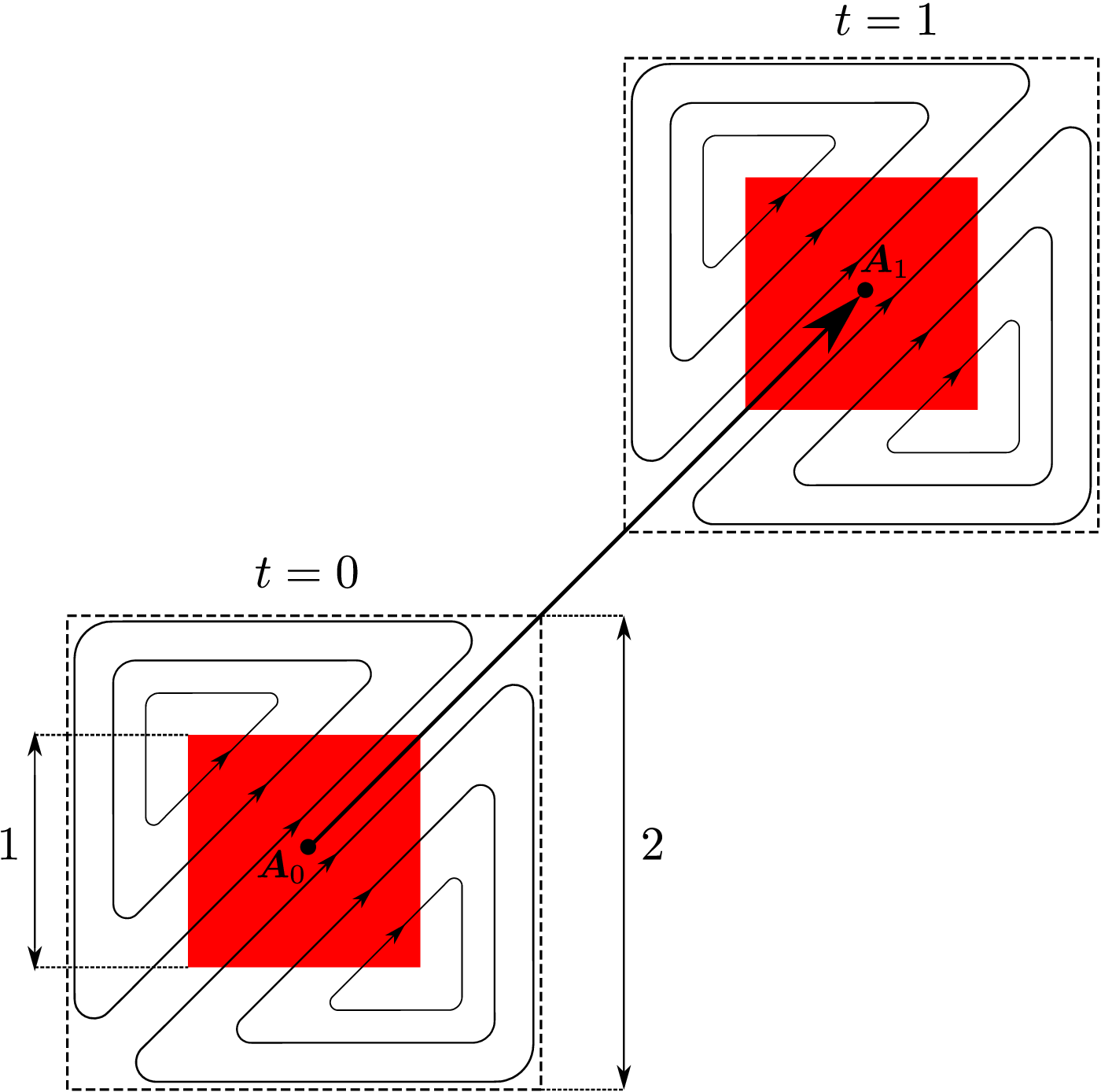}
 \caption{depicts the building block vector field which rigidly translates a cube of size $1$ centered at $\bs{A}_0$ at time $t = 0$ to a cube centered at $\bs{A}_1$ at time $t=1$. The building block vector field is divergence free and it is uniform in the support of the cube as it moves through space. Moreover, the vector field stays supported in double the size of cube it translates.}
 \label{fig: blob flow}
\end{figure}

\noindent
{\bf Step 2. Assembling several copies of $\wt{\bs{v}}$ and $\wt{\rho}$}
Given $k \in \{1, \dots 2^{\eta d}\}$, we first define a vector field $\ol{\bs{v}}_k$ and a density field $\ol{\rho}_k$ as
\begin{subequations}
\begin{align}
\ol{\bs{v}}_k(t, \bs{x}; \eta, a, s) \coloneqq \wt{\bs{v}}\left(t, \bs{x}; a\bs{c}_k^\eta, \bs{c}_k^\eta, \frac{s}{2^\eta}\right) \\
\ol{\rho}_k (t, \bs{x}; \eta, a, s) \coloneqq \wt{\rho}\left(t, \bs{x}; a\bs{c}_k^\eta, \bs{c}_k^\eta, \frac{s}{2^\eta}\right).
\end{align}
\end{subequations}
From \eqref{eqn:supp}, we know that 
for $t \in [0, 1]$
\begin{subequations}
\begin{align}\label{eqn:boh}
& \supp_{\bs{x}} \tilde \rho_k \subseteq\supp_{\bs{x}} \ol{\bs{v}}_k(t, \cdot) \subseteq \ol{Q}\left(\bs{x}_p^k(t), \frac{2s}{2^\eta}\right), 
\end{align}
\end{subequations}
where 
\begin{subequations}
\begin{align}
& \bs{x}^k_p(t) =  (1 - \zeta(t))a \bs{c}_k^\eta +  \zeta(t) \bs{c}_k^\eta
. 
\end{align}
\end{subequations}
We next observe that the cubes in \eqref{eqn:boh} are disjoint for different $k$. Indeed for $k \neq k^\prime$, we have 
\begin{align}
|\bs{x}^k_p(t) - \bs{x}^{k^\prime}_p(t)| \geq a|\bs{c}_k^\eta - \bs{c}_{k^\prime}^\eta| \geq \frac{a}{2^\eta} >\frac{2s}{2^\eta}.
\end{align}
Hence, the building block vector field $\ol{\bs{v}}_b$ and density field $\ol{\rho}_b$ defined as
\begin{subequations}
\begin{align}
\ol{\bs{v}}_b \coloneqq \sum_{k = 1}^{2^{\eta d}} \ol{\bs{v}}_k, \nonumber \\
\ol{\rho}_b \coloneqq \sum_{k = 1}^{2^{\eta d}} \ol{\rho}_k
\end{align}
\end{subequations}
are composed of addends with disjoint support contained in the union of $2^{\eta d}$ cubes of size $2s/2^\eta$, hence obtaining \eqref{eqn:supps}. Moreover $\bar \rho_b$ solves the continuity equation in view of \eqref{eqn: rho tilde solves continuity} with prescribed initial and final conditions given in \eqref{eqn:rhose} and the velocity field $\ol{\bs{v}}_b$ satisfies the regularity estimates 
\eqref{eqn:reg}, which can be seen from \eqref{eqn:regtildev}.







\bibliographystyle{halpha-abbrv}
\bibliography{references.bib}

\end{document}